\title[Global existence for the Navier-Stokes equations in the half-space]{Global weak solutions of the Navier-Stokes equations for intermittent initial data in half-space}
\date{\today}
\author[Z.~Bradshaw]{Zachary Bradshaw}
\address{Department of Mathematical Sciences, University of Arkansas, Fayetteville, AR 72701}
\email{zb002@uark.edu}
\author[I.~Kukavica]{Igor Kukavica}
\address{Department of Mathematics, University of Southern California, Los Angeles, CA 90089}
\email{kukavica@usc.edu}
\author[W.~S.~O\.za\'nski]{Wojciech S. O\.za\'nski}
\address{Department of Mathematics, University of Southern California, Los Angeles, CA 90089}
\email{ozanski@usc.edu}
  \chardef\forshowkeys=0
  \chardef\refcheck=0
  \chardef\showllabel=0
  \chardef\sketches=0
\DeclareMathOperator*{\esssup}{ess\,sup}
\newcommand{\EQ}[1]{\begin{equation}\begin{split} #1 \end{split}\end{equation}}
\begin{document}

\ifnum\showllabel=1
 \def\llabel#1{\marginnote{\color{lightgray}\rm\small(#1)}[-0.0cm]\notag}
\else
 \def\llabel#1{\notag}
\fi

\newcommand{\norm}[1]{\left\|#1\right\|}
\newcommand{\nnorm}[1]{\lVert #1\rVert}
\newcommand{\abs}[1]{\left|#1\right|}
\newcommand{\NORM}[1]{|\!|\!| #1|\!|\!|}

\newtheorem{Theorem}{Theorem}[section]
\newtheorem{Corollary}[Theorem]{Corollary}
\newtheorem{Definition}[Theorem]{Definition}
\newtheorem{Proposition}[Theorem]{Proposition}
\newtheorem{Lemma}[Theorem]{Lemma}
\newtheorem{Remark}[Theorem]{Remark}

\def\theequation{\thesection.\arabic{equation}}
\numberwithin{equation}{section}

\definecolor{myblue}{rgb}{.8, .8, 1}

\newlength\mytemplen
\newsavebox\mytempbox

\makeatletter
\newcommand\mybluebox{%
    \@ifnextchar[
       {\@mybluebox}%
       {\@mybluebox[0pt]}}

\def\@mybluebox[#1]{%
    \@ifnextchar[
       {\@@mybluebox[#1]}%
       {\@@mybluebox[#1][0pt]}}

\def\@@mybluebox[#1][#2]#3{
    \sbox\mytempbox{#3}%
    \mytemplen\ht\mytempbox
    \advance\mytemplen #1\relax
    \ht\mytempbox\mytemplen
    \mytemplen\dp\mytempbox
    \advance\mytemplen #2\relax
    \dp\mytempbox\mytemplen
    \colorbox{myblue}{\hspace{1em}\usebox{\mytempbox}\hspace{1em}}}

\makeatother

\def\tt{{\tilde t}}
\newcommand*{\re}{\ensuremath{\mathrm{{\mathbb R}e\,}}}
\newcommand*{\im}{\ensuremath{\mathrm{{\mathbb I}m\,}}}
\newcommand{\un}{\underline{n}}
\newcommand{\on}{\overline{n}}

\def\Helm{\mathrm{H}}
\def\nonloc{\mathrm{ nonloc}}
\def\harm{\mathrm{harm}}
\def\li{\mathrm{li}}
\def\HH{{\mathbb R}^{3}_{+}}
\def\biglinem{\vskip0.5truecm\par==========================\par\vskip0.5truecm}
\def\inon#1{\hbox{\ \ \ \ \ }\hbox{#1}}                
\def\and{\text{\indeq and\indeq}}
\def\indeqtimes{\quad{}\quad{}{\quad{}\quad{}\quad{}\quad{}\quad{}\quad{}\times}}
\def\onon#1{\text{~~on~$#1$}}
\def\inin#1{\text{~~in~$#1$}}
\def\startnewsection#1#2{ \section{#1}\label{#2}\setcounter{equation}{0}}   
\def\Tr{\mathop{\rm Tr}\nolimits}    
\def\div{\mathop{\rm div}\nolimits}
\def\curl{\mathop{\rm curl}\nolimits}
\def\dist{\mathop{\rm dist}\nolimits}  
\def\supp{\mathop{\rm supp}\nolimits}
\def\indeq{\quad{}}           

\def\colr{\color{red}}
\def\colb{\color{black}}
\def\coly{\color{lightgray}}
\definecolor{colorgggg}{rgb}{0.1,0.5,0.3}
\definecolor{colorllll}{rgb}{0.0,0.7,0.0}
\definecolor{mypurple}{rgb}{0.8,0.0,0.2}
\definecolor{colorhhhh}{rgb}{0.3,0.75,0.4}
\definecolor{colorpppp}{rgb}{0.7,0.0,0.2}
\definecolor{coloroooo}{rgb}{0.5,0.0,0.0}
\definecolor{colorqqqq}{rgb}{0.1,0.7,0}
\def\colg{\color{colorgggg}}
\def\collg{\color{colorllll}}
\def\coleo{\color{colorpppp}}
\def\cole{\color{black}}
\def\colu{\color{blue}}
\def\colw{\color{coloraaaa}}

\def\comma{ {\rm ,\qquad{}} }            
\def\commaone{ {\rm ,\quad{}} }          
\def\les{\lesssim}
\def\nts#1{\hbox{\color{red}\hbox{\bf ~#1~}}} 
\def\ntsf#1{\footnote{\color{colorgggg}\hbox{#1}}} 
\def\ntsfred#1{\footnote{\color{red}\hbox{#1}}} 
\def\blackdot{{\color{red}{\hskip-.0truecm\rule[-1mm]{4mm}{4mm}\hskip.2truecm}}\hskip-.3truecm}
\def\bluedot{{\color{blue}{\hskip-.0truecm\rule[-1mm]{4mm}{4mm}\hskip.2truecm}}\hskip-.3truecm}
\def\purpledot{{\color{colorpppp}{\hskip-.0truecm\rule[-1mm]{4mm}{4mm}\hskip.2truecm}}\hskip-.3truecm}
\def\greendot{{\color{colorgggg}{\hskip-.0truecm\rule[-1mm]{4mm}{4mm}\hskip.2truecm}}\hskip-.3truecm}
\def\cyandot{{\color{cyan}{\hskip-.0truecm\rule[-1mm]{4mm}{4mm}\hskip.2truecm}}\hskip-.3truecm}
\def\reddot{{\color{red}{\hskip-.0truecm\rule[-1mm]{4mm}{4mm}\hskip.2truecm}}\hskip-.3truecm}

\def\gdot{\greendot}
\def\bdot{\bluedot}
\def\ydot{\cyandot}
\def\rdot{\cyandot}

\def\fractext#1#2{{#1}/{#2}}
\def\fract#1#2{\frac{#1}{#2}}

\def\wojtek#1{\textcolor{blue}{#1}}
\def\igor#1{\textcolor{cyan}{\text{\bf #1}}}
\def\zach#1{\text{{\textcolor{colorqqqq}{#1}}}}
\def\igorf#1{\footnote{\text{{\textcolor{cyan}{\text{#1}}}}}}

\newcommand{\I}{\infty}
\newcommand{\AAA}{\mathrm{A}}
\newcommand{\BB}{\mathrm{B}}
\newcommand{\loc}{\mathrm{loc}} 
\newcommand{\uloc}{\mathrm{uloc}}
\newcommand{\MM}{M^{2,2}_{\mathcal C}}
\newcommand{\MMn}{M^{2,2}_{\mathcal C_n}}
\newcommand{\MMnq}{M^{2,q}_{\mathcal C_n}}
\newcommand{\MC}{\mathring M^{2,2}_{\mathcal C}}
\newcommand{\MCn}{\mathring M^{2,2}_{\mathcal C_n}}
\newcommand{\NN}{{\mathbb N}}

\newcommand{\eqnb}{\begin{equation}}
\newcommand{\eqne}{\end{equation}}
\newcommand\De{\Delta}
\newcommand\Om{\Omega}
\newcommand{\ga}{\gamma}
\newcommand{\la}{\lambda}
\newcommand\Ga{\Gamma}
\newcommand{\RR}{\mathbb{R}}
\newcommand{\CC}{\mathbb{C}}
\newcommand{\lec}{\lesssim}
\newcommand{\gec}{\gtrsim}
\newcommand{\na}{\nabla}
\newcommand{\cn}{{\mathcal{C}_n}}
\renewcommand{\d}{\,d}
\newcommand{\p}{\partial }
\newcommand{\ee}{{e}}

\begin{abstract} 
We prove existence of global-in-time weak solutions of the incompressible Navier-Stokes equations in the half-space $\RR^3_+$ with initial data in a weighted space that allow non-uniformly locally square integrable functions that grow at spatial infinity in an intermittent sense. The space for initial data is built on cubes whose sides $R$ are proportional to the distance to the origin and the square integral of the data is allowed to grow as a power of $R$.
 The existence is obtained via a new a priori estimate and stability result in the weighted space, as well as new pressure estimates. Also, we prove eventual regularity of such weak solutions, up to the boundary, for $(x,t)$ satisfying $t>c_1|x|^2 + c_2$, where $c_1,c_2>0$, for a large class of initial data $u_0$, with $c_1$ arbitrarily small. As an application of the existence theorem, we  construct global discretely self-similar solutions, thus extending the theory on the half-space to the same generality as the whole space. 

\end{abstract}

\subjclass[2000]{35Q30, 35Q35, 76D05}
\keywords{Navier-Stokes equations, weak solutions, global existence, self-similar solutions, eventual regularity}
\maketitle

\setcounter{tocdepth}{2} 
\tableofcontents

\startnewsection{Introduction}{sec01}

We consider solutions to the three-dimensional incompressible Navier-Stokes equations (NSE)
  \begin{align}
   \begin{split}
   &\partial_t u-\Delta u +u\cdot\nabla u+\nabla p = 0,
   \\& 
   \nabla \cdot u=0,
   \end{split}
   \label{EQ01}
  \end{align}
posed on $\HH=\{(x_1,x_2,x_3):x_3>0\}$ satisfying homogeneous Dirichlet boundary condition on $\partial \HH\times (0,\I)$ and the initial condition
  \begin{equation}
   u(x,0)=u_0(x),
   \llabel{EQ02}
  \end{equation}
where $u_0 \in L^2_{\loc } (\overline{\HH})$ is divergence-free with $u_3=0$ on $\partial\HH$. If $u_0\in L^2$ then the existence of global-in-time weak solutions satisfying the strong energy inequality has been shown in the fundamental works of Leray \cite{leray} and Hopf \cite{hopf} (cf.~also \cite{CF,T,RRS,OP}), and are commonly referred to as \emph{Leray-Hopf weak solutions}. Such solutions enjoy additional structure, such as weak-strong uniqueness. One can also construct Leray-Hopf weak solutions that are  suitable (cf.~\cite{CKN,BIC}), that is that satisfy the local energy inequality (see \eqref{EQ155} below). This provides additional interior regularity, as shown in the celebrated work of Caffarelli, Kohn, and Nirenberg \cite{CKN} (see also \cite{RRS,O19}).  However, the questions of uniqueness and smoothness of such solutions remain open.

Recently, in \cite{LR}, the Leray theory was extended to uniformly locally square integrable data
in ${\mathbb R}^{3}$; cf.~also \cite{KiSe,KwTs} for some extensions. For global existence, these works assume some type of decay of the initial data as $|x|\to \infty$, either pointwise decay of a locally determined quantity, the decay of the $L^2$ norm confined to balls of unit radius, or the decay the oscillation computed over balls of unit radius. In \cite{LR-Morrey,BT8,BKT,FL,KwTs}, existence results are given in weighted spaces, which allow for lack of decay in some directions. The papers \cite{BKT,FL} additionally allow for growth in some directions.

The previously mentioned papers only concern $\RR^{3}$. On $\HH$, the question  of global existence is much more challenging and was only recently addressed for decaying uniformly locally  by Maekawa, Miura, and Prange \cite{MMP1}.  
Our goal is to establish the global existence of weak solutions in $\HH$
with data built on a dyadic tiling $\mathcal C$ of the half-space (see Figure 1 for an illustration projected on a half-plane), allowing
for growth in all directions. These data are on one hand general 
and on the other also well adapted for the study of self-similar solutions.

 \begin{figure}[h]
\centering
 \includegraphics[width=0.6\textwidth]{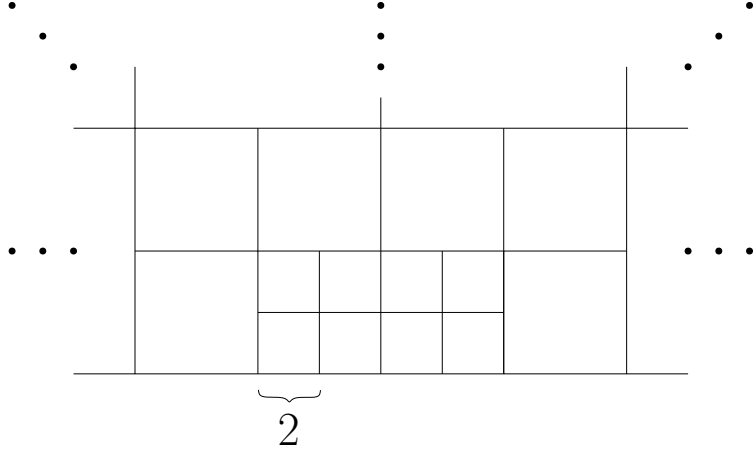}
 \nopagebreak
\captionsetup{width=0.8\textwidth}
  \captionof{figure}{The cover of $\HH$ by the collection $\mathcal{C}$. A scaled cover $\mathcal{C}_n$ is obtained by replacing $2$ with $2^n$. }\label{fig_cubes} 
\end{figure}

One of the main difficulties in the results on $\RR^3$ is the treatment of the pressure, which needs to be decomposed into the \emph{near field} and the \emph{far field}. Namely, given an open set $Q\subset \RR^3$ one can consider
\[\begin{split}
p_{\mathrm{near}} (x,t) +p_{\mathrm{far}} (x,t) - p_Q (t)&= -\frac13 | u(x,t) |^2 + \mathrm{p.v.}\int_{Q^*} K_{ij} (x-y) (u_i (y,t) u_j (y,t) ) \d y \\
&\hspace{2cm} + \int_{y\not \in Q^*} (K_{ij} (x-y)- K_{ij} (x_Q -y)) (u_i (y,t) u_j (y,t) ) \d y ,
\end{split}
\]
as in \cite{BK,BKT}, where $p_Q(t)$ is an arbitrary function of time, $K_{ij}(y)= \p_{ij} (4\pi |y|)^{-1}$, and $x_Q\in Q$ is fixed. In this context, the case of the half-space $\HH$ becomes much more difficult, as no such direct decomposition is available. In fact, except for the Helmholtz pressure, that is a solution of the nonhomogeneous Poisson equation 
\[
\begin{cases}
-\Delta p_{\Helm} &= \p_{ij} (u_i u_j ) \quad \text{ in } \HH,\\
\p_3 p_{\Helm} &= \mathrm{div}\, (u\,u_3) \quad \hspace{0.1cm}\text{ on } \p \HH,
\end{cases}
\]
one also needs to take into account the harmonic pressure, which is a solution of the Laplace equation with Neumann boundary condition,
\[
\begin{cases}
-\Delta p_{\harm} &=0 \hspace{1.8cm} \text{ in } \HH ,\\
\p_3 p_{\harm} &= \left. \Delta u_3  \right|_{x_3=0} \quad \text{ on } \p \HH,
\end{cases}
\]
where the boundary condition at $x_3=0$ should be understood in the sense of the trace of $\Delta u_3$. This part of the pressure function is absent in the case of the whole space $\RR^3$, and the Helmholtz pressure $p_{\Helm}$ admits much more sophisticated analysis than in the case of $\RR^3$. Both parts of the pressure become even more complicated when studying the corresponding near and far fields. In fact, only recently Maekawa, Miura, and Prange \cite{MMP1,MMP2} proved an analogue of the Lemari\'e-Rieusset theory in the half-space $\HH$, using explicit representation of the kernel for the Stokes equations in $\HH$, derived by Desch, Hieber, and Pr\"uss \cite{DHP}.

In this paper, we establish the global-in-time existence of weak solutions for intermittent initial data. To state our main results, we first define local energy weak solutions.
\begin{Definition}[Local energy solutions]\label{D02}
A
vector field $u\in L^2_{\loc}(\overline{\HH}\times [0,T))$, where $0<T<\infty$,
is a local energy solution to \eqref{EQ01} with divergence-free initial data $u_0\in L^2_{\loc}(\overline{\HH})$ if the following conditions hold:
\begin{enumerate}
\item $u\in \bigcap_{R>0} L^{\I}(0,T;L^2(B_R(0)\cap \HH ))$, $\nabla u \in L^2_{\loc}(\overline{\HH}\times [0,T])$ and $u|_{x_3=0}=0$,
\item for some $p\in \mathcal D'(\HH\times (0,T) )$, the pair $(u,p)$ is a distributional solution to \eqref{EQ01},
\item for all compact subsets $K$ of $\HH$ we have $u(t)\to u_0$ in $L^2(K)$ as $t\to 0_+$,
\item $u$ is suitable in the sense of Caffarelli-Kohn-Nirenberg, i.e., 
for all non-negative $\phi\in C_c^\infty (\overline{\HH}\times (0,T))$,
we have  the local energy inequality
\EQ{\label{EQ155}
&
2\iint |\nabla u|^2\phi\,dx\,dt 
\leq 
\iint |u|^2(\partial_t \phi + \Delta\phi )\,dx\,dt +\iint (|u|^2+2p)(u\cdot \nabla\phi)\,dx\,dt,
}
\item the function $t\mapsto \int u(x,t)\cdot w(x)\,dx$ is continuous on $[0,T)$ for any compactly supported $w\in L^2(\HH)$,
\item given a bounded, open set $\Om \subset \HH$, the pressure satisfies the \emph{local pressure expansion}, 
  \begin{equation}\begin{split}
   p = p_{\li,\loc} + p_{\li,\nonloc} + p_{\loc,\Helm}+ p_{\loc,\harm}+ p_{\nonloc,\Helm}+ p_{\harm, \leq 1}+ p_{\harm, \geq 1},
  \end{split}   
  \label{EQ07}
  \end{equation}
  up to a function of time, where the terms on the right-hand side are defined in \eqref{EQ20}, \eqref{EQ26}, \eqref{EQ32}, \eqref{EQ34}, and \eqref{EQ35} below, and estimated in \eqref{EQ37}.
\end{enumerate}
We say that $u$ is a local energy solution on $\HH\times [0,\I)$ if it is a local energy solution on $\HH\times [0,T)$ for all $T<\I$. 
\end{Definition}
This definition is primarily based on the one in \cite{KiSe}. Some works refer to this class (without part (6) and with minor modifications) as \textit{local Leray solutions} or \textit{Lemari\'e-Rieuesset type solutions}. This definition contains sufficient properties for work on regularity, see e.g.~\cite{CKN,L98,LS99,ESS,K,grujic,BS} among others, or in physical applications, see e.g.~\cite{DaGr}. We note that we do not assert any uniform control in $L^2_{\loc} $. The local pressure expansion in (6) above is inspired by the decomposition introduced by \cite{MMP1,MMP2}, and is unique up to a function of time; cf.~\eqref{EQ36}. 

Our main result is concerned with local energy weak solutions with initial $u_0$ belonging to a weighted space that allows growth of the kinetic energy at spatial infinity. 

To be precise, given $n\in{\mathbb N} $, we denote by $S_n$ the collection of 32 cubes of side-length $2^n$ that can be obtained by partitioning $\{x\in \HH: |x_i|\leq 2^{n+1} \text{~for~}i=1,2,3\}$. We let $R_k= \{x\in \HH: |x_i|< 2^k;i=1,2,3\}$ and we denote by $S_k$, for $k>n$, the collection of 28 cubes of side-length $2^k$ that can be obtained by partitioning $ \overline{R_{k+1}\setminus R_k}$. Also, set
  \eqnb\label{EQ_C}
   \mathcal{C}_n = \bigcup_{k\geq n} \bigcup_{Q\in S_k} Q
   ;
  \eqne
this is illustrated by Fig.~\ref{fig_cubes} with $2$ replaced by $2^n$. In other words, $S_k$ is the collection of cubes from $\mathcal{C}_n$ of side-length $2^{k}$. We set $\mathcal{C}= \mathcal{C}_1$.

\begin{Definition}\label{D01}
Given $p\in[1,\infty)$, $q\ge0$, and $n\geq 1$, we have $f\in M^{p,q}_{\mathcal{C}_n}$ if  
    \begin{equation}
     \|f\|_{M^{p,q}_{\mathcal{C}_n}}^p =\sup_{Q\in \mathcal{C}_n} \frac 1 {|Q|^{\fract{q}{3}}}\int_{Q} |f(x)|^p\,\d x<\infty.
    \llabel{EQ03}    
  \end{equation}
We denote by $\mathring M^{p,q}_{\mathcal C}$ the closure 
in $M^{p,q}_{\mathcal C}$ of divergence-free, smooth functions, which are compactly supported in $\HH$.
\end{Definition} 
We note that 
 \begin{equation}
   \frac 1{|Q|^{\fract{q}{3}}} \int_Q |f|^p \,dx \to 0 \text{ as } |Q|\to \infty, Q\in \mathcal C,
    \label{EQ06}
  \end{equation}
for $f\in \mathring M^{p,q}_{\mathcal C}$, which was shown in \cite{BKT}. 
 
In the context of the whole space $\RR^3$, the spaces $M^{p,q}_{\mathcal{C}}$ are discussed in detail in \cite{BKT} and from the perspective of interpolation theory in \cite{FL-pressure}. The same observations apply here; in particular, the choice of tiling does not matter so long as elements have length scales comparable to their distance from the origin and dyadic cubes can be replaced by balls centered at the origin as in \cite{Basson}. These spaces can be characterized as inhomogeneous Herz spaces \cite{Tsu}. They were introduced to the study of fluids by Basson \cite{Basson} to establish local existence for the Navier-Stokes equations in two dimensions. In comparison to \cite{BKT}, here we build the divergence free condition and the decay condition into the space $\MC$ while in \cite{BKT} we only built the decay condition into this space. 

Our main theorem is concerned with the global-in-time existence with initial data in $\MC$.
\cole
\begin{Theorem}[Global existence of local energy solutions]\label{T01}
Given $u_0\in  \MC$, there exists a local energy solution
$u$ on $\HH\times (0,\I)$  with the initial data $u_0$.
\end{Theorem}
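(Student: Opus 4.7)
The plan is to obtain $u$ as a limit of smoother approximations. Since $u_0\in\MC$, the definition of $\MC$ furnishes a sequence $\{u_0^{(k)}\}$ of smooth, divergence-free vector fields compactly supported in $\HH$ with $u_0^{(k)}\to u_0$ in $M^{2,2}_{\mathcal{C}}$. Each $u_0^{(k)}\in L^2(\HH)$ vanishes on $\partial\HH$, so the classical Leray-Hopf construction on the half-space (in the suitable, Caffarelli-Kohn-Nirenberg form) produces global weak solutions $u^{(k)}$ on $\HH\times(0,\infty)$ with initial data $u_0^{(k)}$, homogeneous Dirichlet boundary data, and the local energy inequality \eqref{EQ155}.

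The central step is to promote these to uniform bounds in the weighted framework. Here one invokes the a priori estimate in $M^{2,2}_{\mathcal{C}}$ established earlier in the paper: for any $T>0$ it controls $\sup_{t\in(0,T)}\|u^{(k)}(t)\|_{M^{2,2}_{\mathcal{C}}}$ together with a weighted local space-time gradient norm in terms of $\|u_0\|_{M^{2,2}_{\mathcal{C}}}$ alone, and the pressure decomposition \eqref{EQ07} provides uniform bounds on each of the seven pieces through the estimate \eqref{EQ37} referenced in item (6) of Definition~\ref{D02}. In particular, on every bounded set $B_R(0)\cap\HH$ one obtains uniform bounds on $u^{(k)}$ in $L^\infty(0,T;L^2)\cap L^2(0,T;H^1)$. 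Combining this spatial regularity with a bound on $\partial_t u^{(k)}$ read from the equation (the nonlinear and pressure terms being controlled by the same estimates), an Aubin-Lions argument and a diagonal extraction in $R,T$ deliver a subsequence $u^{(k_j)}\to u$ strongly in $L^2_{\loc}(\overline{\HH}\times[0,\infty))$, weakly in the natural gradient space, and with each pressure piece converging distributionally.

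It remains to verify that the limit $u$ satisfies all requirements of Definition~\ref{D02}. Each of the seven pressure pieces in \eqref{EQ07} is a linear or bilinear expression in $u^{(k_j)}$ involving an explicit kernel, so the stability result in the weighted space (also proved earlier in the paper) passes the decomposition to $u$ itself; this verifies (1), (2), and (6). The local energy inequality (4) passes to the limit via lower semicontinuity of $\|\nabla u\|_{L^2(Q)}^2$, strong $L^3_{\loc}$ convergence (sufficient to handle the cubic term $|u|^2 u\cdot\nabla\phi$), and the pressure convergence just secured. Conditions (3) and (5) follow from the integral form of the equation, the uniform bounds, and the stability result applied as $t\downarrow 0$.

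The main obstacle is the pressure step. Unlike $\RR^3$, no single Riesz-type decomposition is available: one must match the Maekawa-Miura-Prange/Desch-Hieber-Pr\"uss representation for the Stokes system with the dyadic tiling $\mathcal{C}$, split both $p_{\Helm}$ and $p_{\harm}$ into local and nonlocal parts, and show that each piece absorbs the intermittent growth allowed by $\|u_0\|_{\MC}$ via the precise power $|Q|^{2/3}$ built into the norm. In particular one has to verify that no far-field boundary contribution is lost in the limit and that the decomposition remains consistent across cubes, so that the limiting $p$ is well-defined up to a function of time as required by \eqref{EQ07}; this is where the new pressure estimates announced in the introduction are indispensable.
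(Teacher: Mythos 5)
Your proposal follows the same strategy as the paper: approximate $u_0$ by smooth, compactly supported, divergence-free fields $u_0^{(k)}\to u_0$ in $M^{2,2}_{\mathcal C}$, invoke the a~priori estimate of Theorem~\ref{T02}, and then pass to the limit via the stability result of Theorem~\ref{T03}. The one technical point your write-up glosses over is that the approximating solutions must themselves be local energy solutions in the sense of Definition~\ref{D02}, in particular satisfying the local pressure expansion~\eqref{EQ07}; for this the paper does not use a generic suitable Leray--Hopf/CKN construction but takes the $u^{(k)}$ from the Maekawa--Miura--Prange theory and cites \cite[Proposition 3.1]{MMP1} to guarantee the pressure decomposition holds for the approximants, which is precisely what makes Theorem~\ref{T03} applicable.
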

\colb 
In comparison with the $L^2_{\uloc}(\HH)$ setting of \cite{MMP1,MMP2}, we note that neither of the two spaces $L^2_{\uloc}(\HH)$ and $\MC$ is contained in the other. For example, if $u_0$ is a constant (or periodic) function, then $u_0 \in  L^2_{\uloc}(\HH) \setminus \MC$, while
\[
 \sum_{k\geq 1} 2^{qk/2} \chi_{B_1 (2^k e_3) } \in M^{2,q}_{\mathcal{C}} \setminus L^2_{\uloc } (\HH),
\] 
cf.~\cite{BK}. In this context, Theorem~\ref{T01} is the first result asserting global-in-time existence of weak solutions in the half-space that allows intermittent initial data. 

The proof of Theorem~\ref{T01} is based on new a~priori estimates and a stability result. To be more precise, setting
  \begin{equation}
  \alpha_n (t) =\sup_{s\in[0,t]}\| u (s) \|_{M_{\mathcal{C}_n}^{2,q}}^2
  \quad \text{ and } \quad 
  \beta_n(t) = \sup_{Q\in \mathcal{C}_n} \frac{1}{|Q|^{\fract{q}{3}}} \int_0^t \int_{Q} |\na u |^2
   \label{EQ08}  
  \end{equation}
  for $n\in {\mathbb N}$ and $q\in (0,2] $, we have the following a~priori estimate.
\cole
\begin{Theorem}[A priori bound]\label{T02}
Assume that $1\leq q\leq 2$. There exists $\gamma_0\geq1$ with the following property. 
Let $n\in{\mathbb N}$ and $u_0\in \mathring M^{2,q}_{\mathcal C_n}$, 
and suppose that $(u,p)$ is a local energy solution on $\HH\times (0,\infty)$ with the initial data $u_0$ such that    
  \EQ{
   \esssup_{0<s<t}(\alpha_n(s)+\beta_n(s))<\infty,
   \llabel{EQ09}
  }
for all $t<\infty$. Then there exists a constant $C\geq 1$ such that 
  \EQ{\label{EQ11}
  \alpha_n(T_n)  + \beta_n(T_n)
      \leq 
       C \alpha(0)
  }
for some $T_n >0 $, such that 
  \begin{equation}
   T_n
   \geq
   \min\left\lbrace
          2^{n},
            \| u_0 \|_{M^{2,q}_{\cn}}^{-1/\gamma_0}
       \right\rbrace 
   .
   \label{EQ10}
  \end{equation}
\end{Theorem}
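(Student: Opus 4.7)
The plan is to substitute a carefully chosen cutoff into the local energy inequality \eqref{EQ155}, integrate over a single cube $Q\in\mathcal{C}_n$, take the supremum over such $Q$ after dividing by $|Q|^{q/3}$, and then combine the resulting bound with the pressure decomposition \eqref{EQ07} and its estimates \eqref{EQ37}. This produces a closed integral inequality for $\alpha_n(t)+\beta_n(t)$, which is closed by a continuity/bootstrap argument to extract the lower bound \eqref{EQ10} on $T_n$.

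To set up the localized energy estimate, fix $Q\in S_k$ with $k\geq n$ (so $|Q|^{1/3}=2^k$) and choose a smooth non-negative cutoff $\phi$, supported in a parabolic neighborhood of $Q\times[0,T]$, equal to $1$ on $Q\times[0,T]$, and satisfying $|\nabla\phi|\lesssim 2^{-k}$ together with $|\partial_t\phi+\Delta\phi|\lesssim 2^{-2k}$. Inserting $\phi$ into \eqref{EQ155} and dividing by $|Q|^{q/3}=2^{kq}$, the linear term contributes at most $C(t\,2^{-2k})\alpha_n(t)\leq C(t\,2^{-2n})\alpha_n(t)$. The cubic transport term $|Q|^{-q/3}\iint|u|^2\,u\cdot\nabla\phi$ is estimated by H\"older in space followed by the Gagliardo-Nirenberg interpolation $\|u\|_{L^3(2Q)}\lesssim\|u\|_{L^2(2Q)}^{1/2}(\|\nabla u\|_{L^2(2Q)}+2^{-k}\|u\|_{L^2(2Q)})^{1/2}$, producing after H\"older in time a bound of the schematic form $t^{1/4}\alpha_n(t)^{3/4}\beta_n(t)^{3/4}$.

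The pressure term $|Q|^{-q/3}\iint 2p\,u\cdot\nabla\phi$ is handled via the decomposition \eqref{EQ07}. The ``local'' pieces $p_{\li,\loc}$, $p_{\loc,\Helm}$, and $p_{\loc,\harm}$ yield Calder\'on-Zygmund type bounds of the same form as the cubic term above. The non-local pieces $p_{\li,\nonloc}$ and $p_{\nonloc,\Helm}$, together with the harmonic pieces $p_{\harm,\leq 1}$ and $p_{\harm,\geq 1}$, are sums of contributions coming from cubes $Q'\in\mathcal{C}_n$ of all sizes $2^{k'}$, each weighted by an inverse power of the distance between $Q'$ and $Q$. The polynomial weight $|Q'|^{q/3}$ built into the definition of $\alpha_n$ compensates for the possible growth of $u$, so that the resulting geometric series $\sum_{k'\geq n}2^{k'(q-\gamma)}$ converges for some $\gamma>q$; this convergence is precisely what the constraint $q\leq 2$ secures via the kernel decay provided by \eqref{EQ37}. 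Assembling all the above uniformly in $k\geq n$ and $Q\in S_k$ yields, for every $t\geq 0$, a schematic inequality
\begin{equation*}
\alpha_n(t)+\beta_n(t) \leq C\alpha_n(0) + C(t\,2^{-2n})\alpha_n(t) + C\,t^{a}\bigl(\alpha_n(t)+\beta_n(t)\bigr)^b,
\end{equation*}
with some $a>0$ and $b>1$ depending only on the exponents in the interpolation and pressure estimates.

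A continuity/bootstrap argument now closes the estimate: on any interval $[0,T]$ on which $\alpha_n(t)+\beta_n(t)\leq 4C\alpha_n(0)$, the displayed inequality forces the right-hand side to be at most $2C\alpha_n(0)$ whenever both $t\,2^{-2n}$ and $t^a\alpha_n(0)^{b-1}$ are sufficiently small, so the bound $\alpha_n(t)+\beta_n(t)\leq C\alpha_n(0)$ propagates. Solving the two smallness constraints for $t$ gives $T_n\gtrsim\min\{2^{2n},\|u_0\|_{M^{2,q}_{\mathcal{C}_n}}^{-(b-1)/a}\}$, which implies \eqref{EQ10} with $\gamma_0=a/(b-1)$ (rounded up to $\geq 1$ if necessary). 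The main obstacle throughout is the pressure analysis, specifically the harmonic contributions $p_{\harm,\leq 1}$ and $p_{\harm,\geq 1}$ forced by the no-slip boundary condition: they have no whole-space analogue, and their non-local tails must be summed uniformly over the disparate length scales present in $\mathcal{C}_n$, which is exactly where the polynomial weight $|Q|^{q/3}$ plays its essential role.
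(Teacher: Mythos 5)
Your high-level architecture is the same as the paper's: a localized energy inequality tested against cutoffs adapted to $\mathcal{C}_n$, Gagliardo--Nirenberg for the cubic flux, the local pressure expansion \eqref{EQ07} with its estimates \eqref{EQ37}, and a barrier argument to extract $T_n$. But the schematic inequality you assert,
$\alpha_n(t)+\beta_n(t)\leq C\alpha_n(0)+C(t\,2^{-2n})\alpha_n(t)+Ct^a\bigl(\alpha_n(t)+\beta_n(t)\bigr)^b$ with $b>1$,
conceals exactly the constraint that makes this step hard, and as written it is not what the pressure estimates deliver. Lumping $\beta_n$ into $(\alpha_n+\beta_n)^b$ with $b>1$ presupposes that the $\beta_n$-contribution from the pressure term can be estimated with total exponent strictly above $1$ and still closed; in fact, in the local energy inequality $\beta_n(t)$ appears on the left with exponent exactly $1$, and every $\beta_n$-contribution on the right must be absorbed into the dissipation via Young's inequality, which forces its total power there to be at most $1$. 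Arranging this for $p_{\loc,\harm}$ is the delicate point: the paper's remark preceding Lemma~\ref{L04} explains that the ``natural'' choice $L^{3/2}_tL^{9/5}_x$ produces the $L^\infty$-in-time norm of $\alpha_n$ (so no time smallness whatever), while pushing slightly to $L^{3/2}_tL^{8/5}_x$ makes the combined $\beta_n$-power $9/8>1$. The choice $r=17/10$ is what simultaneously yields a finite $L^p_t$ norm of $\alpha_n$ (giving the needed smallness as $t\to 0$) and a total $\beta_n$-power $3/4<1$. Your claim that $p_{\loc,\harm}$ ``yields Calder\'on--Zygmund type bounds of the same form as the cubic term'' is also inaccurate: that estimate comes from maximal parabolic regularity for the half-space Stokes problem combined with a Poincar\'e--Sobolev--Wirtinger inequality, and its exponent structure is genuinely different from the convective term's. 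After the exponent bookkeeping is done carefully, what one obtains is (cf.~\eqref{EQ97})
$\alpha_n(t)+\beta_n(t)\lesssim\alpha_n(0)+(1+t)^\gamma\bigl(\|\alpha_n\|_{L^8(0,t)}^3+\|\alpha_n\|_{L^8(0,t)}^{3/2}+2^{-n}\|\alpha_n\|_{L^8(0,t)}\bigr)$, with no $\beta_n$ at all on the right, and only then does the barrier lemma (Lemma~\ref{L10}) produce the bound \eqref{EQ10}. So the plan is right, but the specific form of the closed inequality --- and the analysis needed to reach it --- is where the substance of the proof lies and where your sketch currently has a gap.
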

\colb
Another important ingredient in the proof of Theorem~\ref{T01} is the following stability result.

\cole
\begin{Theorem}\label{T03} Let $u_0\in \MC(\HH)$ and suppose that $\{u_0^{(k)}\}_{k\geq 1} \subset \MC$ is such that $\| u_0^{(k)}- u_0 \|_{M^{2,2}_\mathcal{C} }\to 0$. Moreover, suppose that $\{  (u^{(k)},p^{(k)})\}_{k\geq 1}$ is a collection of local energy solutions with initial data $u_0^{(k)}$ that satisfy the a~priori estimate of Theorem~\ref{T02}. Then there exists a subsequence $\{ k_l \}_{l\geq 1}$ such that $(u^{(k_l)},p^{(k_l)})$ converges in a weak sense to $(u,p)$ that is a global-in-time local energy solution with initial data $u_0$. In addition, this solution satisfies the a~priori estimate \eqref{EQ11}, and, given $\Om \Subset \HH$, each part of the local pressure expansion of $p^{(k_l)}$ converges strongly in $L^{\frac32}(\Om \times (0,T))$ to the corresponding part of the local pressure expansion of $p$, for every $T>0$.
\end{Theorem}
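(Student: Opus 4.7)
The proof will follow the standard weak compactness scheme: extract uniform bounds from Theorem~\ref{T02}, upgrade to strong $L^2_{\loc}$ compactness via an Aubin--Lions argument, show piecewise strong convergence of the pressure decomposition, and then pass to the limit in the weak formulation and the local energy inequality. The main obstruction will be Step 3 below, where one must prove strong $L^{3/2}$ convergence for each of the seven constituents of the half-space pressure expansion \eqref{EQ07}, in particular the harmonic pieces that carry the boundary trace information.

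\textbf{Step 1: Uniform bounds.}  Since $\|u_0^{(k)} - u_0\|_{\MC}\to 0$, the norms $\|u_0^{(k)}\|_{M^{2,2}_{\cn}}$ are bounded uniformly in $k$ for each fixed $n$, and by the decay property \eqref{EQ06} of elements of $\MC$, these norms tend to $0$ as $n\to\infty$ (uniformly in $k$ for $k$ large). Given any $T>0$, choose $n$ so large that $2^n\ge T$ and $\|u_0^{(k)}\|_{M^{2,2}_{\cn}}^{-1/\gamma_0}\ge T$ uniformly in $k$; Theorem~\ref{T02} then yields $\alpha_n^{(k)}(T)+\beta_n^{(k)}(T)\le C$ uniformly in $k$, so $u^{(k)}$ is bounded in $L^\infty_t L^2_{\loc}(\overline{\HH})\cap L^2_t H^1_{\loc}(\overline{\HH})$ on $[0,T]$. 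Using the pressure expansion estimate \eqref{EQ07}ff., one also obtains uniform bounds for each piece $p_\ell^{(k)}$ in $L^{3/2}_{\loc}(\overline{\HH}\times(0,T))$.

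\textbf{Step 2: Compactness.} Writing $\partial_t u^{(k)}=\Delta u^{(k)}-\nabla\cdot(u^{(k)}\otimes u^{(k)})-\nabla p^{(k)}$ and inserting the bounds from Step~1, we control $\partial_t u^{(k)}$ in $L^{4/3}(0,T;H^{-s}(\Omega))$ for some $s>0$ and every $\Omega\Subset\HH$. The Aubin--Lions lemma gives a subsequence, which, after a diagonal extraction over an exhaustion of $\overline{\HH}\times(0,\infty)$ by bounded sets, converges strongly in $L^2_{\loc}(\overline{\HH}\times[0,\infty))$, weakly-$*$ in $L^\infty_t L^2_{\loc}$, and weakly in $L^2_t H^1_{\loc}$, to some divergence-free $u$.

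\textbf{Step 3: Pressure convergence.} Fix $\Om\Subset\HH$ and $T>0$. For the local Helmholtz and harmonic pieces $p_{\loc,\Helm}^{(k)},p_{\loc,\harm}^{(k)}$, together with $p_{\li,\loc}^{(k)}$, the dependence on $u^{(k)}$ is through Calder\'on--Zygmund-type operators acting on $u^{(k)}\otimes u^{(k)}$ restricted to a bounded neighborhood of $\Om$; the bilinear estimate
\[
 \|u^{(k)}\otimes u^{(k)} - u\otimes u\|_{L^{3/2}} \leq \|u^{(k)}-u\|_{L^2}\bigl(\|u^{(k)}\|_{L^6}+\|u\|_{L^6}\bigr),
\]
combined with the strong $L^2_{\loc}$ convergence of Step~2 and the uniform $L^2_t H^1_{\loc}$ bound, yields strong $L^{3/2}$ convergence. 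For the nonlocal pieces $p_{\li,\nonloc}^{(k)}$, $p_{\nonloc,\Helm}^{(k)}$, and $p_{\harm,\leq 1}^{(k)},p_{\harm,\geq 1}^{(k)}$, the representation kernels decay in $|x-y|$, so a spatial truncation argument applies: split the $y$-integration at a large radius $R$; the tail is controlled uniformly in $k$ by the $M^{2,2}_{\mathcal{C}}$ bound and made small by taking $R$ large, while the near part converges strongly using Step~2 and the Calder\'on--Zygmund/Poisson extension estimates underlying the harmonic pieces.

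\textbf{Step 4: Passage to the limit.} With Steps~2 and 3, we pass to the limit in the distributional formulation of \eqref{EQ01}, the nonlinear term converging via strong $L^2_{\loc}$ convergence and the pressure term via the piecewise $L^{3/2}$ convergence. The local energy inequality \eqref{EQ155} passes by weak lower semicontinuity of $\iint|\nabla u|^2\phi$ and strong convergence of the remaining terms. The initial trace $u(t)\to u_0$ in $L^2_{\loc}$ as $t\to 0^+$ follows from the uniform energy continuity combined with $\|u_0^{(k)}-u_0\|_{\MC}\to 0$, and the weak continuity in time (5) of Definition~\ref{D02} is preserved. Finally, the bound \eqref{EQ11} passes by weak-$*$ lower semicontinuity. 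Because $T>0$ was arbitrary, $u$ is a global-in-time local energy solution, completing the proof.
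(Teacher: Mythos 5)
Your overall scheme (uniform bounds from Theorem~\ref{T02}, Aubin--Lions compactness, piecewise pressure convergence, passage to the limit) matches the paper's, but there is a genuine gap in Step~4 that the paper treats with care and that your proposal does not see.

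The issue is that the theorem asserts strong $L^{3/2}(\Om\times(0,T))$ convergence of each pressure piece only for $\Om\Subset\HH$, \emph{strictly inside} the half-space. The local energy inequality~\eqref{EQ155}, however, is tested against $\phi\in C_c^\infty(\overline{\HH}\times(0,T))$, whose support may touch the boundary $\p\HH$. For the local harmonic piece this matters: $p_{\loc,\harm}$ is not a Calder\'on--Zygmund operator acting on $u\otimes u$ (your Step~3 description is inaccurate on this point) but a Duhamel time integral against the kernel $q_\lambda$, and its estimate~\eqref{EQ37} is in $L^{3/2}_t L^{17/10}_x$, not $L^{3/2}_t L^{3/2}_x$, precisely because the near-boundary singularity prevents $L^{3/2}$ bounds up to $\p\HH$. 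The paper shows that $\bar p^{(k)}_{\loc,\harm}$ is bounded in $L^p$ on $K\cap\overline{\HH}$ only for $p<3/2$, with strong $L^{3/2}$ convergence obtained solely on compact subsets $\tilde K\Subset\HH$. To pass $\iint p^{(k)}_{\loc,\harm}\, u^{(k)}\cdot\nabla\phi$ to the limit the paper then splits $K=\tilde K\cup(K\setminus\tilde K)$, makes the boundary strip $K\setminus\tilde K$ thin so that $\|u\|_{L^{19/6}((K\setminus\tilde K)\times(0,t))}$ is small, and pairs this with uniform $L^{19/13}$ bounds on the pressures. Your Step~4 replaces this with the blanket assertion ``strong convergence of the remaining terms,'' which is not available near the boundary and therefore does not justify passing the local energy inequality to the limit.

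Two smaller points: (i) you group $p_{\li,\loc}$ with the nonlinear local pieces as depending on $u^{(k)}\otimes u^{(k)}$, but it is a \emph{linear} functional of $u_0^{(k)}$, so the relevant input is $\|u_0^{(k)}-u_0\|_{\MC}\to 0$ rather than a bilinear estimate; (ii) in Step~1 you need to observe (as the paper does) that the finitely many $k$ for which the time $T_n^{(k)}$ from~\eqref{EQ10} fails to dominate $T_n/2$ can be handled by passing to a larger $n'$, using the inclusion $u_0^{(k)}\in\MC$. The rest (diagonal extraction, weak lower semicontinuity for~\eqref{EQ11}, initial trace) is in line with the paper.
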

\colb

The most difficult part of the proofs of the above two theorems is the treatment of the pressure function. We note that each of the pressure parts $p_{\li,\loc}$, $p_{\li,\nonloc}$, $p_{\loc,\Helm}$, $p_{\loc,\harm}$, $p_{\nonloc,\Helm}$, $p_{\harm, \leq 1}$, $p_{\harm, \geq 1}$ is defined in a similar way as in the works \cite{MMP1,MMP2} in the uniformly locally integrable setting. However there are important differences in our treatment of each of these parts in our estimates (see \eqref{EQ37}). Our pressure bounds allow us to obtain the a~priori estimate \eqref{EQ11} as well as the strong convergence under the perturbation of the initial data $u_0$ in $M^{2,2}_\mathcal{C}$ mentioned in Theorem~\ref{T03}. Consequently, we obtain the explicit representation \eqref{EQ07} of the pressure for the weak solutions constructed in Theorem~\ref{T01}.

Our stability result can also be applied to construct self-similar and discretely self-similar solutions with very rough data. Recall that if $u$ solves \eqref{EQ01}, then so does $u^{(\la)}(x,t):=\la u (\la x,\la^2t)$ for $\la>0$. Self-similar (SS) solutions, i.e., solutions invariant with respect to the scaling of \eqref{EQ01} for all scaling factors $\la>0$, are noteworthy candidates for the non-uniqueness of Leray-Hopf weak solutions, as demonstrated by  \cite{JiaSveral-nonuniqueness,GuiSve}. On the other hand, discretely self-similar (DSS) solutions, i.e.,~solutions that satisfy the scaling invariance possibly only for some $\la>1$, are candidates for the failure of  eventual regularity \cite{BT1,BT8}. For small data, the existence and uniqueness of such solutions follow easily from the classical well-posedness results; see \cite{Koch-Tataru} and the references therein. The  more interesting case of large data has been only recently solved by Jia and \v{S}ver\'{a}k \cite{JiaSverak}, and some improvements and new approaches have been developed by e.g.~\cite{Tsai-DSSI,KT-SSHS,BT1,BT5,AlBr,CW,FL}. The roughest class of scaling invariant initial data for which existence is known is $L^2_\loc (\RR^3)$ \cite{CW,BT5,LR2,FL}. Note that if $u_0\in L^2_\loc(\RR^3)$ is scaling invariant then it belongs to the $\RR^3$ version of the space $M^{2,1}_{\mathcal C}$, see \cite{BK}. Indeed, this observation led the first and second authors to study these spaces in \cite{BK}.  

In the case of the half-space, Tsai, and Korobkov\cite{KT-SSHS} established the original theory of Jia and  \v{S}ver\'{a}k \cite{JiaSverak} for smooth, self-similar data via a new method and, later, Tsai and the first author \cite{BT2} addressed rough, discretely self-similar initial data in $L^{3,\I}$ with arbitrary scaling factor.  As a consequence of Theorem~\ref{T03}, we prove that any SS/DSS initial data in $\MC$ gives rise to a SS/DSS solution. This class of initial data corresponds to the roughest case for $\RR^3$ \cite{CW,BT5,LR2,FL} with a suitable boundary condition imposed. {To see why this is true, assume that $u_0\in L^2_\loc (\HH)$ is divergence free with vanishing normal component at the boundary (this is the boundary condition implicit in the space $\MC$ since $\MC$ is obtained by taking the closure of compactly supported test functions; see \cite[Proposition 1.5]{CF}). If, additionally, $u_0$ is scaling invariant, then, by a re-scaling argument, $u_0\in M^{2,1}_\mathcal C \subset M^{2,2}_{\mathcal C}$. Furthermore, we have  $u_0\in \MC$ because $u_0$ decays at spatial infinity (from membership in $M^{2,1}_\mathcal C$) and satisfies the correct boundary condition.}

\begin{Theorem}[Global existence of self-similar solutions]\label{T06}
Assume $u_0\in \MC$ is divergence free  and is discretely self-similar for some $\la>1$. Then there exists a global-in-time local energy solution $u$ with data $u_0$ that is discretely self-similar. If $u_0$ is self-similar, then so is $u$.
\end{Theorem}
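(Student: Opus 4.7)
The plan is to combine a DSS/SS-preserving mollification of the initial data with an existing construction of DSS/SS local energy solutions for smoother data, and then invoke the stability Theorem~\ref{T03} to pass to the limit with the scaling symmetry surviving under convergence.

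First, I would construct an approximating sequence $\{u_0^{(k)}\}_{k\ge 1}\subset \MC$ of smooth, divergence-free DSS (resp.\ SS) fields with $\|u_0^{(k)}-u_0\|_{M^{2,2}_\mathcal{C}}\to 0$. In the DSS case, fix a fundamental annular domain $A_\lambda = \{x\in \HH : 1\le |x| \le \lambda\}$, mollify a cutoff of $u_0$ on an enlargement of $A_\lambda$ (using a mollifier compatible with $\p\HH$ and a divergence-correcting Bogovskii-type operator to restore the constraints), and then extend by the scaling rule $u_0^{(k)}(x) = \lambda^j u_0^{(k)}(\lambda^j x)$ on the scaled shells. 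The SS case is analogous with a single shell and arbitrary scale. Convergence in $\MC$ reduces, by DSS-invariance of the norm up to a bounded factor, to convergence in $L^2(A_\lambda)$ on one fundamental domain, supplemented by \eqref{EQ06} to handle cubes of $\mathcal C$ overlapping the boundaries between scaled annuli.

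Second, for each $k$ I would produce a DSS (resp.\ SS) local energy solution $u^{(k)}$ with initial data $u_0^{(k)}$. Smooth DSS data with the natural $|x|^{-1}$ decay lie in $L^{3,\I}(\HH)$, so the Bradshaw--Tsai construction \cite{BT2} furnishes a DSS local energy solution; in the SS case, the Korobkov--Tsai construction \cite{KT-SSHS} applies. The smoothness of $u_0^{(k)}$ yields $u_0^{(k)} \in \mathring M^{2,q}_{\mathcal C_n}$ for every $n\in\NN$ and suitable $q\in[1,2]$, so the a~priori hypothesis of Theorem~\ref{T02} holds and $u^{(k)}$ satisfies the bound \eqref{EQ11}.

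Third, I would apply Theorem~\ref{T03} to $\{u^{(k)}\}$ to extract a subsequence $\{u^{(k_l)}\}$ converging to a global-in-time local energy solution $u$ with initial data $u_0$ satisfying \eqref{EQ11}. To transfer the scaling symmetry, note that each $u^{(k_l)}$ satisfies $u^{(k_l)}(x,t) = \lambda u^{(k_l)}(\lambda x, \lambda^2 t)$ a.e., and the strong $L^2_\loc$ spacetime convergence of $u^{(k_l)}\to u$ (standard for local energy solutions via Aubin--Lions, and implicit in the pressure convergence of Theorem~\ref{T03}) transfers this identity to $u$. The SS case follows by the same argument applied to every $\lambda > 0$, or directly from the SS structure of the approximants. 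The main obstacle is the DSS-preserving approximation step: since the tiling $\mathcal C$ is not invariant under $\lambda$-scaling for general $\lambda > 1$, verifying convergence in the $M^{2,2}_\mathcal{C}$ norm requires careful control of the contributions from cubes overlapping multiple scaled annuli, relying on the decay property \eqref{EQ06} together with the fact that only finitely many cubes sit near the origin.
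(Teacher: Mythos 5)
Your proposal follows the same route as the paper's proof: approximate $u_0$ by $\lambda$-DSS (resp.\ SS) data in $L^{3,\I}(\HH)$ satisfying the boundary condition (the paper's Lemma~\ref{L12}, which defers to \cite[Lemma 4.1]{BT5} for the construction), produce for each approximant a DSS/SS local energy solution via the EP-solution existence result of Theorem~\ref{T05}/\cite{BT2}, and then apply the stability Theorem~\ref{T03} to pass to the limit with the scaling symmetry surviving the strong $L^2_\loc$ convergence. The only deviation is that you sketch an explicit mollification-plus-Bogovskii construction for the approximating sequence where the paper simply invokes \cite{BT5}, and you separately cite \cite{KT-SSHS} for the SS case whereas the paper uses \cite{BT2} for both; the skeleton of the argument is identical.
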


The proof of Theorem~\ref{T06} uses our new a priori bounds to construct discretely self-similar solutions  via the stability result of Theorem~\ref{T03} applied to a sequence of solutions given by \cite{BT2}. This is similar to the approach taken by the first author and Tsai in \cite{BT5} in the case of $\RR^3$. However, an important difference is that \cite{BT5} deals with the non-local pressure by exploiting the DSS scaling to localize the far-field part, while in the present paper this technical step is unnecessary  because the far-field part of the pressure is controlled using the weighted $L^2$ framework.\\

Finally,  we show that local energy solutions eventually become regular, up to the boundary, provided $u_0$ belongs to a subspace of $\mathring M^{2,2}_{\mathcal{C}}$. This provides an extension of Theorem D of \cite{CKN} in the setting of half-space $\HH$ by allowing growth of $u_0$ at spatial infinity. We show that if $u_0$ is in  $\mathring M_{{\mathcal C}}^{2,q}$ where $q<1$, then an ensuing local energy solution has eventual regularity above a parabola. 
\cole
\begin{Theorem}
\label{T04}
If $u_0\in \mathring M_{{\mathcal C}}^{2,q}$  for some $q\in (0,1)$ and $\epsilon_0\in(0,1)$, then there exists $M>0$ such that any local energy solution $u$ with initial data $u_0$ is regular (up to the boundary) in the region
  \begin{equation}
   \{(x,t)\in \HH\times(0,\infty):
     t\geq  \epsilon_0|x|^2 + M
   \}
   .
   \llabel{EQ12}
  \end{equation}
\end{Theorem}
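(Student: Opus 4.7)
\emph{Plan and rescaling.} The plan is to show that every $(x_0,t_0)$ with $t_0\geq \epsilon_0|x_0|^2+M$ is a regular point of $u$ by applying a boundary $\epsilon$-regularity criterion (of Seregin or Gustafson-Kang-Tsai type) to a parabolically rescaled version of the solution, with the requisite smallness furnished by Theorem~\ref{T02}. Set
\[
R:=\sqrt{t_0},\qquad y_0:=x_0/R,\qquad v(y,s):=R\,u(Ry,R^2s),\qquad q_v(y,s):=R^2 p(Ry,R^2s).
\]
Then $(v,q_v)$ is a local energy solution on $\HH\times(0,\infty)$ with initial data $v_0(y)=R\,u_0(Ry)$, and $u$ is regular at $(x_0,t_0)$ if and only if $v$ is regular at $(y_0,1)$. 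The hypothesis on $(x_0,t_0)$ yields $|y_0|\leq 1/\sqrt{\epsilon_0}$, so the rescaled target point is confined to a fixed bounded set.

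\emph{Smallness via Theorem~\ref{T02}.} A direct change of variables shows
\[
\|v_0\|_{M^{2,q}_{\mathcal{C}_n}}^2 \leq R^{q-1}\,\|u_0\|_{M^{2,q}_{\mathcal{C}}}^2\qquad(n\geq 1,\ R\geq 1),
\]
which vanishes as $R\to\infty$ because $q<1$. Choose $n_0=n_0(\epsilon_0)\in\NN$ so that $B_1(y_0)\subset [-2^{n_0},2^{n_0}]^3$; then $B_1(y_0)\cap\HH$ is covered by $J=J(\epsilon_0)=O(1)$ cubes of $\mathcal{C}_{n_0}$ of side length $2^{n_0}$. The inequality $|Q|^{(q-1)/3}\leq 2^{n_0(q-1)}$ for $Q\in\mathcal{C}_{n_0}$ gives the embedding $M^{2,q}_{\mathcal{C}_{n_0}}\hookrightarrow M^{2,1}_{\mathcal{C}_{n_0}}$ with norm $\lesssim 2^{n_0(q-1)/2}$, so
\[
\|v_0\|_{M^{2,1}_{\mathcal{C}_{n_0}}}^2 \leq (2^{n_0}R)^{q-1}\|u_0\|_{M^{2,q}_{\mathcal{C}}}^2 .
\]
For $R$ sufficiently large (depending on $\epsilon_0$, $\gamma_0$, $\|u_0\|_{M^{2,q}_{\mathcal{C}}}$), the right-hand side is so small that $\|v_0\|_{M^{2,1}_{\mathcal{C}_{n_0}}}^{-1/\gamma_0}\geq 2^{n_0}$, and Theorem~\ref{T02} applied to $v$ with $q=1$ yields $T^{(v)}_{n_0}\geq 2^{n_0}\geq 1$ together with
\[
\int_0^1\!\!\int_{B_1(y_0)\cap\HH}|\nabla v|^2\,dy\,ds \;\lesssim_{\epsilon_0}\;2^{n_0 q}\,R^{q-1}\|u_0\|_{M^{2,q}_{\mathcal{C}}}^2
\]
and a matching bound for $\sup_{0<s<1}\|v(s)\|_{L^2(B_1(y_0)\cap\HH)}^2$. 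Since $q<1$, both quantities tend to zero as $R\to\infty$, so one can choose $M$ so large that for every $t_0\geq M$ each of them is strictly below the threshold $\epsilon_*$ of the $\epsilon$-regularity criterion.

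\emph{Conclusion and main obstacle.} Boundary $\epsilon$-regularity then forces $v$ to be regular up to the boundary at $(y_0,1)$, and hence $u$ at $(x_0,t_0)$. The remaining, and technically hardest, ingredient is the matching smallness of the pressure $q_v$ on $B_1(y_0)\times(0,1)$: using the local pressure expansion \eqref{EQ07} applied to $v$ together with the estimates \eqref{EQ37}, the local pieces $p_{\li,\loc}$, $p_{\loc,\Helm}$, $p_{\loc,\harm}$ inherit smallness directly from the velocity bound above, whereas the non-local pieces $p_{\li,\nonloc}$, $p_{\nonloc,\Helm}$, $p_{\harm,\leq 1}$, $p_{\harm,\geq 1}$ are genuinely multi-scale in the half-space and require the uniform scaling bound $\|v_0\|_{M^{2,q}_{\mathcal{C}_n}}^2\leq R^{q-1}\|u_0\|_{M^{2,q}_{\mathcal{C}}}^2$ across all $n\geq 1$ together with the full strength of \eqref{EQ37}. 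Enlarging $M$ if necessary so that all pressure pieces are also below $\epsilon_*$ completes the argument.
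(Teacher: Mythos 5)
Your approach---rescale by $R=\sqrt{t_0}$ and apply Theorem~\ref{T02} to $v(y,s)=R\,u(Ry,R^2s)$---is genuinely different from what the paper does, and is a nice observation. The paper works with $(u,p)$ directly at scales $\mathcal{C}_n$ over time horizon $\sim 2^{2n}$; to do that it must replace the $(1+t)^\gamma$ prefactor in the pressure bound of Lemma~\ref{L09} by a prefactor $(1+t|Q|^{-2/3})^C$ that is $O(1)$ on the parabolic time scale (this is exactly what Lemma~\ref{L14} proves), and must then use a separate barrier, Lemma~\ref{L13}, since Lemma~\ref{L10} does not reach $T\sim 2^{2n}$. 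Your rescaling normalizes the time horizon to $[0,1]$, so the $(1+t)^\gamma$ factor is harmless and you can invoke the already-proved a priori bound of Theorem~\ref{T02} rather than prove a sharpened one. In that sense the velocity smallness argument is cleaner in your version.

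However, there is a real gap: you have not proved the pressure smallness, which is the content of most of Section~\ref{sec07} of the paper. The Seregin--Shilkin--Solonnikov boundary criterion needs smallness of $\iint|u|^3$ \emph{and} of $\iint|p-\theta|^{3/2}$ on the (rescaled) parabolic cylinder, and it is the latter that occupies the bulk of the paper's proof via the estimates \eqref{EQ144}--\eqref{EQ150}, carefully worked out piece by piece (including the time-singularity at $t=0$, which forces the lower time endpoint to be $1$ rather than $0$). Your statement ``Enlarging $M$ if necessary so that all pressure pieces are also below $\epsilon_*$'' is not an argument---it presupposes the conclusion and does not explain why the seven pieces in \eqref{EQ07}, with their very different estimate forms in \eqref{EQ37}, are simultaneously controllable by enlarging $M$. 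In particular the discussion of ``local'' vs.\ ``nonlocal'' pieces in your final paragraph is imprecise: for instance $p_{\li,\loc}$ carries the $t^{-3/4}$ singularity and so is not trivially small from the velocity bound, and the $p_{\loc,\harm}$ bound enters with a negative power of $|Q|$, not just via the velocity energy. Two further ingredients you omit: (i) you assert without verification that the rescaled $(v,q_v)$ is a local energy solution in the sense of Definition~\ref{D02}, which requires showing the local pressure expansion (condition (6)) is invariant under the parabolic rescaling---this is true by the uniqueness statement \eqref{EQ36} but needs to be said; and (ii) the criterion of \cite{SSS} requires $\na p, D^2 u\in L^{3/2}_{\loc}L^{9/8}_{\loc}$, which is the content of the paper's Lemma~\ref{L16} and is not automatic from the weak-solution hypotheses.
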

\colb

The paper is organized as follows. Section \ref{sec03} contains the study of the local pressure expansion and provides the main estimates \eqref{EQ37} for all pressure parts. This is then used in Section \ref{sec04}, where we prove the a~priori estimate, Theorem~\ref{T02}. Section \ref{sec05} contains the proof of the stability result, Theorem~\ref{T03}. We then prove the main existence results, Theorem~\ref{T01} and Theorem~\ref{T06} in Section \ref{sec06}. The proof of the eventual regularity result, Theorem~\ref{T04} is provided in Section~\ref{sec07}.

\section{Pressure formula and estimates}\label{sec03}

 Given a bounded, open set $\Om\subset \HH$ and $N$ let $\underline{n},\overline{n} \geq N$ be such that 
\eqnb\label{EQ44}
\Om \text{ can be covered using cubes from } S_{\underline{n}}\cup S_{\underline{n}+1} \cup \cdots \cup S_{\overline{n}}.\eqne
 Note that if $\Om \in \mathcal{C}_N$ for some $N\geq 1$, then $\un = \on =n$, where $n$ is such that $\Om \in S_n$. (Recall \eqref{EQ_C} for the definition of the family $\mathcal{C}_N$, see also Fig. \ref{fig_cubes}.)

Given $\Om$ and $N$, let $Q$ be the union of (closed) cubes from $\mathcal{C}_N$ that have nonempty intersection with $\Om$. Denote by $Q^*$ the union of the neighbors of $Q$, i.e., the union of $Q$ and all cubes from $\mathcal{C}_N$ that share at least one common boundary point with $Q$. We similarly define $Q^{**}$ and $Q^{***}$. We set $\chi\in C_0^\infty (\overline{\HH},[0,1])$ be such that $\chi=1$ on a neighborhood of $Q$ and $\chi=0$ outside $Q^*$, and we define $\chi_*$ and $\chi_{**}$ analogously. In the pressure estimates below we shall use the following simple geometric fact: If $\xi \in Q$ and  $z\in \{ \chi <1 \}$ is such that $z\in \tilde{Q} $  for some $\tilde{Q} \in S^k\subset \mathcal{C}_N $, then
\eqnb\label{EQ14}
|\xi'-z' | + \xi_3 +z_3 
\gec
\begin{cases} 
\,2^{\un} \qquad &k\leq \un ,\\
\, 2^k &k\geq \un+1,
\end{cases}
\eqne
where we used the notation $x=(x',x_3)$ to distinguish the horizontal component $x'$ and the vertical component $x_3$ of any given point $x$ of the half-space $\HH$. 
Indeed, if $k\leq  \un$, then either $|\xi'-z'| \gtrsim 2^{\un }$ (if $Q$ touches the plane $\p \HH$ and $z$ does not lie in a cube above $Q$), $z_3\gtrsim 2^{\un } $ (if $z$ does lie in a cube above $Q$) or $\xi_3\gtrsim 2^{\un }$ (if $Q$ does not touch the plane). The case $k\geq \un +1$ follows similarly as either $|\xi'-z'| \gtrsim 2^k $ (if $z$ lies in a cube touching the plane $\p \HH$ and $\xi$ does not lie in a cube above it), $\xi_3\gtrsim 2^k$ (if it does) or $z_3 \gtrsim 2^k$ (if $z$ lies in a cube not touching the plane). Furthermore, note that
\eqnb\label{EQ15}
|\xi - z| \lec 2^{\on} \qquad \text{ for any }\xi,z\in Q^{***}
.
\eqne
We write
\begin{equation}
u=u_{\li}+u_{\loc}+u_{\nonloc},\quad p=p_{\li}+p_{\loc}+p_{\nonloc},
    \llabel{EQ16}
  \end{equation}
where the terms on the right-hand sides are solutions to 
the linear part
\begin{equation}
\begin{cases}
\p_t u_{\li} - \Delta u_{\li} + \na p_{\li} =0,\\
\na \cdot u_{\li} =0,\\
u_{\li}|_{\{ z_3=0 \} } = 0,\\
u_{\li}|_{\{ t=0 \} } = u_0,
\end{cases}
    \llabel{EQ17}
  \end{equation}
the local part
\eqnb\label{EQ18}
\begin{cases}
\p_t u_{\loc} - \Delta u_{\loc} + \na p_{\loc} =-\na \cdot (\chi_{**} u\otimes u), \\
\na \cdot u_{\loc} =0,\\
u_{\loc}|_{\{ z_3=0 \} } = 0,\\
u_{\loc}|_{\{ t=0 \} } = 0,
\end{cases}
\eqne
and the nonlocal part
\begin{equation}
\begin{cases}
\p_t u_{\nonloc} - \Delta u_{\nonloc} + \na p_{\nonloc} =-\na \cdot ((1-\chi_{**}) u\otimes u), \\
\na \cdot u_{\nonloc} =0,\\
u_{\nonloc}|_{\{ z_3=0 \} } = 0,\\
u_{\nonloc}|_{\{ t=0 \} } = 0.
\end{cases}
    \llabel{EQ19}
  \end{equation}
We note that each of the pressure components enters the equation with a gradient, and thus it can be modified by an arbitrary function of $t$. 
We have the representation
  \begin{equation}\label{EQ20}\begin{split}
    p_{\li}(x,t)& = \frac{1}{2\pi i} \int_{\Gamma } \ee^{\la t} \int_{\HH} 
        \left(  
          \chi q_\la (x'-z',x_3,z_3) \cdot u_0' (z) 
          +(1-\chi) q_{\la ,x,x_Q } (z ) \cdot u_0' (z) \right) \d z' \, \d z_3 \, \d \la \\
&= p_{\li,\loc} (x,t) + p_{\li,\nonloc} (x,t),
    \end{split}
  \end{equation}
where $x=(x',x_3)$ and
 $\Gamma = \{  \la \in \CC : | \mathrm{arg}\la | = \eta \}\cap \{ \la \in \CC \colon | \mathrm{arg}\la | \leq  \eta , |\la |=\kappa \}$ with $\eta \in (\pi/2 , \pi)$ and $\kappa \in (0,1)$,
\begin{equation}
q_\lambda (x',x_3, z_3 ) \coloneqq i \int_{\RR^2} \ee^{ix'\cdot \xi} \ee^{-|\xi | x_d } \ee^{-\omega_\la (\xi ) z_3} \left( \frac{\xi}{|\xi |} + \frac{\xi}{\omega_\la (\xi )} \right) \d \xi,
    \llabel{EQ21}
  \end{equation}
$\omega_\la (\xi )\coloneqq \sqrt{\la + |\xi |^2}$, and 
\eqnb\llabel{EQ22}
q_{\la , x ,x_\Om } (z) \coloneqq q_\lambda (x'-z',x_3, z_3 ) - q_\la (x_{\Om}'-z' ,x_{\Om,3},z_3),
\eqne
cf.~(2.5)--(2.9) in \cite{MMP1} and (2.8e) in \cite{MMP2}.
Above, $x_{\Om}=(x_{\Om}',x_{\Om,3})$ stands for any fixed point of $\Om$; if $\Om$ is a cube, we let $x_{\Om}$ be the center of~$\Om$.
Moreover, we have the pointwise estimates
  \eqnb\label{EQ23}
  \begin{split}
  | \na_x^m q_\la (x'-z',x_3,z_3) | 
     &\lec_m  
       \frac{\ee^{-|\la |^{\fract{1}{2}}z_3} }{(|x'-z'| + x_3 +z_3 )^{2+m}}
  \comma m\in {\mathbb N}_0
  ,
  \end{split}
  \eqne
which are proven in Proposition 3.7 in \cite{MMP2}.

As for the local (nonlinear) pressure $p_{\loc}$, we use the Helmholtz decomposition in the half-space to write
  \begin{equation}\label{EQ24}
   \na \cdot (\chi_{**} u\otimes u)=  \mathbb{P}\na \cdot (\chi_{**} u\otimes u) + \na p_{\loc,\Helm}
    ,
  \end{equation}
where $p_{\loc,\Helm}$ is the solution to Poisson equation with the Neumann boundary condition
\begin{equation}
\begin{cases}
- \Delta p_{\loc,\Helm} = \p_i \p_j (\chi_{**} u_iu_j) \quad \text{ in } \HH,\\
\p_3 p_{\loc,\Helm} = \p_i (\chi_{**} u_i u_3)  \quad \text{ on } \p \HH.
\end{cases}
    \llabel{EQ25}
  \end{equation}
The solution is given by 
\begin{equation}\label{EQ26}
p_{\loc,\Helm} (x,t) = c_0 \chi_{**} |u|^2 (x,t) +  \int_{\HH} \p_{z_i} \p_{z_j} N (x,z) \chi_{**}(z) u_i (z,t) u_j (z,t) \d z
   ,
  \end{equation}
where  $c_0$ is a constant; also
\eqnb\label{EQ27}
N(x,z)= \frac{1}{4\pi} \left(\frac{1}{|x-z|}+\frac{1}{|\overline{x}-z|}\right)
\eqne denotes the Neumann kernel for the half-space, where
$\overline{x}= (x_1,x_2,-x_3)$ is the reflection of $x$ with respect to the boundary $\p \HH$. With this definition of the local Helmholtz pressure $p_{\loc,\Helm}$ one can use Fourier analytic methods (cf.~(6.2) in \cite{MMP2} and Appendix A.1 in \cite{MMP1}) to deduce that for the Leray projection, we have
\begin{equation}
  \mathbb{P}\na \cdot (\chi_{**} u\otimes u) = F_A + F_B,
    \llabel{EQ28}
  \end{equation}
where $F_A$ is a 2D vector function whose components are finite sums of the terms of the form
  \eqnb\label{EQ29}
  \p_j \left( \chi_{**} u_k u_l \right)
  \eqne
where $j,k,l\in \{ 1,2,3 \}$, and $F_B(z,s)=F_B(z',z_3,s)$ is a finite sum of 2D~vectors of the form
  \eqnb\label{EQ30}
  m(D') \na' \otimes \na' \int_0^\infty  \left( \left( P(\cdot,|z_3 -y_3|) + P(\cdot,z_3+y_3) \right)\ast (\chi_{**} v\otimes w ) (y_3) \right)(z',s) \d y_3  ,
  \eqne
where $v$ and $w$ denotes various 2D vectors whose components are chosen among $u_1$, $u_2$, or $u_3$;
also,  $m(D')$ denotes a multiplier in horizontal variable $z'$ that is homogeneous of degree $0$, and $\widehat{P}(\xi', t) =\ee^{-t|\xi' |}$,
i.e., $P$ is the 2D Poisson kernel.
The convolution in \eqref{EQ30} is in the horizontal variables.
Thus, letting $(u_{\loc,\harm},p_{\loc,\harm})$ be a solution to \eqref{EQ18} but with the right-hand side replaced by $- \mathbb{P}\na \cdot (\chi_{**} u\otimes u)$ we see that
  \begin{equation}
    p_{\loc} = p_{\loc,\Helm}+p_{\loc,\harm} 
    ,
   \llabel{EQ31}
  \end{equation}
by applying the Helmholtz decomposition \eqref{EQ24} to \eqref{EQ18},
and, from the Duhamel principle
  \eqnb\label{EQ32}
    p_{\loc,\harm} (x,t) 
          =\frac{1}{2\pi i }\int_0^t \int_\Gamma \ee^{(t-s)\la } \int_{\HH} q_{\la} (x'-z',x_3,z_3 ) \cdot ( F_A(z,s)+F_B(z,s) ) \d z \, \d \la \,\d s
   .
  \eqne

As for the nonlocal (nonlinear) pressure $p_{\nonloc}$, we use the Helmholtz decomposition to write (similarly as in the case of $p_{\loc}$) 
\begin{equation}
\na \cdot ((1-\chi_{**}) u\otimes u)= \mathbb{P}\na \cdot ((1-\chi_{**}) u\otimes u) + \na p_{\nonloc,\Helm},
    \llabel{EQ33}
  \end{equation}
where
\begin{equation}\label{EQ34}
p_{\nonloc,\Helm} (x,t) = \int_{\HH} \p_{z_i} \p_{z_j} N_{x,x_{\Om}} (z)(1- \chi_{**}(z)) u_i (z,t) u_j (z,t) \d z
  \end{equation}
and $N_{x,x_{\Om}} (z)= N(x,z)-N(x_{\Om},z)$. Note that by introducing $N (x_{\Om} , z )$ we have modified $p_{\nonloc,\Helm}$ by a function of $t$ only (cf.~local Helmholtz pressure, $p_{\loc,\Helm}$, above), which therefore makes no change to $\na p_{\nonloc,\Helm}$.

Similarly, we can modify the nonlocal harmonic pressure by writing 
\eqnb\label{EQ35}\begin{split}
p_{\harm}(x,t)&= \frac{1}{2\pi i} \int_0^t \int_\Gamma \ee^{\la (t-s)} \int_{\HH} q_\la (x'-z', x_3 ,z_3) \cdot \chi_* F_B(z,s) \d z'\,\d z_3 \, \d \la \, \d s \\
&\hspace{1cm}+ \frac{1}{2\pi i} \int_0^t \int_\Gamma \ee^{\la (t-s)} \int_{\HH} q_{\la, x,x_{\Om }} (z) \cdot (1-\chi_*)( F_A(z,s)+F_B(z,s) ) \d z \, \d \la \, \d s\\
&= p_{\harm, \leq 1}(x,t)+p_{\harm,\geq 1}(x,t)
\end{split}
\eqne
(see also (2.17) in \cite{MMP1}), 
where $F_A$ and $F_B$ are defined as in \eqref{EQ29} and \eqref{EQ30} with $\chi_{**}$ replaced by $(1-\chi_{**})$. Note that there is no $F_A$ part in $p_{\harm,\leq 1}$ as $\chi_*$ vanishes on $\supp {(1-\chi_{**})}$.\\

We point out that the above representation of the pressure function is \emph{unique} on a given bounded open set $\Omega \subset \HH$, up to a function of time, of any local energy solution $u$. Indeed, each of the pressure parts $p_{\li,\loc}$, $p_{\li,\nonloc}$, $p_{\loc,\Helm}$, $p_{\loc,\harm}$, $p_{\nonloc,\Helm}$, $p_{\harm, \leq 1}$, $p_{\harm, \geq 1}$ depends only on $u$ (rather than on our decomposition $u_{\li}+u_{\loc }+u_{\nonloc}$), and so uniqueness (up to a function of time) follows from the distributional form of the Navier-Stokes equations (recall Definition \ref{D02}). In other words, if given ${\Om}, \Om' \subset \HH$ are such that $\Om\subset \Om'$ and if we define the pressure functions $p_{\Om}, p_{\Om'}$ as the sum of the above pressure parts (respectively), then
\eqnb\label{EQ36}
p_{\Om } - p_{\Om'} = c_{\Om , \Om'} (t)
\eqne
on $\Om$ for some $c_{\Om , \Om'}$, a function of time only.\\

In the remaining part of this section, we fix $N\geq 1$ and we prove the following estimates on any given bounded open set $\Om \subset \HH$.
  \eqnb\label{EQ37}
  \begin{split}
  \| p_{\li,\loc} (t) \|_{L^2 (\Om )}&\lec  2^{\frac{q+1}2\on }  \|u_0  \|_{M^{2,q}}  t^{-\frac{3}{4}},\\
  \| p_{\li,\nonloc} (t) \|_{L^\infty (\Om )}&\lec  2^{\on + \fract{q-4}{2}\un} \| u_0 \|_{M^{2,q}}  t^{-\frac{3}{4}}\\ 
  \| p_{\loc,\Helm} (t) \|_{L^{\fract{3}{2}}(\HH)} &\lec \| u (t) \|^{2}_{L^3 (Q^{***})} ,\\
  \| p_{\loc,\harm} -\theta\|_{L^{\frac32} ((0,t); L^{\frac{17}{10}} (\HH))} 
          &\lec   2^{q\on } \left(
       \| \alpha \|_{L^{\fract{39}{5}}(0,t)}^{\fract{13}{34}} \beta(t)^{\fract{21}{34}}+ 2^{-\fract{4}{17}\un}\| \alpha \|_{L^3 (0,t)}^{\fract12}   \beta(t)^{\fract12}+2^{-\fract{21}{17}\un} \| \alpha \|_{L^{\fract32}(0,t)}   \right)\\
  \| p_{\nonloc,\Helm} (t) \|_{L^{\infty}(\Om  )} &\lec  2^{\on + (q-4)\un } \|u(t) \|_{M^{2,q}}^2  ,\\
   \| p_{\harm, \geq 1} \|_{L^r ((0,T);L^\infty (\Om ))} &\lec 2^{\on +(q-4)\un  } (1+T)^{\gamma} \left( 2^{\frac{4\delta}3\un }\| \alpha \|_{L^{\frac{r(1-\delta )}{1-r\delta }}(0,T)}^{1-\delta }  \beta(T)^{\delta }\right.\\
   &\hspace{5cm}\left. +(1+T^{\frac12} 2^{\on q -(1+q)\un} ) \| \alpha \|_{L^r (0,T)}  \right) \\
  \| p_{\harm, \leq 1} (t)\|_{L^\infty (\Om )} &\lec 2^{\frac{\on}{4} + \left(q-4\right)\un} 
        \alpha(t)  t^{\frac38}
  ,
  \end{split}
  \eqne
for $t>0$, $r\in [1,\infty )$, $\gamma \in (0,1)$, $q\in (0,3)$, and $\delta \in (0, \min\{1/r,3/4,3q/2)\}$, where $\theta$ is a function of $t$ only. The implicit constants  depend on $q$, $r$ ,$ \delta$, $\gamma$, $\kappa$ and $\eta$; we used the notation \eqref{EQ08}.
For simplicity of notation, we abbreviate
$M^{2,q}= M^{2,q}_{\mathcal{C}_N}$.

Recall \eqref{EQ44} that $\Omega$ also gives $\un,\on \geq N$. We use these two indices to obtain the local pressure expansion (recall Definition \ref{D02}(6)) for \emph{any bounded open set $\Om $}. Since the cutoff functions $\chi$, $\chi_*$, $\chi_{**}$ appearing in the systems above are adapted to the geometry of the family $\mathcal{C}_N$, the same is true of the above estimates. It is therefore natural to quantify to what extend is any given  $\Om $ adapted to $C_N$, which we achieve using the two indices $\un, \on $. 

We note that the implicit constants in \eqref{EQ37} do not depend on $N$. If $\Om \in \mathcal{C}_N$ for some $N\geq 1$, then we have $\underline{n}=\overline{n}=n$, where $n\geq 1$ is such that $2^n$ is the side-length of $Q$. For such $Q$ the estimates reduce by replacing $2^{\on}$ and $2^{\un}$ with $|Q|^{1/3}$.
 An important property to keep in mind is that if $\Omega$ is a cube from $ \mathcal{C}_1$ that also belongs to $\mathcal{C}_N$ for some $N>1$ then the estimates get sharper for larger $N$. 

\cole
\begin{Lemma}[Estimate for $p_{\li,\loc}$]\label{L01}
For every $t>0$, we have
  \begin{equation}
   \| p_{\li,\loc} (t) \|_{L^2 (\Om )}\lec 2^{\on /2}   \|u_0  \|_{L^2(Q^{*})}  t^{-\frac{3}{4}}
   .
   \llabel{EQ38}
  \end{equation}
\end{Lemma}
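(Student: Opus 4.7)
The plan is to use the representation formula \eqref{EQ20}: writing
\[
p_{\li,\loc}(x,t) = \frac{1}{2\pi i}\int_\Ga \ee^{\la t} [P_\la u_0](x) \d \la,\quad \text{where}\quad [P_\la u_0](x) := \int_{Q^*} \chi(z) q_\la(x'-z',x_3,z_3) \cdot u_0'(z) \d z,
\]
Minkowski's inequality gives $\|p_{\li,\loc}(t)\|_{L^2(\Om)} \lec \int_\Ga |\ee^{\la t}| \|P_\la u_0\|_{L^2(\Om)} |\d \la|$. The main task is thus to establish the resolvent-type bound $\|P_\la u_0\|_{L^2(\Om)} \lec 2^{\on /2} |\la|^{-1/4} \|u_0\|_{L^2(Q^*)}$; once this is in hand, deforming $\Ga$ so that its arc has radius $\sim 1/t$ (permitted by the holomorphy of the integrand in $\la$) together with the standard computation $\int_\Ga |\ee^{\la t}| |\la|^{-1/4} |\d \la| \lec t^{-3/4}$ yields the claim.

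To obtain the resolvent bound, I would dualize: for $g \in L^2(\Om)$ with $\|g\|_{L^2(\Om)} = 1$,
\[
\int_\Om g(x) [P_\la u_0](x) \d x = \int_{Q^*} u_0'(z) \cdot H_g(z) \d z,\quad \text{where}\quad H_g(z) := \int_\Om g(x) q_\la(x'-z',x_3,z_3) \d x,
\]
so that by Cauchy-Schwarz in $z$ it suffices to show $\|H_g\|_{L^2(Q^*)} \lec 2^{\on /2} |\la|^{-1/4}$. I would pass to the horizontal Fourier side: the defining formula for $q_\la$ gives $\widehat{q_\la}(\xi, x_3, z_3) = i\ee^{-|\xi|x_3} \ee^{-\omega_\la(\xi) z_3} (\xi/|\xi| + \xi/\omega_\la(\xi))$. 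Minkowski in $z'$ followed by Plancherel, together with the pointwise Fourier bound $|\widehat{q_\la}(\xi, x_3, z_3)| \lec \ee^{-|\xi|x_3} \ee^{-c|\la|^{1/2} z_3}$, give
\[
\|H_g(\cdot, z_3)\|_{L^2_{z'}} \lec \ee^{-c|\la|^{1/2}z_3} \int_0^{H_\Om} \|g(\cdot, x_3)\|_{L^2_{x'}} \d x_3,
\]
where $H_\Om \lec 2^{\on }$ is the vertical extent of $\Om$ (by \eqref{EQ15}). Cauchy-Schwarz in $x_3$ and integration in $z_3 \in (0, \infty)$ then yield $\|H_g\|_{L^2(Q^*)}^2 \lec 2^{\on } |\la|^{-1/2} \|g\|_{L^2(\Om)}^2$, as required.

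The main technical obstacle lies in the uniform Fourier-side bound $|\widehat{q_\la}| \lec \ee^{-|\xi|x_3} \ee^{-c|\la|^{1/2} z_3}$, which hinges on $\re\omega_\la(\xi) \gec |\la|^{1/2}$ uniformly in $\xi \in \RR^2$ and $\la \in \Ga$. Since adding $|\xi|^2 \geq 0$ to $\la$ only pulls $\arg(\la + |\xi|^2)$ toward $0$, the principal square root $\omega_\la = (\la + |\xi|^2)^{1/2}$ lies in the sector $|\arg\omega_\la| \leq \eta/2 < \pi/2$; combined with $|\omega_\la|^2 \geq |\im(\la+|\xi|^2)|=|\im\la|$ on the rays (where $|\im\la|=|\la|\sin\eta$) and the direct observation $\omega_\la \geq \sqrt{|\la|}$ on the part of the arc near the positive real axis, this gives the required uniform lower bound on $\re\omega_\la$. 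Boundedness of the prefactor $\xi/|\xi| + \xi/\omega_\la$ follows by inspection in both regimes $|\xi|^2 \lec |\la|$ and $|\xi|^2 \gec |\la|$.
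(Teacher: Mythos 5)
Your proof is correct, and it takes a genuinely different technical route from the paper's. The paper works entirely in physical space: it invokes the pointwise kernel bound \eqref{EQ23}, $|q_\la(x'-z',x_3,z_3)|\lesssim \ee^{-|\la|^{1/2}z_3}(|x'-z'|+x_3+z_3)^{-2}$ (quoted from Proposition~3.7 of \cite{MMP2}), estimates the $z'$-convolution directly using $|x'-z'|\lesssim 2^{\on}$ on $\Om\times Q^*$ and the elementary bound $\int_{|y'|\leq a}(|y'|+b)^{-2}\,dy'\lesssim (a/b)^{1/4}$, obtaining the factor $2^{\on/4}(x_3+z_3)^{-1/4}$, and then performs Cauchy--Schwarz in $z_3$ against $\ee^{-|\la|^{1/2}z_3}$. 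You instead dualize against $g\in L^2(\Om)$ and pass to the horizontal Fourier side, applying Plancherel and the explicit multiplier formula $\widehat{q_\la}=i\ee^{-|\xi|x_3}\ee^{-\omega_\la(\xi)z_3}(\xi/|\xi|+\xi/\omega_\la(\xi))$ together with the sectorial estimate $\re\omega_\la\gec|\la|^{1/2}$ uniformly on $\Ga$. Both routes produce the same resolvent bound $\|P_\la u_0\|_{L^2(\Om)}\lesssim 2^{\on/2}|\la|^{-1/4}\|u_0\|_{L^2(Q^*)}$ and then integrate in $\la$ identically. What you gain: your argument is self-contained — it derives the needed estimate directly from the multiplier rather than citing the pointwise kernel bound — and it is insensitive to the horizontal extent of $\Om$ (Plancherel handles $z'$ globally), needing only the vertical extent $H_\Om\lesssim 2^{\on}$. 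What the paper's route gains: the pointwise bound \eqref{EQ23} is already established and reused throughout (Lemmas~\ref{L02}, \ref{L05}, \ref{L06}, \ref{L07}), so the physical-space argument is more uniform with the rest of the section and requires no new verification of contour properties of $\omega_\la$. Your observation that the $\la$-contour must be dilated to radius $\sim 1/t$ for the $t^{-3/4}$ bound is also a correct reading of a point the paper leaves implicit.
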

\colb

\begin{proof}[Proof of Lemma~\ref{L01}]
Fix $t>0$, and note that $\Om \subset \Om' \times \Omega_3$, where $\Omega'$ denotes the projection of $\Omega$ onto the $(x_1,x_2)$ plane, and $\Om_3$ onto the $Ox_3$ axis. For $x_3\in \Om_3$, we use \eqref{EQ20} to get 
  \eqnb\label{EQ39}
  \begin{split}
  \| p_{\li,\loc} (\cdot ,x_3) \|_{L^2_{x'} (\Om')} &\lec \int_\Ga \ee^{t\re\la }  
       \left\|  \int_{\HH}  
             \chi  q_\la (x' -z',x_3,z_3)\cdot u_0 (z)  \d z' \, \d z_3  
       \right\|_{L^2_{x'} (\Om')} \d |\la| \\
  & \lec \int_\Ga \ee^{t\re\la } \int_0^\infty \ee^{-|\la |^{\fract{1}{2}}z_3} \left\|\int_{\RR^2}  (|x' -z'|+x_3+z_3)^{-2} |\chi u_0 (z) |  \d z' \right\|_{L^2_{x'} (\Om')}  \d z_3 \d |\la |
  .
  \end{split}
  \eqne
Since 
  \begin{equation}
  |x'-z'|\leq |x'|+|z'| \lec 2^{\on}  ,
    \label{EQ40}  
  \end{equation}
for every $x\in \Om$ and $z\in Q^{*}$, it follows that for every $z_3$ 
  \eqnb
  \label{EQ41}
  \begin{split}
  &\left\|\int_{\RR^2}  (|x'-z'|+x_3+z_3)^{-2} |\chi u_0 (z) |  \d z' \right\|_{L^2_{x'} (\Om')}  
  \indeq\\&\indeq
  = \left\|\int_{\RR^2} 1_{C2^{\on} }(x' -z') (|x' -z'|+x_3+z_3)^{-2} |\chi u_0 (z) |  \d z' \right\|_{L^2_{x'} (\Om')} \\
  &\indeq \lec \| \chi u_0 (z' , z_3 ) \|_{L_{z'}^2(\RR^2)} \int_{|y'|\leq C 2^{\on }} (|y'|+x_3+z_3)^{-2} \d y'\\
   &\indeq\lec \| \chi u_0 (z' , z_3 ) \|_{L_{z'}^2(\RR^2)} 2^{\on /4}(x_3+z_3)^{-\fract{1}{4}}
   ,
  \end{split}
  \eqne
where we used  
$\int_{|y'|\leq a } (|y'|+b )^{-2} \d y' = 2\pi  ( \fractext{-a}{(a+b)} + \log (1+a/b))\leq c_\alpha (a/b)^\alpha$ for any $\alpha \in (0,1)$.
Therefore,
  \begin{align}
  \begin{split}
  &\left\|\int_0^\infty \ee^{-|\la |^{\fract{1}{2}}z_3} \left\|\int_{\RR^2}  (|x' -z'|+x_3+z_3)^{-2} |\chi u_0 (z) |  \d z' \right\|_{L^2_{x'} (\Om')}  \d z_3  \right\|_{L^2_{x_3} (\Om_3)} 
  \\&\indeq
  \lec 2^{\on /4} \| x_3^{-\fract{1}{4}} \|_{L_{x_3}^2 (\Om_3)}   
    \int_0^\infty \ee^{-|\la |^{\fract{1}{2}}z_3} \| \chi u_0 (z' , z_3 ) \|_{L^2_{z'}(\RR^2)}   \d z_3   
  \\&\indeq
  \lec 2^{\on /2}  \left( \int_0^\infty \ee^{-2|\la |^{\fract{1}{2}}z_3} \d z_3 \right)^{\fractext12}  \| \chi u_0 \|_{L^2(\HH)}  
  \\&\indeq
  \lec  2^{\on /2}  \| \chi u_0  \|_{L^2(\HH)}   {|\la |^{-\fract{1}{4}}}   
  .
  \end{split}
  \label{EQ42}  
  \end{align}
Finally, including the integral in $\la$,  we obtain from \eqref{EQ39},
  \begin{equation}
  \| p_{\li,\loc} \|_{L^2 (\Om )}
      \lec  2^{\on /2}  \| \chi u_0  \|_{L^2(\HH)}   \int_\Gamma \ee^{t\re \la }{|\la |^{-\fract{1}{4}}}   \d |\la | \lec 2^{\on /2}  \| \chi u_0  \|_{L^2(\HH)}  t^{-\frac{3}{4}}
  ,
    \llabel{EQ43}  
  \end{equation}
and the proof is concluded.
\end{proof}

\cole
\begin{Lemma}[Estimate for $p_{\li,\nonloc}$]
\label{L02}
For every $t>0$ 
  \begin{equation}
  \| p_{\li,\nonloc} (t) \|_{L^\infty (\Om)}\lec_{q }2^{\on + \fract{q-4}{2}\un} \| u_0 \|_{M^{2,q}}  t^{-\frac{3}{4}}
  ,
   \llabel{EQ45}  
  \end{equation}
where $q\in (0,3)$.
\end{Lemma}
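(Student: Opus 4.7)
The plan is to mimic the strategy used for Lemma~\ref{L01} but, crucially, to exploit the subtraction built into
\[
q_{\la,x,x_\Om}(z) = q_\la(x'-z',x_3,z_3) - q_\la(x_\Om'-z',x_{\Om,3},z_3)
\]
by a mean--value argument that gains one extra inverse power in the kernel at the cost of the factor $|x-x_\Om|$. Applying the fundamental theorem of calculus to the difference and the pointwise estimate \eqref{EQ23} with $m=1$, I obtain
\[
|q_{\la,x,x_\Om}(z)| \lec |x-x_\Om|\,\sup_{\xi}\frac{\ee^{-|\la|^{1/2}z_3}}{(|\xi'-z'|+\xi_3+z_3)^{3}},
\]
where $\xi$ ranges over the segment joining $x$ and $x_\Om$. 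Because $x,x_\Om\in Q$, \eqref{EQ15} gives $|x-x_\Om|\lec 2^{\on}$; and since the segment stays within a bounded enlargement of $Q$, the geometric fact \eqref{EQ14} still applies at the intermediate point $\xi$.

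Next I would dyadically partition the integration region $\{\chi<1\}$ according to the family $\mathcal{C}_N$. For each $k\geq N$ and each $\tilde Q\in S_k$ lying outside $Q^*$, the bound \eqref{EQ14} yields
\[
(|\xi'-z'|+\xi_3+z_3)^{3}\gec \max(2^{\un},2^k)^{3},
\]
while the Cauchy--Schwarz inequality combined with the definition of $\|u_0\|_{M^{2,q}}$ and the one--dimensional identity $\int_0^\infty \ee^{-2|\la|^{1/2}z_3}\,dz_3\sim |\la|^{-1/2}$ gives
\[
\int_{\tilde Q} \ee^{-|\la|^{1/2}z_3}|u_0(z)|\,dz \lec 2^{k}\,|\la|^{-1/4}\,2^{qk/2}\,\|u_0\|_{M^{2,q}}.
\]
Since each $S_k$ has a bounded number of cubes, I can then sum over $\tilde Q\in S_k$ and split the $k$--sum at $k=\un$. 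For $N\leq k\leq \un$ the denominator contributes the uniform factor $2^{-3\un}$ and the resulting series in $k$ has positive ratio $2^{1+q/2}$, so it is dominated by its top term, producing $2^{(q-4)\un/2}|\la|^{-1/4}\|u_0\|_{M^{2,q}}$. For $k\geq\un+1$ the denominator contributes $2^{-3k}$ and the ratio becomes $2^{(q-4)/2}<1$ (this is where $q<4$, in particular $q<3$, is used), so the tail sum is geometric and again contributes at most $2^{(q-4)\un/2}|\la|^{-1/4}\|u_0\|_{M^{2,q}}$.

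Combining both halves, multiplying by the prefactor $|x-x_\Om|\lec 2^{\on}$, and finally integrating along the contour $\Gamma$ to extract
\[
\int_\Gamma \ee^{t\re\la}|\la|^{-1/4}\,d|\la|\lec t^{-3/4}
\]
exactly as in the proof of Lemma~\ref{L01}, will yield the claimed bound $\|p_{\li,\nonloc}(t)\|_{L^\infty(\Om)}\lec 2^{\on+(q-4)\un/2}\|u_0\|_{M^{2,q}}\,t^{-3/4}$. The main technical obstacle I anticipate is the bookkeeping of the two geometric sums across the threshold $k=\un$, in particular verifying that both halves produce the \emph{same} power $2^{(q-4)\un/2}$ and that the tail converges (which is precisely the constraint $q<4$, with the stated range $q\in(0,3)$ leaving headroom for the analogous pressure pieces considered later). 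A minor secondary point is confirming that \eqref{EQ14} remains valid for every intermediate $\xi\in[x,x_\Om]$ rather than only for the endpoints; this is handled by passing from $Q$ to $Q^*$ if necessary, since the diameter of the segment is already comparable to $2^{\on}$.
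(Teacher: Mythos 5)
Your proposal is correct and follows essentially the same route as the paper: apply the mean--value theorem to $q_{\la,x,x_\Om}$ together with \eqref{EQ23} at $m=1$ to pick up the factor $|x-x_\Om|\lec 2^{\on}$ from \eqref{EQ15}, bound the denominator via \eqref{EQ14} on each dyadic shell $S_k$, estimate $\int_{\tilde Q}\ee^{-|\la|^{1/2}z_3}|u_0|$ by Cauchy--Schwarz and the definition of $M^{2,q}$ to get $2^{(2+q)k/2}|\la|^{-1/4}\|u_0\|_{M^{2,q}}$, split the sum at $k=\un$ and recognize the geometric behavior on both sides (with $q<4$ ensuring convergence of the tail), and finally integrate over $\Gamma$ to produce $t^{-3/4}$. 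The only cosmetic difference is that the paper performs the Cauchy--Schwarz inequality separately in $z_3$ and $z'$ while you do it in one shot on $\tilde Q$; these are equivalent. Your remark that \eqref{EQ14} must be applied at the intermediate point $\xi\in[x,x_\Om]$ rather than at $x$ itself is exactly the small geometric point the paper handles implicitly by stating \eqref{EQ14} for arbitrary $\xi\in Q$.
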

\colb

\begin{proof}[Proof of Lemma~\ref{L02}]
For every $x\in \Om$ we use \eqref{EQ23}, as well as the geometric properties \eqref{EQ14}, \eqref{EQ15}, to obtain
 \begin{equation}
   \begin{split}
\left| \int_{\HH}  q_{\la ,x,x_{\Om} } (z ) (1-\chi ) u_0 (z)  \d z \right|   &\lec 2^{\on}   \int_0^\infty \int_{\RR^2} \frac{\ee^{-|\la |^{\fract{1}{2}} z_3} }{(|\xi'-z'| + \xi_3 +z_3 )^{3}}  |(1-\chi ) u_0 (z)|  \d z' \,\d z_3 \\
&\lec 2^{\on } \left( 2^{-3\un } \sum_{k=N}^{\un }\sum_{\tilde{Q} \in S_k} \int_0^\infty  \ee^{-|\la |^{\fract{1}{2}} z_3 }  \int_{\RR^2} \chi_{\tilde Q}(z) | u_0 (z) |   \d z'\, \d z_3\right. \\
&\left.\hspace{1cm}+ \sum_{k> \un}2^{-3k } \sum_{\tilde{Q} \in S_k} \int_0^\infty \ee^{-|\la |^{\fract{1}{2}} z_3 }  \int_{\RR^2  }\chi_{\tilde Q}(z) | u_0 (z) |  \d z'\, \d z_3\right),
  \end{split}
   \label{EQ46}  
  \end{equation}
 where we write $z=(z',z_3)$ to emphasize the horizontal and vertical components of $z$ and $\xi\in [x,x_{\Om}]$,
with $[x,x_{\Om}]$ denoting the line segment between the points $x$ and $x_{\Om}$. We also used \eqref{EQ15} in the first inequality above and \eqref{EQ14} in the second. We now apply the Cauchy-Schwarz inequality to the $z_3$-integral to obtain
  \[
   \begin{split}
   &\int_0^1 \ee^{-|\la |^{\fract{1}{2}} z_3 }  \int_{\RR^2 } \chi_{\tilde Q}(z) | u_0 (z) |   \d z'\, \d z_3 \leq \| \ee^{-|\la |^{\fract{1}{2}} z_3 }\|_{L_{z_3}^{2} (0,\infty )} \left\|  \int_{\RR^2  } \chi_{\tilde{Q}} (z',z_3) |u_0 (z',z_3 )|  \d z' \right\|_{L_{z_3}^2 (0,\infty)} 
  \\&\indeq
    \lec | \la |^{-\fract{1}{4}}  \left\| u_0 \right\|_{L^2 (\tilde Q)} 2^{k}
  \lec | \la |^{-\fract{1}{4}}   \left\| u_0 \right\|_{M^{2,q}} 2^{\frac{2+q}{2}k}
  \end{split}
  \]
for every $\tilde{Q} \in S_k$, where we applied the Cauchy-Schwarz inequality in the $z'$-integral in the second inequality.  Substituting this into the above estimate gives 
 \[\begin{split}
 \left| \int_{\HH}  q_{\la ,x,x_{\Om} } (z ) (1-\chi ) u_0 (z)  \d z \right| & \lec  2^{\on}| \la |^{-\fract{1}{4}}   \left\| u_0 \right\|_{M^{2,q}}  \left( 2^{-3\un }\sum_{k=N}^{\un }2^{\frac{2+q}{2}k}+  \sum_{k> \un} 2^{\frac{q-4}{2}k}\right) \\
 &\lec 2^{\on+\frac{q-4}{2}\un } | \la |^{-\fract{1}{4}}   \left\| u_0 \right\|_{M^{2,q}},\end{split}
 \]
 from which the lemma follows by integrating in $\lambda$ and noting that $|\la |\geq \kappa$.
\end{proof}

\cole
\begin{Lemma}[Estimate for $p_{\loc,\Helm}$]
\label{L03}
For every $t>0$
  \begin{equation}
  \| p_{\loc,\Helm} (t) \|_{L^{\fract{3}{2}}(\HH)} \leq C \| u (t) \|^{2}_{L^3 (Q^{***})} 
  .
    \llabel{EQ153}  
  \end{equation}
\end{Lemma}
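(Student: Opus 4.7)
The plan is to split $p_{\loc,\Helm}$ according to the representation \eqref{EQ26}, namely
\[
p_{\loc,\Helm}(x,t)=c_0\,\chi_{**}(x)|u(x,t)|^{2}+\mathrm{p.v.}\int_{\HH}\p_{z_i}\p_{z_j}N(x,z)\,\chi_{**}(z)u_i(z,t)u_j(z,t)\,\d z,
\]
and to treat the two summands separately. For the first summand, the cutoff $\chi_{**}$ is supported in $Q^{***}$, so one immediately gets
\[
\|c_0\,\chi_{**}|u|^{2}\|_{L^{3/2}(\HH)}\le c_0\,\||u|^{2}\|_{L^{3/2}(Q^{***})}= c_0\,\|u(t)\|_{L^{3}(Q^{***})}^{2},
\]
which is already of the desired form.

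The second summand is where the real work lies, and the main idea is to reduce it to a Calder\'on--Zygmund estimate on $\RR^{3}$. Using the explicit form \eqref{EQ27}, $N(x,z)=\frac{1}{4\pi}\bigl(|x-z|^{-1}+|\overline{x}-z|^{-1}\bigr)$, split the integral into the two pieces corresponding to $|x-z|^{-1}$ and $|\overline{x}-z|^{-1}$. Extend $f_{ij}\coloneqq \chi_{**}u_iu_j$ by zero to all of $\RR^{3}$; the extension is still supported in $Q^{***}\subset\overline{\HH}$, and by H\"older it satisfies $\|f_{ij}\|_{L^{3/2}(\RR^{3})}\le \|u(t)\|_{L^{3}(Q^{***})}^{2}$. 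The first piece is then exactly a classical second-order Riesz-type operator,
\[
T_{ij}f_{ij}(x)=\mathrm{p.v.}\int_{\RR^{3}}\p_{z_i}\p_{z_j}\tfrac{1}{4\pi|x-z|}\,f_{ij}(z)\,\d z,
\]
which is bounded on $L^{3/2}(\RR^{3})$ by the Calder\'on--Zygmund theorem. For the second piece, perform the change of variable $y=\overline{x}$, which is an isometry sending $\HH$ to the lower half-space $\HH^{-}$; this converts the integral into the same Calder\'on--Zygmund operator applied to $f_{ij}$ but evaluated at $y\in\HH^{-}$, and the $L^{3/2}_x(\HH)$-norm becomes the $L^{3/2}_y(\HH^{-})$-norm of $T_{ij}f_{ij}$, again controlled by $\|f_{ij}\|_{L^{3/2}(\RR^{3})}$.

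Combining the two contributions yields
\[
\|p_{\loc,\Helm}(t)\|_{L^{3/2}(\HH)}\lec \|u(t)\|_{L^{3}(Q^{***})}^{2},
\]
as claimed. The only potential subtlety is justifying the distributional identity that produces the coefficient $c_{0}$ together with the principal-value kernel when one moves the two derivatives from the source term onto the Neumann kernel; this is standard (it is the $3$D analogue of the Calder\'on--Zygmund identity $\p_i\p_j|x|^{-1}=\tfrac{-1}{3}\delta_{ij}\delta_{0}+\text{p.v.\ kernel}$ up to constants) and is exactly what is recorded in the definition \eqref{EQ26}, so no new argument is required beyond invoking it. The main (and essentially only) obstacle is keeping track of the reflection in the Neumann kernel, which is handled cleanly by the isometry $x\mapsto\overline{x}$; everything else is a direct application of H\"older's inequality and the $L^{p}$-boundedness of standard singular integrals.
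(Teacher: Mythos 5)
Your argument is correct and follows essentially the same route the paper takes, which is to apply the Calder\'on--Zygmund estimate to each of the two components of the Neumann kernel in \eqref{EQ27}; you simply spell out the details (the trivial estimate for the local term $c_0\chi_{**}|u|^2$, the extension by zero, and the isometry $x\mapsto\overline{x}$ for the reflected piece) that the paper leaves implicit in its one-sentence proof.
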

\colb

\begin{proof}[Proof of Lemma~\ref{L03}]
This follows directly from the Calder\'on-Zygmund estimate applied for each of the two components of the Neumann kernel~\eqref{EQ27}.
\end{proof}

\cole
\begin{Lemma}[Estimate for $p_{\loc,\harm}$]\label{L04}
There exists a function $\theta$ depending only on $t$ such that
  \begin{equation}
   \begin{split}
     & \| p_{\loc,\harm} -\theta\|_{L^{\fract{3}{2}} ((0,t); L^{\fract{17}{10}} (\HH))} 
      \\&\indeq
      \lec  2^{q\on } \left(
       \| \alpha \|_{L^{\fract{39}{5}}(0,t)}^{\fract{13}{34}} \beta(t)^{\fract{21}{34}}+ 2^{-\fract{4}{17}\un}\| \alpha \|_{L^3 (0,t)}^{\fract12}   \beta(t)^{\fract12}+2^{-\fract{21}{17}\un} \| \alpha \|_{L^{\fract32}(0,t)}   \right)
  \end{split}
    \llabel{EQ154}  
  \end{equation}
for all $q>0$, where $\alpha$ and $\beta$ are defined in \eqref{EQ08}.
\end{Lemma}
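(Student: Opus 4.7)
The plan is to split $p_{\loc,\harm}$ according to the decomposition $F_A+F_B$ in \eqref{EQ32} and to produce the three terms in the stated bound from three distinct spatial interpolations, each paired with a matching H\"older estimate in time. The $F_B$ contribution will be reduced to the $F_A$ one using the $L^p$-boundedness of the zero-order horizontal multiplier $m(D')$ together with standard estimates on the 2D Poisson kernel appearing in \eqref{EQ30}, so that the whole analysis is driven by the localized quantity $\chi_{**}u\otimes u$.

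For the $F_A$ part, each summand has the divergence form $\p_j(\chi_{**} u_k u_l)$ by \eqref{EQ29}, and I would integrate by parts in $z_j$ in the representation \eqref{EQ32}. Since $u$ vanishes on $\p\HH$ and $\chi_{**}$ is compactly supported, no boundary contribution survives; any $x$-independent remainder that arises is absorbed into $\theta(t)$. This reduces the problem to estimating a convolution-type integral against the kernel $\p_{z_j}q_\la$, for which the pointwise bound \eqref{EQ23} with $m=1$ yields
$$|\p_{z_j}q_\la(x'-z',x_3,z_3)|\lec \ee^{-|\la|^{1/2}z_3}(|x'-z'|+x_3+z_3)^{-3}.$$

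Next I would bound $\|p_{\loc,\harm}(s)-\theta(s)\|_{L^{17/10}(\HH)}$ in three ways, one for each term of the right-hand side. The first uses Gagliardo--Nirenberg on the half-space, $\|u\|_{L^{17/5}}\lec \|u\|_{L^2}^{13/34}\|\na u\|_{L^2}^{21/34}$, to estimate $\|u\otimes u\|_{L^{17/10}(Q^{**})}=\|u\|_{L^{17/5}(Q^{**})}^2$, and gives the $\alpha^{13/34}\beta^{21/34}$ structure with no boundary gain. The second uses a near-boundary slab, in which the exponential factor $\ee^{-|\la|^{1/2}z_3}$ combined with the Dirichlet condition lets one trade one factor of $u$ for $\na u$ via Poincar\'e in the $x_3$-direction, yielding $\|u\|_{L^2}\|\na u\|_{L^2}$ together with the geometric decay $2^{-(4/17)\un}$. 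The third uses only $\|u\|_{L^2}^2$, exploiting the sharper geometric decay $2^{-(21/17)\un}$ from \eqref{EQ14} in the region $|x'-z'|+x_3+z_3\gec 2^{\un}$. In every case the local $L^2$-norm on $Q^{**}$ is converted to the $M^{2,q}$ norm via $|Q^{**}|^{q/3}\lec 2^{q\on}$, which produces the common prefactor $2^{q\on}$.

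Finally, the $\la$-integral along $\Gamma$ in \eqref{EQ32}, after a contour deformation as in \cite{MMP1,MMP2}, produces a kernel in $t-s$ that is integrable in time. Young's inequality in time followed by H\"older in time with the conjugate pairs $(68/5,68/63)$, $(4,4/3)$, and $(\infty,1)$ in the three regimes above, together with the monotonicity of $\alpha$ and $\beta$ used to freeze $\beta$ at the endpoint $t$, delivers precisely the three terms $\|\alpha\|_{L^{39/5}}^{13/34}\beta^{21/34}$, $2^{-(4/17)\un}\|\alpha\|_{L^{3}}^{1/2}\beta^{1/2}$, and $2^{-(21/17)\un}\|\alpha\|_{L^{3/2}}$. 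The hardest part will be this temporal-$\la$ bookkeeping: the contour integral must be organized so that the kernel in $t-s$ is integrable while the exponential factor $\ee^{-|\la|^{1/2}z_3}$ is retained long enough to support the Poincar\'e gain in the second regime. A secondary difficulty is extracting the exact geometric decays $2^{-(4/17)\un}$ and $2^{-(21/17)\un}$ from the cutoff geometry \eqref{EQ14}, and choosing the correct $\theta(t)$ so that the $x$-independent piece produced by the integration by parts does not spoil the $L^{17/10}$ integrability of the remaining kernel.
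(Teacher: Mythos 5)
Your proposal takes a genuinely different, kernel-based route, while the paper's proof bypasses the representation \eqref{EQ32} entirely: it uses the Poincar\'e--Sobolev--Wirtinger inequality to reduce $\|p_{\loc,\harm}-\theta\|_{L^{17/10}(\HH)}$ to $\|\nabla p_{\loc,\harm}\|_{L^{51/47}(\HH)}$ (this is exactly where $\theta(t)$ comes from), then invokes maximal $L^p_t L^q_x$ regularity for the Stokes system in the half-space (\cite{SvW,GS}) to bound $\nabla p_{\loc,\harm}$ directly by $\|\mathbb{P}\nabla\cdot(\chi_{**}u\otimes u)\|_{L^{3/2}_t L^{51/47}_x}$, and finishes with H\"older, Gagliardo--Nirenberg on $Q^{***}$, and H\"older in time. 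The three terms on the right are then just the leading GN term, the GN lower-order (Poincar\'e-type) correction, and the $\nabla\chi_{**}$ cross-terms; no contour bookkeeping is needed.

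Your plan has concrete gaps that would have to be repaired before it could stand on its own. First, the presence of $\theta(t)$ is attributed to an "$x$-independent remainder" from integration by parts; integration by parts in $z_j$ produces no such remainder, so this does not explain $\theta$ at all. In the paper $\theta$ arises because one controls $\nabla p_{\loc,\harm}$ rather than $p_{\loc,\harm}$ itself, and Poincar\'e--Wirtinger supplies the constant. Second, after integrating by parts you invoke the pointwise estimate \eqref{EQ23} with $m=1$ for $\partial_{z_j}q_\lambda$, but \eqref{EQ23} controls $\nabla_x^m q_\lambda$; for $j=1,2$ this transfers by $\partial_{z'}=-\partial_{x'}$, but $\partial_{z_3}q_\lambda$ introduces a factor $\omega_\lambda(\xi)=\sqrt{\lambda+|\xi|^2}$ in the Fourier representation and is not covered by \eqref{EQ23}. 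Since $F_A$ in \eqref{EQ29} includes $j=3$ terms, this step would fail without a separate kernel estimate. Third, the mechanism you propose for the middle term $2^{-4\un/17}\|\alpha\|_{L^3}^{1/2}\beta^{1/2}$ --- "trading one factor of $u$ for $\nabla u$ via Poincar\'e in $x_3$" using the exponential factor $\ee^{-|\lambda|^{1/2}z_3}$ --- is a heuristic that does not line up with how these terms actually arise; in the paper the $2^{-\fract{4}{17}\un}$ and $2^{-\fract{21}{17}\un}$ factors come from the lower-order Poincar\'e term in Gagliardo--Nirenberg on $Q^{***}$ and from $\nabla\chi_{**}$, not from a near-boundary slab argument. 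Finally, you yourself flag the $\lambda$-contour bookkeeping as unresolved, and it is precisely what the paper's maximal-regularity shortcut eliminates; without that shortcut, the contour analysis would need to be carried out explicitly, and the gaps above make it unlikely to close as written.
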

\colb

Here we briefly comment why we estimate $p_{\loc,\harm}$ in $L^{\fractext{3}{2}}_t L_x^{17/10}$. 
We are interested in estimating a term of the form $\int_Q u p_{\loc,\harm}$ (see Lemma~\ref{L09} below) for a given cube $Q\in \mathcal{C}_N$, for which we can use a bound of the form $\| u \|_{L^{\fractext{3}{2}}_t L^{r'}} \| p_{\loc,\harm}-\theta \|_{L^{3/2}_t L^{r}}$, where $r'$ is the conjugate exponent to $r$. The borderline value of $r$ is $9/5$ as then one can obtain $ \| p_{\loc,\harm } - \theta \|_{L^{\fractext32}_t L^{\fractext{9}{5}}_x} \lec \| \na p_{\loc,\harm }  \|_{L^{\fractext32}_t L^{\fractext{9}{8}}_x} \lec |Q|^{\fractext{q}{3}} \alpha(t)^{2/3} \beta (t)^{2/3} $ (by considering the leading order term only), but since this gives the $L^\infty$ norm of $\alpha$ such an estimate makes it impossible to use an ODE-type argument in the a priori bound. Taking $r<9/5$ replaces the $L^\infty$ norm with a high $L^p$ norm, which makes it possible to use an ODE-type argument, but $r$ also cannot be too low. For example taking $r=8/5$ one can similarly obtain, up to leading order, $ \| p_{\loc,\harm } - \theta \|_{L^{\fractext32}_t L^{\fractext{8}{5}}_x} \lec \| \na p_{\loc,\harm }  \|_{L^{\fractext32}_t L^{\fractext{24}{23}}_x} \lec |Q|^{\fract{q}{3}} \| \alpha \|_{L^{21/5} (0,t)}^{7/16} \beta (t)^{9/16} $, while a Gagliardo-Nirenberg-Sobolev argument for $u$ gives $\| u \|_{L^{3}_t L^{\fractext{8}{3}}_x} \lec |Q|^{q/6}  \| \alpha \|_{L^{15/7} (0,t)}^{15/16} \beta (t)^{9/16}$. In this case the total power of $\beta$ is $9/8>1$, which makes it impossible to absorb it by the dissipation term on the left-hand side of the local energy inequality. It turns out that the choice $r=17/10$ is optimal in this context, as it settles both issues.

\begin{proof}[Proof of Lemma~\ref{L04}]
By the Poincar\'e-Sobolev-Wirtinger inequality 
(cf.~Theorem~II.6.1 in \cite{G})
we have, with $\theta$ depending only on $t$,
  \begin{equation}\| p_{\loc,\harm }(t) - \theta(t) \|_{L^{\fract{17}{10}}(\HH)} \lec \| \na p_{\loc,\harm} (t) \|_{L^{\fract{51}{47}} (\HH)} ,
    \llabel{EQ152}  
  \end{equation}
and thus, using maximal regularity result for half-space (\cite{SvW,GS}),
  \begin{equation}   \llabel{EQ47}  
  \begin{split}
  &\| p_{\loc,\harm } - \theta \|^{\fract{3}{2}}_{L^{\fract32} ((0,t);L^{\fract{17}{10}}(\HH))} 
      \lec \| \mathbb{P} \na \cdot (\chi_{**} u\otimes u ) \|^{\fract32}_{L^{\fract32} ((0,t);L^{\fract{51}{47}}(\HH))} 
  \\&\indeq
     \lec  \int_0^t  \| u (s) \|_{L^{\fract{102}{43}}(Q^{***})}^{\fract32} \|\na (\chi_{**} u )(s) \|_{L^2}^{\fract32}   \d s
    \\&\indeq
      \lec  \int_0^t  \left( \| u (s) \|_{L^{2}(Q^{***})}^{\fract{39}{34}}  \|\na (\chi_{**} u )(s) \|_{L^2}^{\fract{63}{34}} + 2^{-\fract{6}{17}\un } \|  u (s) \|_{L^2(Q^{***})}^{\fract32}  \|\na (\chi_{**} u )(s) \|_{L^2}^{\fract32} \right) \d s
   \\&\indeq
     \lec  \int_0^t  \biggl( \| u (s) \|_{L^{2}(Q^{***})}^{\fract{39}{34}}  \|\na  u (s) \|_{L^2(Q^{***})}^{\fract{63}{34}}+2^{-\fract{6}{17}\un } \| u (s) \|_{L^2(Q^{***})}^{\fract{3}{2}}  \|\na  u (s) \|_{L^2(Q^{***})}^{\fract{3}{2}} 
   \\&\indeq\indeq\indeq\indeq\indeq\indeq\indeq\indeq\indeq\indeq\indeq\indeq\indeq\indeq\indeq\indeq\indeq\indeq\indeq\indeq\indeq\indeq
  + 2^{-\fract{63}{34}\un } \|  u (s) \|_{L^2(Q^{***})}^{3}  \biggr) \d s\\
  &\indeq\lec 2^{\fract{3q}{2}\on} 
  \biggl(
  \left( \int_0^t \alpha(s)^{\fract{39}{5}} \d s \right)^{\fract{5}{68}}\beta(t)^{\fract{63}{68}}
    + 2^{-\fract{6}{17}\un }\left( \int_0^t \alpha(s)^{3} \d s \right)^{\fract14}  \beta(t)^{\fract34}
   \\&\indeq\indeq\indeq\indeq\indeq\indeq\indeq\indeq\indeq\indeq\indeq\indeq\indeq\indeq\indeq\indeq\indeq\indeq\indeq\indeq\indeq\indeq
   +2^{-\fract{63}{34}\un } \int_0^t \alpha(s)^{\fract32} \d s  \biggr)
  ,
  \end{split} 
  \end{equation}
and the proof is concluded, where we used the fact that $2^{3\un } \lec |Q^{***}| \lec 2^{3\on}$.
\end{proof}

\cole
\begin{Lemma}[Estimate for $p_{\nonloc,\Helm}$]\label{L05}
We have
  \begin{equation}
  \| p_{\nonloc,\Helm} (t) \|_{L^{\infty}(\Om )} 
    \lec  2^{\on + (q-4)\un } \|u(t) \|_{M^{2,q}}^2  
  ,
    \llabel{EQ48}  
  \end{equation}
for every $t>0$ and $q\in (0,4)$.
\end{Lemma}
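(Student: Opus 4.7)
The plan is to adapt the strategy from the proof of Lemma~\ref{L02} to the Neumann kernel, the new ingredient being an application of the Mean Value Theorem to the difference $N(x,z) - N(x_\Om,z)$. Unlike in Lemma~\ref{L02}, the integrand is already $|u|^2$, so no Cauchy-Schwarz step on the data side is needed.

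Concretely, I would first use the Mean Value Theorem to write
\[
\p_{z_i}\p_{z_j} N_{x,x_\Om}(z) = \int_0^1 (x-x_\Om) \cdot (\na_y \p_{z_i}\p_{z_j} N)\bigl(x_\Om + s(x-x_\Om),\, z\bigr) \d s,
\]
bound $|x-x_\Om| \lec 2^{\on}$ via \eqref{EQ15}, and differentiate \eqref{EQ27} to obtain $|\na_y \p_{z_i}\p_{z_j} N(y,z)| \lec |y-z|^{-4} + |\overline y - z|^{-4}$. Next, I would decompose $\HH \setminus Q^{**} = \bigcup_{k \geq N}\bigcup_{\tilde Q \in S_k,\, \tilde Q \cap Q^{**} = \emptyset} \tilde Q$ and, on each $\tilde Q$, combine a geometric lower bound analogous to \eqref{EQ14} for $\xi_s \in [x_\Om, x]$, namely $|\xi_s - z|, |\overline{\xi_s} - z| \gec 2^{\un}$ if $k \leq \un$ and $\gec 2^k$ if $k \geq \un+1$, with the $M^{2,q}$ bound $\int_{\tilde Q} |u(t)|^2 \d z \leq 2^{qk} \|u(t)\|_{M^{2,q}}^2$. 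Since $|S_k|$ is uniformly bounded in $k$, summing yields
\[
|p_{\nonloc,\Helm}(x,t)| \lec 2^{\on}\|u(t)\|_{M^{2,q}}^2 \left( 2^{-4\un} \sum_{k = N}^{\un} 2^{qk} + \sum_{k > \un} 2^{(q-4)k}\right).
\]
For $q \in (0,4)$, both sums are dominated by $2^{(q-4)\un}$ (the first by its top term, the second as a convergent geometric series), producing the desired bound.

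The main technical point is justifying the geometric lower bound for $|\xi_s - z|$ when $\xi_s$ ranges over the segment $[x_\Om, x]$ rather than strictly over $Q$ as in \eqref{EQ14}. Since this segment lies in the convex hull of $Q$ (a region of diameter $\lec 2^{\on}$) while $z$ is separated from $Q^{**}$ by at least one cube of $\mathcal{C}_N$ at the relevant scale, the same case analysis that proves \eqref{EQ14} goes through with possibly adjusted implicit constants. For the reflected part the estimate $|\overline{\xi_s} - z| \gec |\xi_s' - z'| + (\xi_s)_3 + z_3$, valid for $\xi_s, z \in \HH$, transfers the lower bound from \eqref{EQ14} directly, so no separate argument is needed there.
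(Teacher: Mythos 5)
Your proposal is correct and takes essentially the same route as the paper's proof: the paper likewise applies a mean value theorem bound (written compactly as $|\partial_{z_i}\partial_{z_j}N_{x,x_\Om}(z)| \lesssim |x-x_\Om|/|x-z|^4 \lesssim 2^{\overline{n}}/|x-z|^4$), decomposes $\HH\setminus Q^{**}$ into the cubes $\tilde Q\in S_k$, applies the $M^{2,q}$ bound on each cube, and sums the resulting geometric series for $q<4$. Your explicit handling of the intermediate point $\xi_s$ on the segment $[x_\Om,x]$ and of the reflected kernel $|\overline{y}-z|^{-4}$ spells out details the paper leaves implicit, but the underlying argument is the same.
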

\colb

\begin{proof}[Proof of Lemma~\ref{L05}]
We omit the $t$ variable in the notation. Recall that $Q^*$ is the union of the neighbors of $Q$, which is a cover $\Om$ using cubes from $\mathcal{C}_N$. For $x\in Q^*$
and $z\not \in Q^{**}$ we have
  \begin{equation}
  |\partial_{z_i}\partial_{z_j} N_{x,x_{\Om }} (z) | \lec \frac{|x-x_{\Om }|}{|x-z|^4}\lec\frac{2^{\on } }{|x-z|^4}\lec 2^{\on }  \begin{cases} 2^{-4\un }  &\quad z\in \tilde Q \in S_k, k\leq \un ,\\
   2^{-4k } &\quad z\in \tilde Q \in S_k, k\geq \un+1 ,
   \end{cases}
    \llabel{EQ49}   
  \end{equation}
as in \eqref{EQ14}, \eqref{EQ15}. Thus, for such $x$, \eqref{EQ34} gives
  \begin{equation}\begin{split}
  |p_{\nonloc,\Helm}(x) | &\lec 2^{\on } \sum_{k\geq N} \sum_{\tilde Q\in S_k, \tilde Q\not \subset Q^{**}} \int_{\tilde Q} \frac{|u(z)|^2  }{|x-z|^4} \d z 
 \\&
  \lec 2^{\on } \left( 2^{-4\un } \sum_{k= N}^{\un} \sum_{\tilde Q\in S_k}  \int_{\tilde Q} |u|^2 + \sum_{k\geq \un+1} \sum_{\tilde Q\in S_k} 2^{-4k} \int_{\tilde Q} |u|^2 \right)
   \\&
  \lec\|u \|_{M^{2,q}}^2  2^{\on} \left( 2^{-4\un } \sum_{k= N}^{\un }\sum_{\tilde Q\in S_k}  2^{qk}  + \sum_{k\geq \un +1} \sum_{\tilde Q\in S_k} 2^{(q-4)k}  \right)\lec \|u \|_{M^{2,q}}^2 2^{\on +(q-4)\un} 
   \end{split}
   \llabel{EQ50}  
  \end{equation}
for any $q<4$.
\end{proof}

\cole
\begin{Lemma}[Estimate for $p_{\harm,\geq 1}$]\label{L06}
Let $T>0$. For every $r\in [1,\infty )$, $q\in (0,3)$, $\gamma \in (0,1)$ and $\delta \in (0, \min\{1/r,3-3q/4,3q/2\})$,
  \begin{equation}\begin{split}
  \| p_{\harm, \geq 1} \|_{L^r ((0,T);L^\infty (\Om ))} &\lec_{r,q,\delta,\gamma  }  2^{\on +(q-4)\un  } (1+T)^{\gamma} 
  \\&\indeqtimes
   \left( 2^{\frac{4\delta}3\un }\| \alpha \|_{L^{\frac{r(1-\delta )}{1-r\delta }}(0,T)}^{1-\delta }  \beta(T)^{\delta } +(1+T^{\frac12} 2^{\on q -(1+q)\un} ) \| \alpha \|_{L^r (0,T)}  \right) 
   ,
   \end{split}
    \llabel{EQ51}  
  \end{equation}
where $\alpha$ and $\beta$ are defined in \eqref{EQ08}.
\end{Lemma}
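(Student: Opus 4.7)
Following the patterns of Lemma~\ref{L02} and Lemma~\ref{L05}, I would first use the pointwise relation $\chi_*\leq \chi_{**}$, which implies $(1-\chi_*)(1-\chi_{**})=(1-\chi_{**})$, to reduce the integrand in \eqref{EQ35} for $p_{\harm,\geq 1}$ to $q_{\la,x,x_\Om}(z)\cdot(1-\chi_{**})(F_A+F_B)(z,s)$. The contributions of $F_A$ and $F_B$ are then handled in parallel by combining the pointwise kernel estimates \eqref{EQ23} with the mean-value structure of the difference $q_{\la,x,x_\Om}(z) = q_\la(x'-z',x_3,z_3) - q_\la(x_\Om'-z',x_{\Om,3},z_3)$, which supplies the factor $|x-x_\Om|\lec 2^{\on}$, together with the geometric facts \eqref{EQ14}--\eqref{EQ15} and the dyadic decomposition of $\HH$ into cubes from $\mathcal{C}_N$; this is what produces the common prefactor $2^{\on+(q-4)\un}$, with the geometric sum over $k\geq\un+1$ converging since $q<3<4$.

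For the $F_A$ contribution, each term has the form $\partial_{z_j}((1-\chi_{**})u_ku_l)$, so I would integrate by parts in $z_j$ to transfer the derivative onto $q_{\la,x,x_\Om}$; the boundary term at $\{z_3=0\}$ vanishes by the no-slip condition on $u$. Applying \eqref{EQ23} with $m=2$ together with the mean-value factor gives $|\na_z q_{\la,x,x_\Om}(z)|\lec 2^{\on}\ee^{-|\la|^{1/2}z_3}/(|\xi'-z'|+\xi_3+z_3)^4$ for some $\xi\in[x,x_\Om]$, and, after the dyadic sum, each $\tilde Q\in S_k$ contributes $\int_{\tilde Q}|u(s)|^2\leq 2^{qk}\alpha(s)$. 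Cauchy--Schwarz in $z_3$ extracts $|\la|^{-1/4}$; integration along $\Gamma$ against $\ee^{\la(t-s)}$ produces a $(t-s)^{-\sigma}$-type kernel plus an $O(1)$ contribution from the arc $\{|\la|=\kappa\}$ (which will be absorbed into $(1+T)^\gamma$); Young's inequality in $s$ then yields the $T^{1/2}\cdot 2^{\on q-(1+q)\un}\|\alpha\|_{L^r}$ portion of the second summand.

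For $F_B$, no integration by parts is performed because $F_B$ already contains two horizontal derivatives; instead I would view $F_B$ as the image of $(1-\chi_{**})u\otimes u$ under the composition of the $0$-order multiplier $m(D')$ (a Calder\'on--Zygmund operator in the horizontal variable) with a horizontal convolution by the 2D Poisson kernel $P$, both of which are bounded on Lebesgue spaces and transfer to each dyadic cube with algebraic decay in the cube's separation from $Q$. Pairing this with the raw bound \eqref{EQ23} at $m=0$, namely $|q_{\la,x,x_\Om}(z)|\lec 2^{\on}\ee^{-|\la|^{1/2}z_3}/(\dots)^3$, and rerunning the dyadic decomposition, the integrand acquires a factor $\|u(s)\|_{L^p(\tilde Q)}^2$ for a suitable $p>2$. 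A Gagliardo--Nirenberg interpolation $\|u\|_{L^p(\tilde Q)}\lec |\tilde Q|^{-\theta/3}\|u\|_{L^2(\tilde Q)}^{1-\theta}\|\na u\|_{L^2(\tilde Q)}^{\theta}$ with $\theta=\theta(\delta)$ then converts the gradient factor into $\beta^\delta$, while a H\"older--Young step in time produces the $\|\alpha\|_{L^{r(1-\delta)/(1-r\delta)}}^{1-\delta}$ factor, giving the first summand $2^{4\delta\un/3}\|\alpha\|^{1-\delta}\beta^\delta$.

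The principal obstacle will be simultaneously maintaining the three constraints $\delta<1/r$ (arising from Young's inequality in the $s$-convolution), $\delta<3-3q/4$ (convergence of the dyadic geometric sum after the interpolation), and $\delta<3q/2$ (Sobolev admissibility of the Gagliardo--Nirenberg step), while precisely tracking the exponential weight $\ee^{-|\la|^{1/2}z_3}$ against the $\la$-contour integral so that the exact exponents $2^{4\delta\un/3}$ and $r(1-\delta)/(1-r\delta)$ appearing in the statement emerge; keeping all prefactors sharp across these three interlocking choices is the central technical task.
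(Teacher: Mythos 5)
Your high-level ingredients are right (mean-value structure of $q_{\la,x,x_\Om}$ with \eqref{EQ23}, dyadic decomposition via \eqref{EQ14}--\eqref{EQ15}, Gagliardo--Nirenberg to produce $\beta^\delta$, Young in the time convolution), but you have assigned them to the wrong halves of the decomposition $p_{\harm,\geq1}=p_\AAA+p_\BB$. In the paper's argument, the interpolation term $2^{\frac{4\delta}{3}\un}\|\alpha\|^{1-\delta}_{L^{r(1-\delta)/(1-r\delta)}}\beta^\delta$ is produced by the $F_A$ part: after integrating by parts (as you describe), the $z_3<1$ contribution is handled with a H\"older split in $z_3$ (giving a $z_3^{-2/a'}$ weight, not a simple Cauchy--Schwarz $|\la|^{-1/4}$), followed by Gagliardo--Nirenberg on each cube (see \eqref{EQ62}), which is where $\beta^\delta$ and the constraint structure on $\delta$ come from. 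By contrast, the paper's $F_B$ part is handled without any interpolation at all: Step~2 proves a pointwise bound $\|f_B\|_{L^\infty(\tilde Q)}\lec|\tilde Q|^{(q-2)/3}\|u\|_{M^{2,q}}^2$, and Step~3 integrates by parts \emph{twice} (moving $\nabla'\otimes\nabla'$ from $f_B$ onto $q_{\la,x,x_\Om}$ and the cutoff), then sums the $L^\infty$ bound over scales $k\in[\un-2,\on+2]$ on $\supp\nabla\chi_*$ to get $\sum 2^{qk}\lec 2^{q\on}$ --- that geometric sum is precisely the origin of the $T^{1/2}2^{\on q-(1+q)\un}\|\alpha\|_{L^r}$ term, which you instead attribute to $F_A$.

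This swap is not cosmetic: the $2^{\on q}$ gain is specific to $F_B$ because $f_B$ is a \emph{nonlocal} singular integral of $u\otimes u$, so its $L^\infty$ norm on a single cube already sees all of $u$ and costs a full $\|u\|_{M^{2,q}}^2$ per cube; for $F_A$ the integrand is local and each cube only contributes $\|u\|_{L^2(\tilde Q)}^2$, so no such factor appears. Likewise your stated claim that "no integration by parts is performed" for $F_B$ contradicts the paper's Step~3, which integrates by parts in the horizontal variables; without it, the $\nabla'\otimes\nabla'$ makes $F_B$ unbounded and the $L^\infty$ kernel bound does not apply. Your attributions of the three constraints on $\delta$ are also partly off (in the paper $\delta<1/r$ comes from the H\"older step $3r<2b$, and $\delta<3q/2$ from the finite dyadic sum over $k\le\un$, not the Sobolev admissibility). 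The observation $(1-\chi_*)(1-\chi_{**})=1-\chi_{**}$ is valid and would in fact show the boundary-of-cutoff term $I_3$ in the paper's Step~1 vanishes, a harmless simplification, but it does not repair the main structural error.
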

\colb 

\begin{proof}[Proof of Lemma~\ref{L06}]
 Recall that by \eqref{EQ35} we have
  \begin{equation}\begin{split}
   p_{\harm,\geq 1}(x,t) 
      &= \frac{1}{2\pi i} \int_0^t \int_\Gamma \ee^{\la (t-s)} \int_{\HH} q_{\la, x,x_{\Om}} (z)  (1-\chi_*)( F_A(z,s)+F_B(z,s) ) \d z \, \d \la \, \d s \\
   &= p_{\AAA}(x,t)+p_{\BB}(x,t).
  \end{split}
    \llabel{EQ52}
  \end{equation}
In Step~1 below we provide an estimate for $p_A$. Next, in Step~2 we show that $\| f_B \|_{L^\infty (\Om)} \lec |Q|^{(q-2)/3} \| u \|_{M^{2,q}}^2 $ for every $Q\in \mathcal{C}_N$, where $F_B$ is a sum of terms of the form $\nabla'\otimes \nabla' f_B$ where
\eqnb
 m(D') \na' \otimes \na' \int_0^\infty  \left( \left( P(\cdot,|z_3 -y_3|) + P(\cdot,z_3+y_3) \right)\ast ((1-\chi_{**}) v\otimes w ) (y_3) \right)(z',s) \d y_3  =: \nabla'\otimes \nabla' f_B
   \llabel{EQ157}
\eqne
where $v,w$ denote $2D$ vectors whose components are chosen among $u_1,u_2,u_3$ (recall \eqref{EQ30}). We then use this estimate in Step~3 to prove the required estimate on $p_\BB$. \\

\noindent \emph{Step~1.} We show the required estimate for $p_{\AAA}$.\\

For $x\in \Om$, we have, using \eqref{EQ23} with $m=2$, as well as \eqref{EQ14} and \eqref{EQ15},
  \begin{equation}
   \begin{split}
   &\left| \int_{\HH}  q_{\la ,x,x_{\Om } } (z ) (1-\chi_* )F_A (z)  \d z \right| 
   \\&\indeq
   \lec 2^{\on }  \int_{\HH} \frac{\ee^{-|\la |^{\fract{1}{2}} z_3} }{(|\xi'-z'| + \xi_3 +z_3 )^{4}}  |(1-\chi_{*} ) u\otimes u (z)|  \d z  
    +2^{-\un }  \int_{\supp \nabla\chi_*} |q_{\la ,x,x_{\Om }} (z)|  |u (z)|^2  \d z 
        \\&\indeq
          = 2^{\on } \int_1^{\infty} \int_{\RR^2} 
                 \frac{\ee^{-|\la |^{\fract{1}{2}} z_3} }{(|\xi'-z'| + \xi_3 +z_3 )^{4}}  |(1-\chi_{*} ) u\otimes u (z)|  
                 \d z' \,\d z_3 
    \\&\indeq\indeq
         +   2^{\on }\int_0^1 \int_{\RR^2}
              \frac{\ee^{-|\la |^{\fract{1}{2}} z_3} }{(|\xi'-z'| + \xi_3 +z_3 )^{4}}  |(1-\chi_{*} ) u\otimes u (z)|  
         \d z' \,\d z_3 
       \\&\indeq\indeq
      + 2^{-\un } \int_{\supp \nabla\chi_*} |q_{\la ,x,x_Q} (z)|  |u (z)|^2  \d z
     \\&\indeq
      = 2^{\on } (I_1+I_2 +I_3),
   \end{split}
   \llabel{EQ53}
   \end{equation}
   where $\xi$ is a point on the line segment joining $x$ and $x_{\Om }$. We denote the corresponding (pointwise) bound on $|p_A|$ by $p_{\AAA1}+p_{\AAA2}+p_{\AAA3}$,
i.e.,
  \begin{align}
   \begin{split}
    p_{\AAA j}(x,t)
     &=
    \frac{2^{\on }}{2\pi } \int_0^t \int_\Gamma \ee^{-(t-s)\re\lambda} I_j  \d|\lambda| \d s
    ,
   \end{split}
   \label{EQ54}
  \end{align}
for $j=1,2,3$.

For $p_{\AAA1}$ we observe that $\ee^{-|\la |^{\fract{1}{2}} z_3} \leq \ee^{-|\la |^{\fract{1}{2}}}\lec_\gamma |\lambda|^{-\gamma }$ and use \eqref{EQ14} to obtain 
\[
\begin{split}
I_1 &\lec |\la|^{-\gamma } \left( 2^{-4\un}\sum_{k= N}^{\un} \sum_{\tilde Q \in S_k }   \int_{\tilde Q} |u|^2 +\sum_{k\geq \un +1} \sum_{\tilde Q \in S_k } 2^{-4k}  \int_{\tilde Q} | u |^2 \right) \\
&\lec  |\la|^{-\gamma } \| u \|_{M^{2,q}}^2 \left( 2^{-4\un}\sum_{k= N}^{\un} 2^{qk} +\sum_{k\geq \un +1}  2^{(q-4)k}  \right) \\
&\lec  2^{(q-4)\un }|\la|^{-\gamma } \| u \|_{M^{2,q}}^2 .
\end{split}
\]
Thus \eqref{EQ54} gives 
  \begin{equation}\begin{split}
  \| p_{\AAA1} (t) \|_{L^\infty (\Om)} &\lec_\gamma  2^{\on + (q-4)\un} \sup_{s\in [0,t]} \| u(s) \|_{M^{2,q}}^2 \int_0^t (t-s)^{\gamma -1 } \d s\\
  &\lec 2^{\on + (q-4)\un}  \alpha(t) t^\gamma , 
  \end{split}
    \llabel{EQ55}  
  \end{equation}
  for every $t\in [0,T]$, as required.

Next, we bound $p_{\AAA2}$.
Applying H\"older's inequality, we obtain
  \begin{equation}
   \int_\Ga \ee^{(t-s)\re\la } {\ee^{-|\la |^{\fract{1}{2}} z_3} } \d \la \lec_{p,\Gamma } (t-s)^{-\fract1{a}} z_3^{-\fract2{a'}}
   ,
    \llabel{EQ56}  
  \end{equation}
for $1/a+1/a'=1$.
Therefore,  using Tonelli's theorem
  \eqnb\label{EQ57}
  \begin{split}
  |p_{\AAA2}(x,t)| &\leq 2^{\on }  \int_0^t \int_\Gamma \int_0^1 \int_{\RR^2} \ee^{(t-s)\re\la }  \frac{\ee^{-|\la |^{\fract{1}{2}} z_3} }{(|\xi'-z'| + \xi_3 +z_3 )^{4}}  (1-\chi_* )| u\otimes u (z)|   \d z'\, \d z_3 \, \d \la \,\d s  \\
  &\lec_{p}  2^{\on }  \int_0^t (t-s)^{-\fract{1}{a}}  \int_{\RR^2} \int_0^1  \frac{ z_3^{-\fract{2}{a'}}}{(|\xi'-z'| + \xi_3 +z_3 )^{4}}  (1-\chi_* ) |u\otimes u (z)|   \d z_3\, \d z'  \,\d s  
  .
  \end{split}
  \eqne
We write $\RR^2 = \bigcup_{k\geq N} \bigcup_{\tilde Q'\in S_k'} \tilde Q'$, where $\tilde Q'$ denotes the projection of $\tilde Q$ onto $\RR^2$ and $S_k'$ denotes the collection of projections onto $\p\HH$ of the cubes from $S_k$ that touch $\p \HH$. We also set
  \begin{equation}
  p_{\AAA2,\tilde Q}(x,t) = \int_0^t (t-s)^{-\fract{1}{a}}  \int_{\tilde Q' } \int_0^1  \frac{ z_3^{-\fract{2}{a'}}}{(|\xi'-z'| + \xi_3 +z_3 )^{4}}  (1-\chi_* ) |u\otimes u (z)|   \d z_3\, \d z'  \,\d s  ,
    \llabel{EQ58}  
  \end{equation}
so that
  \eqnb\label{EQ59}
  |p_{\AAA2}(x,t)| \leq 2^{\on} \sum_{k\geq N}\sum_{\tilde Q'\in S_k'} p_{\AAA2,\tilde Q} (x,t)
  .
  \eqne
Let
  \begin{equation}
  b= \frac{3}{2\delta }.
    \llabel{EQ60}  
  \end{equation}
The assumption on $\delta $ in the statement and the condition $q<3$ give 
  \eqnb\label{EQ61}
   b> \frac{3r}{2},\qquad b (4-q)>2\qquad \text{ and }\qquad q>\frac1{b}
   .
  \eqne
Furthermore, for each $\tilde Q$, assuming $a'>\max\{2b,1/\gamma \}$ and letting $b'=b/(b-1)<3$, we have
  \begin{equation} \label{EQ62} \begin{split}
   &\int_0^1 \int_{\tilde Q' } | u \otimes u (z) | z_3^{-\fract{2}{a'}} \d z'\, \d z_3 
         \leq 2^{\fract{2}{b}k } 
            \left( \int_0^1 z_3^{-\fract{2b}{a'}} \d z_3\right)^{\fract{1}{b}} \| u \|_{L^{2b'}(\tilde Q)}^{2}
   \\&\indeq
      \lec_{a,b} 2^{\fract{2}{b}k} \left( \| u \|_{L^2 (\tilde Q)}^{\fract{2b-3}{b}} \, \| \na u \|_{L^{2}(\tilde Q)}^{\fract{3}{b}} + 2^{-\fract{3}{b}k } \| u \|_{L^2(\tilde Q)}^2\right)
    ,
    \end{split}
  \end{equation}
where we used the Gagliardo-Nirenberg-Sobolev inequality.
Thus, for $\tilde Q\in S_k$ with $k\geq \un +1$ we obtain, for any $T>0$,
  \begin{equation}\begin{split}
  \|  &p_{\AAA2,\tilde Q} \|_{L^r_t L^\infty_x (\Om \times (0,T))} \\
  &\leq  \left\|  \int_0^t (t-s)^{-\fract1a}  \int_{\tilde Q'} \int_0^1  \frac{ z_3^{-\fract{2}{a'}}}{(|\xi'-z'| + \xi_3 +z_3 )^{4}}  (1-\chi_* )| u\otimes u (z)|   \d z_3\, \d z'  \,\d s  \right\|_{L^r_t L^\infty_x (\Om\times (0,T))} \\
  &\lec_{a,b} 2^{\left(-4+ \frac{2}{b}\right)k}    \left\|  \int_0^t (t-s)^{-\fract{1}{a}}  \left( \| u(s) \|_{L^2 (\tilde Q)}^{\fract{2b-3}{b}} \, \| \na u(s) \|_{L^{2}(\tilde Q)}^{\fract{3}{b}} + 2^{-\fract{3}{b}k } \| u(s) \|_{L^2(\tilde Q)}^2\right) \d s  \right\|_{L^r_t  (0,T)}\\
  &\lec_a  2^{\left(-4+ \frac{2}{b}\right)k}  T^{\fract{1}{a'}} \left( \int_0^T \left( \| u(s) \|_{L^2 (\tilde Q)}^{\fract{2b-3}{b}} \, \| \na u(s) \|_{L^{2}(\tilde Q)}^{\fract{3}{b}} + 2^{-\fract{3}{b}k } \| u(s) \|_{L^2(\tilde Q)}^{2}\right)^{r} \d s  \right)^{\fract{1}{r}}\\
  &\lec_a  2^{\left(-4+ \frac{2}{b}\right)k}  T^{\fract{1}{a'}} \Biggl( \left( \int_0^T \| u(s) \|_{L^2 (\tilde Q)}^{\fract{2r(2b-3)}{2b-3r}} \d s \right)^{\fract{2b-3r}{2br}} \left( \int_0^T \| \na u (s) \|^{2}_{L^2 (\tilde Q) } \d s\right)^{\fract{3}{2b}} 
  \\&\indeq\indeq\indeq\indeq\indeq\indeq\indeq\indeq\indeq\indeq\indeq\indeq\indeq\indeq
  +2^{-\fract{3}{b}k } \left( \int_0^T \| u(s) \|_{L^2 (\tilde Q)}^{2r} \d s\right)^{\fract{1}{r}} \Biggr)\\
  &\lec_a 2^{\left(-4+ \frac{2}{b}+q\right)k}   T^{\fract{1}{a'}} \left( \| \alpha \|_{L^{\fract{r(2b-3)}{2b-3r}}(0,T)}^{\fract{2b-3}{2b}}  \beta(T)^{\fract{3}{2b}} +2^{-\fract{3}{b}k } \| \alpha \|_{L^r (0,T)}  \right)
  ,
  \end{split}
    \llabel{EQ63}  
  \end{equation}
where we used Young's inequality $\| f\ast g \|_r \leq \| f \|_1 \|g\|_r$ 
in $t$ 
in the third inequality (which gives the constraint $a>1$),
and H\"older's inequality in $t$ in the fourth (note that $3r<2b$ by \eqref{EQ61}).
For $k\leq \un $ we obtain a similar estimate, except that $2^{\left(-4+ \frac{2}{b}+q\right)k} $ is replaced by $2^{-4\un +\left(\frac{2}{b}+q\right)k} $. Thus \eqref{EQ59} gives
  \begin{equation}\begin{split}
  \| p_{\AAA2}   \|_{L^r_t L^\infty_x (\Om\times (0,T) )} &\lec_{a,b} 2^{\on } T^{\frac{1}{a'}} \left( \left( 2^{-4\un }\sum_{k=N}^{\un }  2^{\left(\frac{2}{b}+q\right)k} + \sum_{k\geq \un +1} 2^{\left(-4+ \frac{2}{b}+q\right)k}  \right)\| \alpha \|_{L^{\frac{r(2b-3)}{2b-3r}}(0,T)}^{\frac{2b-3}{2b}}  \beta(T)^{\frac{3}{2b}} \right. \\
  &\hspace{3cm}\left.+ \left( 2^{-4\un }\sum_{k=N}^{\un }  2^{\left(q-\frac{1}{b}\right)k} + \sum_{k\geq \un +1} 2^{\left(-4- \frac{1}{b}+q\right)k}  \right) \| \alpha \|_{L^r (0,T)} \right)\\
  &\lec 2^{\on + \left(\frac{2}{b}+q-4\right)\un} T^{\frac{1}{a'}} \left( \| \alpha \|_{L^{\frac{r(2b-3)}{2b-3r}}(0,T)}^{\frac{2b-3}{2b}}  \beta(T)^{\frac{3}{2b}} + 2^{-\frac{3}{b}\un }\| \alpha \|_{L^r (0,T)}  \right)\\
  &= 2^{\on + \left(\frac{4\delta }{3}+q-4\right)\un} T^{\fract{1}{a'}} \left( \| \alpha \|_{L^{\frac{r(1-\delta )}{1-r\delta }}(0,T)}^{1-\delta }  \beta(T)^{\delta } +2^{-{2}{\delta }\un } \| \alpha \|_{L^r (0,T)}  \right),
  \end{split}
    \llabel{EQ64}  
  \end{equation}
as required. Note that 
the infinite sum converges since $-4+2/3+q  <0$ and the finite sum in the second line converges since $q>1/b$, recall~\eqref{EQ61}.

Finally, we bound $p_{\AAA3}$.
First, by \eqref{EQ23}, we have
  \begin{equation}\begin{split}
  I_3 &= 2^{-\on-\un} \int_{\supp \nabla\chi_*} |q_{\la,x,x_{\Om }} (z) | |u(z)|^2\d z 
  \lec 2^{-4\un} \int_{Q^{**}} \ee^{-|\la |^{\fract{1}{2}}z_3} |u(z)|^2\d z 
  ,
  \end{split}
    \llabel{EQ65}  
  \end{equation}
where 
in the second inequality, we used that
$||x'-z'| + x_3 +z_3 |\gec 2^{\un }$ and $|x-x_{\Om}|\lec 2^{\on}$ on $\supp \nabla \chi_*$ (cf.~\eqref{EQ14} and \eqref{EQ15}).
Now, we 
apply the same analysis as for  $p_{\AAA2}$ 
yielding the same bound on $p_{\AAA3}$ as we obtained for $p_{\AAA2}$. 
The only difference is that here we do not need to sum in $\tilde Q\in \mathcal{C}_N$.\\

\noindent \emph{Step~2.} We show that, at each time, $\| f_B \|_{L^\infty (Q)} \lec |Q|^{\fract{q-2}{3}} \| u \|_{M^{2,q}}^2 $ 
for every $Q\in \mathcal{C}_N$. (Analogously we can obtain $\| F_B \|_{L^\infty (Q)} \lec |Q|^{\fract{q-4}{3}} \| u \|_{M^{2,q}}^2 $.)\\

Note that in this step the sets $Q$, $Q^*$, $Q^{**}$, $Q^{***}$ are not related to $\Omega$, but to a fixed cube $Q$. We shall use the estimate
  \begin{equation}\label{EQ66}  
  |m(D') P(y',y_3) | \lec \frac{y_3}{(|y'|+y_3)^{3+\alpha}},
      \end{equation}
where $m(D')$ is a multiplier (in the $y'$ variables)   that is homogeneous of degree $\alpha >-2$, cf.~\cite[p.~576]{MMP1}. 
Let $z\in Q$, and suppose that $Q\in S_n$.  We only consider $P(z_3+y_3)$, as the part with $P(|z_3-y_3|)$ is similar.
We have
  \begin{equation}\begin{split}
  |f_B (z)| &\lec  \sum_{\tilde Q\in \mathcal{C}_N , \tilde Q\not \in Q^{**}} \int_0^\infty \int_{\RR^2 } \frac{z_3+y_3}{(|z'-y'|+z_3+y_3)^3} (1-\chi_{**} ) \chi_{\tilde Q} |u\otimes u (y',y_3 )| \d y' \, \d y_3\\
  &\lec 2^{n} \left(2^{-3n} \sum_{k=N}^{n} \sum_{\tilde Q \in S_k} \| u \|_{L^2 (\tilde Q)}^2 +  \sum_{k\geq n+1} \sum_{\tilde Q \in S_k} 2^{-3k}\| u \|_{L^2 (\tilde Q)}^2 \right)\\
  &\lec 2^{n} \| u \|_{M^{2,q} }^2  \left(2^{-3n} \sum_{k=N}^{n} \sum_{\tilde Q \in S_k} 2^{qn} +  \sum_{k\geq n+1} \sum_{\tilde Q \in S_k} 2^{(q-3)k} \right)\\
  &\lec 2^{(q-2)n} \| u \|_{M^{2,q} }^2 ,
   \end{split}
   \llabel{EQ67}  
  \end{equation}
(recall \eqref{EQ30} for the definition of $F_B=\nabla'\otimes \nabla' f_B$) where, in the second inequality, we used
\[
\frac{z_3+y_3}{(|z'-y'|+z_3+y_3)^3} \lec 2^n \begin{cases} 2^{-3n} &\quad k\leq n,\\
2^{-3k} &\quad k\geq n+1
\end{cases}
\]
whenever $z\in Q$ and $y\in \tilde Q$ for some cube $\tilde Q \in S_k$ that is disjoint with $Q^{**}$ (which is an analogous claim to \eqref{EQ14} and \eqref{EQ15}).\\

\noindent \emph{Step~3.} We show that $\| p_\BB (t) \|_{L^\infty (\Om )} \lec \alpha (t)     2^{\on (1+q) -5 \un}  t^{\fract{1}{2}} $ 
for $q\in (0,3)$.\\

Note that this, together with Step~1, finishes the proof.
We have 
  \begin{equation}\label{EQ68}  
  \begin{split}
  |p_\BB (x,t)| &\lec \int_0^t \int_\Gamma \ee^{(t-s)\re\la } \int_{\HH} \left( \left|  D^2 q_{\la , x, x_{\Om }} (z ) (1-\chi_*) \right| +\left| \na q_{\la , x, x_{\Om }} (z ) \na \chi_* \right|\right.\\
  &\hspace{5cm}\left.+\left|  q_{\la , x, x_{\Om }} (z ) D^2 \chi_* \right|  \right)  \left| f_B(z,s) \right| \d z \, \d |\la| \d s 
  .
  \end{split}
  \end{equation}
Using \eqref{EQ23}, \eqref{EQ14} and \eqref{EQ15}, we get
  \begin{equation}\begin{split}
  &\int_{\HH} \left| D^2 q_{\la , x, x_\Om } (z ) (1-\chi_*) f_B(z,s) \right| \d z \lec 2^{\on }  \sum_{\tilde Q\in \mathcal{C}_N} \int_{\tilde Q} \frac{\ee^{-|\la |^{\fract{1}{2}} z_3}}{(|\xi'-z'|+\xi_3+z_3)^5} (1-\chi_*) |f_B(z,s)| \d z' \d z_3
  \\&\indeq
   \lec 2^{\on }\left(  2^{-5\un }\sum_{k=N}^{\un } \sum_{\tilde Q\in S_k } \int_{\tilde Q} \ee^{-|\la |^{\fract{1}{2}} z_3} |f_B(z,s)| \d z' \d z_3\right.
    \\&\indeq
  \hspace{4cm}\left. \sum_{k\geq \un+1} \sum_{\tilde Q\in S_k }2^{-5k} \int_{\tilde Q} \ee^{-|\la |^{\fract{1}{2}} z_3} |f_B(z,s)| \d z' \d z_3\right)\\
  &\indeq\lec 2^{\on }\left( \int_0^\infty \ee^{- |\la |^{\fract{1}{2}} z_3} \d z_3\right) \| u \|_{M^{2,q}}^2\left(  2^{-5\un }\sum_{k=N}^{\un } 2^{qk }+ \sum_{k\geq \un+1} 2^{(q-5)k} \right)\\
  &\indeq\lec  2^{\on +(q-5 )\un} |\la |^{-\frac{1}{2}}  \| u \|_{M^{2,q}}^2.
    \llabel{EQ69}  
    \end{split}
  \end{equation}  
Similarly, we have
  \begin{equation}\begin{split}
  &\int_{\HH} \left( \left| \na q_{\la , x, x_\Om } (z ) \na \chi_* \right| +\left| q_{\la , x, x_\Om } (z ) D^2 \chi_* \right|\right)  \left| f_B(z,s) \right| \d z \lec 2^{\on -5\un }  \int_{Q^{**}\setminus Q^*}  \ee^{-|\la |^{\fract{1}{2}} z_3} |f_B(z,s)| \d z' \d z_3\\
  &\indeq\lec 2^{\on -5\un} \sum_{k=\un-2}^{\on+2 } \sum_{\tilde Q\in S_k } \int_{\tilde Q} \ee^{-|\la |^{\fract{1}{2}} z_3} |f_B(z,s)| \d z' \d z_3
  \lec 2^{\on -5\un} |\la |^{-\frac12}  \sum_{k=\un-2}^{\on+2 } 2^{qk} \\
  &\indeq\lec  2^{\on -5\un} |\la |^{-\frac12} \| u \|_{M^{2,q}}^2 \sum_{k=\un-2}^{\on+2 } 2^{qk} 
   \lec  2^{\on(q+1) -5\un} |\la |^{-\frac12} \| u \|_{M^{2,q}}^2 
  .
    \llabel{EQ70}  
    \end{split}
  \end{equation}  
Using these estimates in \eqref{EQ68}, we obtain 
  \begin{equation}\begin{split}
  \|p_\BB(t)\|_{L^\infty (\Om )} &\lec  \alpha (t)   2^{\on (1+q) -5 \un}  \int_0^t \int_\Gamma \ee^{-(t-s)\la } |\la |^{-\fract{1}{2}} \d \la \, \d s    \lec \alpha (t)     2^{\on (1+q) -5 \un}  t^{\fract{1}{2}} 
  ,
  \end{split}
    \llabel{EQ71}  
  \end{equation}
and the proof is complete.
\end{proof}

\cole
\begin{Lemma}[Estimate for $p_{\harm,\leq 1}$]
\label{L07}
For every $t\geq 0$ and $q\in(0,3)$, we have
  \begin{equation}
  \| p_{\harm, \leq 1} (t)\|_{L^\infty (\Om )} 
     \lec   2^{\frac{\on}{4} + (q-4)\un} 
        \alpha(t)  t^{\frac38}
    \llabel{EQ72}  
  \end{equation}
\end{Lemma}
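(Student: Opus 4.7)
The plan is to reduce the estimate to a pointwise bound on $F_B$ restricted to $\supp\chi_*\subset Q^{**}$, and then to analyze the remaining spatial integral against the kernel $q_\la$ by a careful H\"older inequality. First I would establish
\[
\|F_B(s)\|_{L^\infty(Q^{**})} \lec 2^{(q-4)\un}\,\|u(s)\|_{M^{2,q}}^2,
\]
arguing exactly as in Step~2 of the proof of Lemma~\ref{L06}: the kernel bound \eqref{EQ66} is applied with $\alpha=2$ (corresponding to the two horizontal derivatives in $F_B=\na'\otimes\na' f_B$), and the sum over $\tilde Q\in S_k$ with $\tilde Q\not\subset Q^{**}$ is split according to whether $k\leq\un$ (for which the distance from $Q^{**}$ to $\tilde Q$ is $\gec 2^{\un}$) or $k\geq\un+1$ (for which it is $\gec 2^k$), analogously to \eqref{EQ14}. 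The hypothesis $q<4$ guarantees convergence of the resulting geometric series.

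Inserting this pointwise bound into \eqref{EQ35} and applying \eqref{EQ23} with $m=0$ to $q_\la$ reduces the lemma to proving
\[
\int_{Q^{**}} \frac{e^{-|\la|^{1/2} z_3}}{(|x'-z'|+x_3+z_3)^2}\,\d z \;\lec\; 2^{\on/4}\,|\la|^{-3/8},
\]
uniformly in $x\in\Omega$. My approach is H\"older's inequality in $z'$ on the disk $\{|z'-x'|\leq C2^{\on}\}$ with conjugate exponents $(p,p')=(8/7,8)$: the area factor contributes $(C2^{2\on})^{1/8}\lec 2^{\on/4}$, while a direct computation with $u:=x_3+z_3$ yields
\[
\bigl\|(|x'-\cdot|+u)^{-2}\bigr\|_{L^{8/7}(\{|\cdot-x'|\leq C2^{\on}\})} \lec u^{-1/4},
\]
after which the $z_3$-integration becomes the gamma integral
\[
\int_0^\infty e^{-|\la|^{1/2} z_3}\,z_3^{-1/4}\,\d z_3 = \Gamma(3/4)\,|\la|^{-3/8}.
\]

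The proof concludes with the $\la$- and $s$-integrations: a standard contour estimate gives $\int_\Gamma e^{-(t-s)\re\la}|\la|^{-3/8}\,\d|\la|\lec (t-s)^{-5/8}$, and the subsequent integration in $s$ produces $\int_0^t(t-s)^{-5/8}\,\d s\lec t^{3/8}$; combining with the prefactors $2^{(q-4)\un}\alpha(t)\cdot 2^{\on/4}$ yields the stated bound.

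The main obstacle is the choice of H\"older exponent in the spatial integral: a larger $p$ creates a non-integrable $z_3$-singularity at the origin, while a smaller $p$ amplifies the $2^{\on}$ prefactor beyond what the lemma allows. The value $p=8/7$ is essentially the unique choice consistent with both the prescribed prefactor $2^{\on/4}$ and the requirement that the resulting time exponent $3/8$ be positive and integrable in~$s$.
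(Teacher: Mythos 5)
Your proposal is correct and follows essentially the same route as the paper: reduce via the $L^\infty$ bound on $F_B$ from Step~2 of Lemma~\ref{L06}, apply the kernel bound \eqref{EQ23} with $m=0$, establish $\int_{|y'|\leq C 2^{\on}}(|y'|+x_3+z_3)^{-2}\,\d y'\lec 2^{\on/4}(x_3+z_3)^{-1/4}$, and finish with the gamma and contour integrals. The only cosmetic difference is that you derive the two-dimensional integral bound via H\"older with exponents $(8/7,8)$ while the paper computes it directly (as in Lemma~\ref{L01}, giving $(a/b)^\alpha$ for any $\alpha\in(0,1)$); note therefore that $\alpha=1/4$, equivalently $p=8/7$, is a choice rather than forced—any $\alpha\in(0,1)$ yields a valid estimate with prefactor $2^{\alpha\on}$ and time factor $t^{(1-\alpha)/2}$, and the paper simply fixes the balance it wants.
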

\colb
Recall from \eqref{EQ08} that $\alpha (t) =\sup_{s\in[0,t]}\| u (s) 
\|_{M_{\mathcal{C}_N}^{2,q}}^2$.

\begin{proof}[Proof of Lemma~\ref{L07}]
By \eqref{EQ35}, we have
  \begin{equation}
  p_{\harm, \leq 1} (x,t) = \frac{1}{2\pi i}\int_0^t \int_\Gamma \ee^{-(t-s)\la } \int_{\HH} q_{\la } (x'-z', x_3,z_3 ) \chi_* F_B(z,s)' \d z \, \d \la \d s,
    \llabel{EQ73}  
  \end{equation}
where $F_B$ is as in $p_{\harm,\geq 1}$. As in Lemma~\ref{L01}, we use \eqref{EQ23} to obtain
 \begin{equation}\begin{split}
  | p_{\harm, \leq 1} (x,t) | &\lec \int_0^t \int_\Gamma \ee^{-(t-s)\re \la} \int_0^\infty \ee^{-|\la |^{\frac12}z_3}  \int_{\RR^2} (|x'-z'|+x_3+z_3)^{-2}\left| \chi_* F_B (z,s) \right|  \d z'\, \d z_3 \, \d |\la |\, \d s \\
  &\leq \int_0^t \| F_B (s) \|_{L^\infty (Q^*)} \int_\Gamma \ee^{-(t-s)\re \la} \int_0^\infty \ee^{-|\la |^{\frac12}z_3}  \int_{\{|z'| \leq C 2^{\on } \}} (|z'|+x_3+z_3)^{-2}\d z'\,  \d z_3 \, \d |\la |\, \d s \\
  &\lec  2^{(q-4)\un+\frac{\on}{4}} \alpha(t)  \int_0^t  \int_\Gamma \ee^{-(t-s)\re \la} \int_0^\infty \ee^{-|\la |^{\frac12}z_3} (x_3+z_3)^{-\frac14}   \d z_3 \, \d |\la |\, \d s \\
  &\lec  2^{(q-4)\un+\frac{\on}{4}}\alpha(t)  \int_0^t  \int_\Gamma \ee^{-(t-s)\re \la} |\la |^{-\frac38} \d |\la |\, \d s \\
  &\lec  2^{(q-4)\un+\frac{\on}{4}}\alpha(t)  \int_0^t (t-s)^{-\frac58} \d s \\
  &\lec 2^{(q-4)\un+\frac{\on}{4}}\alpha(t)  t^{\frac38} ,
    \llabel{EQ74}  \end{split}
  \end{equation}
for every $x\in \Om $ and $t>0$, where in the third inequality we used the estimate $\| F_B \|_{L^\infty (\tilde Q)} \lec |\tilde Q|^{\fract{q-4}{3}} \| u \|_{M^{2,q}}^2 \lec 2^{(q-4)\un }  \| u \|_{M^{2,q}}^2  $, for every $\tilde Q\subset Q^*$ (recall Step~2 above), as well as the fact $\int_{|y'|\leq a } (|y'|+b )^{-2} \d y' \lec (a/b)^{\frac14}$, as in the proof of Lemma~\ref{L01}.
\end{proof}

\section{A priori bound}
\label{sec04}
We now establish our main a~priori bound for solutions to \eqref{EQ01}
for initial data in $M_{\mathcal{C}_n}^{2,q}$.
For simplicity of notation, we replace the label~$N$ in the previous section 
with~$n$.
We work under the assumption
  \begin{equation}
    1\leq q \leq 2
   .
    \label{EQ75}  
  \end{equation}
Recall from \eqref{EQ08} the notation
  \begin{equation}
  \alpha_n (t) =\sup_{s\in[0,t]}\| u (s) \|_{M_{\mathcal{C}_n}^{2,q}}^2 
  \quad \text{ and } \quad 
  \beta_n(t) = \sup_{Q\in \mathcal C_n} \frac 1 {|Q|^{\fract{q}{3}}}\int_0^t\int_{Q}|\nabla u(x,s)|^2\,\d x\,\d s
   .
    \llabel{EQ76}  
  \end{equation}
Since in Lemma~\ref{L15} below we show 
that $\alpha$ and $\beta$ are 
continuous functions of~$t$, we 
define $\alpha_n(0)=\| u_0\|_{M^{2,q}_{\mathcal C_n}}^2 $. 

Theorem~\ref{T02} follows from the following statement.

\cole
\begin{Proposition}[A priori bound]\label{Pro01}
Assume that \eqref{EQ75} holds. There exists $\gamma\geq1$ with the following property. Let $n\in{\mathbb N}$, suppose that $u_0\in \mathring M^{2,q}_{\mathcal C_n}$, and assume that $(u,p)$ is a local energy solution on $\HH\times (0,\infty)$ with the initial data $u_0$ such that   
  \EQ{
   \esssup_{0<s<t}(\alpha_n(s)+\beta_n(s))<\infty,
   \llabel{EQ77}
  }
for all $t<\infty$. Let $T=T_n$ be the solution of 
  \begin{equation}
   a=
       (1+T)^{\gamma}
       \left( 
        (2 a)^{3} T^{\fract{3}{8}}
        +  (2 a)^{\fract{3}{2}} T^{\fract{3}{16} }
        +  a 2^{-n} T
       \right)
      ,
    \label{EQ78}  
  \end{equation}
where $a= \| u_0 \|_{M^{2,q}_{\cn}}^2$. Then
  \EQ{
  \alpha_n(T_n)
         + \beta_n(T_n)
      \leq 
       C \alpha_n(0)
  ,
   \llabel{EQ79}
  }
where $C>0$ is a constant.
\end{Proposition}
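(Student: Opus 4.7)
The plan is to localize the Caffarelli--Kohn--Nirenberg local energy inequality to a single cube $Q\in\mathcal{C}_n$, convert it into an integral inequality for $F(t):=\alpha_n(t)+\beta_n(t)$, and close a continuity argument on $[0,T_n]$. I would first pick a smooth cutoff $\phi$ with $\phi\equiv 1$ on $Q^*$, $\phi\equiv 0$ outside $Q^{**}$, and $|\nabla\phi|\lesssim|Q|^{-1/3}$, $|\Delta\phi|\lesssim|Q|^{-2/3}$. Testing \eqref{EQ155} against $\phi$, and using continuity from part~(5) of Definition~\ref{D02} to fix the initial trace, gives for a.e.\ $s\in(0,t)$
\begin{equation*}
\int_{\HH}|u(s)|^2\phi+2\int_0^s\!\!\int_{\HH}|\nabla u|^2\phi\leq \int_{\HH}|u_0|^2\phi+\int_0^s\!\!\int_{\HH}|u|^2\Delta\phi+\int_0^s\!\!\int_{\HH}(|u|^2+2p)\,u\cdot\nabla\phi.
\end{equation*}
Normalizing by $|Q|^{q/3}$ and taking the supremum over $s\leq t$ and $Q\in\mathcal{C}_n$ converts the left-hand side into (a multiple of) $\alpha_n(t)+\beta_n(t)$.

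Next I would estimate each term on the right. The $\Delta\phi$ piece contributes $C\,2^{-2n}\,t\,\alpha_n(t)$. For the nonlinear term $\iint|u|^2\,u\cdot\nabla\phi$, Gagliardo--Nirenberg--Sobolev applied on $Q^{**}$ yields $\|u\|_{L^3}^3\lesssim \|u\|_{L^2}^{3/2}\|\nabla u\|_{L^2}^{3/2}+|Q|^{-1/2}\|u\|_{L^2}^3$, so after normalization and H\"older in time one obtains a contribution of order $F(t)^{3/2}t^{3/16}$ plus a lower-order piece. For the pressure term I would apply the seven-part decomposition \eqref{EQ07} and pair each piece with $u\cdot\nabla\phi$ using the bounds \eqref{EQ37}. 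The linear pieces produce data-dependent terms of the form $\alpha_n(0)^{1/2}F(t)^{1/2}t^\kappa$, absorbable via Young's inequality; the Helmholtz parts together with $p_{\harm,\leq 1}$ and $p_{\harm,\geq 1}$ generate $F(t)^{3/2}t^{3/16}$ and $F(t)^{3}t^{3/8}$ type contributions; and the delicate $p_{\loc,\harm}$ is paired by a double H\"older inequality with $u\in L^3_tL^{17/7}_x$ (produced by GNS on $u$), so that the $L^{3/2}_tL^{17/10}_x$ bound of Lemma \ref{L04} contributes an $F(t)^3 t^{3/8}$-type term with the power of $\beta_n$ strictly less than one, and is therefore absorbable on the left.

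Assembling all estimates yields an inequality of the schematic form
\begin{equation*}
F(t)\leq C\alpha_n(0)+C(1+t)^{\gamma}\!\left(F(t)^{3}t^{3/8}+F(t)^{3/2}t^{3/16}+F(t)\,2^{-n}t\right),
\end{equation*}
valid on every interval where the right-hand side is finite. Since Lemma~\ref{L15} will guarantee continuity of $F$ with $F(0)=a:=\|u_0\|_{M^{2,q}_{\mathcal C_n}}^2$, the closing step is a standard bootstrap: set $T^*:=\sup\{t\geq 0:F(s)\leq 2Ca\text{ for all } s\leq t\}$, substitute $F\leq 2Ca$ into the inequality, and observe that the defining relation \eqref{EQ78} for $T_n$ (after absorbing the constant $C$ into $\gamma$) forces the right-hand side to remain below $2Ca$ on $[0,T_n]$, yielding $T^*\geq T_n$ and the claimed bound.

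The hardest step will be the treatment of $p_{\loc,\harm}$. Because Lemma~\ref{L04} only provides an $L^{3/2}_tL^{17/10}_x$ estimate, the corresponding contribution can be absorbed back into $\beta_n$ only if the H\"older pairing is done with exactly matched time- and space-exponents. Any borderline relaxation (for instance using the critical $L^{3/2}_tL^{9/5}_x$ estimate flagged in the discussion preceding Lemma~\ref{L04}) introduces an $L^\infty_t$-norm of $\alpha_n$ on the right, destroying the continuity argument; dropping the space exponent instead produces a super-linear power of $\beta_n$ that the dissipation cannot absorb. Reconciling these two constraints, together with the restriction $q\in[1,2]$ that governs the growth from neighboring cubes via the factor $|Q|^{q/3}$, is exactly what dictates the specific form of \eqref{EQ78}.
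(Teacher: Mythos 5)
Your proposal is correct and mirrors the paper's own proof: localize the local energy inequality \eqref{EQ155} on cubes of $\mathcal C_n$, control the cubic and pressure terms via \eqref{EQ156} and the bounds \eqref{EQ37} (with the decisive $L^{3/2}_t L^{17/10}_x$ estimate for $p_{\loc,\harm}$ paired against $u\in L^3_t L^{17/7}_x$ so that the total $\beta$-power stays below one), then close with the continuity of $\alpha_n+\beta_n$ (Lemma~\ref{L15}) and a barrier comparison yielding \eqref{EQ78}. The only cosmetic deviation is that you write the final differential inequality directly in terms of $F(t)=\alpha_n(t)+\beta_n(t)$ with explicit time weights, whereas the paper phrases the same closure through $\|\alpha\|_{L^8(0,t)}$ and the generic barrier Lemma~\ref{L10}; the two formulations are equivalent for a constant comparison function.
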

\colb
\colb

We note that the proof below shows that the constant in \eqref{EQ11} can be arbitrarily close to~$1$.
Note that $\| u_0\|_{M^{2,2}_{\mathcal C_n}}\to0$ as $n\to \infty$ for $u_0\in \MC$.
Therefore, the solution $T=T_n$ of \eqref{EQ78} satisfies $T_n\to \infty$ as $n\to \infty$.

First, we recall the following lemma, which is adapted from \cite{BKT} to the case of~$\HH$.

\cole
\begin{Lemma}
\label{L08}
Let $u\colon\HH\times (0,T) \to \HH$.
Given $\epsilon>0$,  we have 
  \begin{align}
   \begin{split}
    \frac {1} {|Q|^{\fract{1}{3}}} \int_0^t\int_{Q} |u|^3\,\d x\,\d s 
     &\leq 
      C_\epsilon   |Q|^{q-\fract{4}{3}} \int_0^t \bigg( \frac 1 {|Q|^{\fract{q}{3}}} \int_{Q}|u(s)|^2 \,\d x \bigg)^{3} \,\d s  
    \\&\indeq 
     +  
    \epsilon \int_0^t  \int_{Q}|\nabla u|^2 \,\d x\,\d s
   \\ &\indeq 
   + C |Q|^{\fract{q}{2}-\fract{5}{6}} \int_0^t \bigg( \frac 1 {|Q|^{\fract{q}{3}}} \int_Q |u|^2\,\d x \bigg)^{\fract{3}{2}}\,\d s
   \comma t\in(0,T)  
  ,
  \end{split}   
  \label{EQ80}
  \end{align}
for any cube $Q\subset \HH$.
\end{Lemma}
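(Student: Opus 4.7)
The plan is to reduce \eqref{EQ80} to a standard Gagliardo--Nirenberg--Sobolev inequality on the cube $Q$, then split the product $\|u\|_{L^2}^{3/2}\|\nabla u\|_{L^2}^{3/2}$ via Young's inequality with a tuning parameter proportional to $\epsilon |Q|^{1/3}$, and finally rewrite each term in the scale-invariant form using $\alpha_n,\beta_n$ notation. No explicit use of the equation or the boundary condition will be needed; the estimate is purely functional-analytic.

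First, let $R=|Q|^{1/3}$ and start from the scaling-invariant Gagliardo--Nirenberg--Sobolev bound on a cube of side $R$,
\[
\|u(s)\|_{L^{3}(Q)}^{3}
\lec
\|u(s)\|_{L^{2}(Q)}^{3/2}\,\|\nabla u(s)\|_{L^{2}(Q)}^{3/2}
+ R^{-3/2}\,\|u(s)\|_{L^{2}(Q)}^{3}
,
\]
which follows either by the Sobolev extension operator for cubes or by rescaling the unit-cube estimate $\|v\|_{L^{3}(Q_{1})}^{3}\lec \|v\|_{L^{2}(Q_{1})}^{3/2}\|\nabla v\|_{L^{2}(Q_{1})}^{3/2}+\|v\|_{L^{2}(Q_{1})}^{3}$. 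The lower-order term is needed because $u$ need not vanish on $\partial Q$.

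Next, integrate in $s\in(0,t)$ and apply Young's inequality in the form $ab\leq \delta a^{4/3}+C\delta^{-3}b^{4}$ to the product $a=\|\nabla u(s)\|_{L^{2}(Q)}^{3/2}$, $b=\|u(s)\|_{L^{2}(Q)}^{3/2}$, choosing $\delta=\epsilon\,|Q|^{1/3}$. This converts the cross term into $\epsilon|Q|^{1/3}\|\nabla u(s)\|_{L^{2}(Q)}^{2}+C\epsilon^{-3}|Q|^{-1}\|u(s)\|_{L^{2}(Q)}^{6}$. Dividing the resulting time-integrated inequality through by $|Q|^{1/3}$ yields
\[
\frac{1}{|Q|^{1/3}}\int_{0}^{t}\!\!\int_{Q}|u|^{3}
\lec
\epsilon \int_{0}^{t}\!\!\int_{Q}|\nabla u|^{2}
+ C_{\epsilon}\,|Q|^{-4/3}\!\int_{0}^{t}\!\|u(s)\|_{L^{2}(Q)}^{6}\,ds
+ C\,|Q|^{-5/6}\!\int_{0}^{t}\!\|u(s)\|_{L^{2}(Q)}^{3}\,ds
,
\]
since $R^{-3/2}=|Q|^{-1/2}$ combines with the prefactor $|Q|^{-1/3}$ to give $|Q|^{-5/6}$.

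Finally, insert the trivial identities $\|u(s)\|_{L^{2}(Q)}^{6}=|Q|^{q}\bigl(|Q|^{-q/3}\int_{Q}|u|^{2}\bigr)^{3}$ and $\|u(s)\|_{L^{2}(Q)}^{3}=|Q|^{q/2}\bigl(|Q|^{-q/3}\int_{Q}|u|^{2}\bigr)^{3/2}$. The power of $|Q|$ on the third right-hand-side term of \eqref{EQ80} then becomes $-5/6+q/2$, and on the first it becomes $-4/3+q$, which match \eqref{EQ80}. I do not see a genuine obstacle in this argument; the only delicate point is choosing the Young parameter to the power $|Q|^{1/3}$ so that the resulting gradient term carries no weight in $|Q|$, which is needed later when \eqref{EQ80} is summed against the local energy inequality over cubes in $\mathcal{C}_{n}$.
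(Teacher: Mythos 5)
Your argument is correct and coincides with the paper's (the paper defers to the Gagliardo--Nirenberg--Sobolev derivation in \cite{BKT} and notes the half-space analogue is straightforward). The scaling-invariant GNS bound on the cube, the Young splitting with parameter $\delta=\epsilon|Q|^{1/3}$, and the final rewrite of the $|Q|$ exponents all check out.
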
 
\colb

This statement is proven in the case of $\mathbb R^3$ using the Gagliardo-Nirenberg-Sobolev inequality in~\cite{BKT}; the proof of the analog for $\HH$ is straight-forward and is thus omitted.
The inequality \eqref{EQ80} implies that
  \eqnb\label{EQ156}
  \frac {1} {|Q|^{\fract{1}{3}}} \int_0^t\int_{Q} |u|^3
     \leq 
      C_\epsilon |Q|^{q-\fract{4}{3}}  \| \alpha \|_{L^3 (0,t)}^3  +\epsilon |Q|^{\fract{q}{3}} \beta(t) +  |Q|^{\fract{q}{2}-\fract{5}{6}} \| \alpha \|_{L^{\frac32} (0,t)}^{\frac32}  
  ,
  \eqne
for every $Q\in \cn$ and $t>0$, 
suppressing the dependence of $\alpha$ and $\beta $ on~$n$ in the rest of the section.
Similarly, one can show that  
  \eqnb\label{EQ82}
   \| u \|_{L^3 ((0,t);L^{\fract{17}{7}}(Q))} 
       \lec  |Q|^{\frac{q}{6}}  \| \alpha \|_{L^{\frac{75}{41}}(0,t)}^{\frac{25}{68}}  \beta(t)^{\frac9{68}} 
               + |Q|^{\frac{q}{6}-\frac{3}{34}} \| \alpha \|_{L^{\frac{3}2}(0,t)}^{\frac12}  
   .
   \eqne
We now use the pressure estimates developed in the previous section to deduce a bound on the pressure term appearing in the local energy inequality.

\cole
\begin{Lemma}\label{L09}
Let $\phi_Q$ be such that $\phi_Q=1$ on $Q$, $\phi_Q=0$ outside $Q^*$,
and $|\na \phi_Q |\lec |Q|^{-\fract{1}{3}}$. 
Then 
  \begin{equation}
  \int_0^t \int p u\cdot \na \phi_Q 
     \leq 
     C_\epsilon |Q|^{\frac{q}3} (1+t)^{\gamma}
          \left( \| \alpha \|_{L^{8}(0,t)}^{3} 
              + \| \alpha \|_{L^{8}(0,t)}^{\frac{3}{2}}  
              + |Q|^{-\frac16} \| \alpha \|_{L^{8}(0,t)} 
     \right)
                 + \epsilon |Q|^{\frac{q}3} \beta (t)
    ,
    \llabel{EQ81}  
  \end{equation}
for $q\leq 2$, $\epsilon >0$, where $\gamma\geq1$.
\end{Lemma}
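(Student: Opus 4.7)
The plan is to decompose $p$ via the local pressure expansion \eqref{EQ07} of Definition~\ref{D02}(6), writing $p=\sum_j p_j$ up to a function of time which can be discarded because $\int u\cdot\nabla\phi_Q\,dx=0$ by the divergence-free condition and $u_3|_{\partial\HH}=0$. Each contribution $I_j:=\int_0^t\int p_j\,u\cdot\nabla\phi_Q$ is then bounded separately using the corresponding estimate from \eqref{EQ37}, specialized to $\Om = Q^{*}\in\mathcal{C}_n$ so that $\underline{n}=\overline{n}=n_Q$ and $2^{\underline{n}}=|Q|^{1/3}$. In each case the first step is H\"older's inequality in space on $\supp\nabla\phi_Q\subset Q^*$, using $\|\nabla\phi_Q\|_\infty\lec|Q|^{-1/3}$ and one of the bounds $\|u\|_{L^2(Q^*)}\lec|Q|^{q/6}\alpha(s)^{1/2}$ or $\|u\|_{L^1(Q^*)}\lec|Q|^{(q+3)/6}\alpha(s)^{1/2}$, followed by H\"older in time.

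The easier pieces $p_{\li,\loc}$, $p_{\li,\nonloc}$, $p_{\nonloc,\Helm}$, and $p_{\harm,\leq1}$ (Lemmas~\ref{L01}, \ref{L02}, \ref{L05}, \ref{L07}) are pointwise-in-time bounded in $L^2_x$ or $L^\infty_x$ and pair directly with the spatial norms of $u$ above. The piece $p_{\loc,\Helm}$ (Lemma~\ref{L03}) is controlled in $L^{3/2}_x(\HH)$ and produces $|Q|^{-1/3}\int_0^t\|u\|_{L^3(Q^{***})}^3\,ds$, to which \eqref{EQ156} applies. The two harder pieces require space-time pairings: the $L^{3/2}_tL^{17/10}_x$ bound of Lemma~\ref{L04} for $p_{\loc,\harm}$ pairs with $\|u\|_{L^3_tL^{17/7}_x(Q^*)}$ from~\eqref{EQ82}, while the $L^r_tL^\infty_x$ bound of Lemma~\ref{L06} for $p_{\harm,\geq1}$ pairs with $\|u\|_{L^{r'}_tL^1_x(Q^*)}$, with $r$, $\delta$, $\gamma$ to be chosen.

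Every resulting expression takes the form $|Q|^{A/3}\mathcal{A}\,\beta(t)^{s}$ with $s\in[0,1)$ and $\mathcal{A}$ a product of $L^r((0,t))$-norms of $\alpha$. To absorb the $\beta$-factor into the dissipation term, one applies a \emph{weighted} Young's inequality
\[
|Q|^{A/3}\mathcal{A}\,\beta^s
 \leq \epsilon\,|Q|^{c/3}\beta
  + C_\epsilon\, |Q|^{(A-cs)/(3(1-s))}\,\mathcal{A}^{1/(1-s)},
\]
where the weight $c\in[0,q]$ is chosen so that both $c$ and $(A-cs)/(1-s)$ are at most~$q$. The exponent $1/(1-s)$ is forced by Young's and combines with the $\alpha$-powers appearing in \eqref{EQ37} and \eqref{EQ82} to yield a total $\alpha$-power of exactly $3$, $3/2$, or $1$, matching the three target terms. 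All remaining $L^{r}((0,t))$-norms of $\alpha$ with $r\leq 8$ are then dominated by $\|\alpha\|_{L^{8}(0,t)}$ via H\"older in time, at the cost of powers of $t$ which are collected into a single $(1+t)^{\gamma}$ factor.

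The main obstacle is the bookkeeping for the six cross terms generated by $p_{\loc,\harm}$ when the three summands of Lemma~\ref{L04} are multiplied by the two summands of~\eqref{EQ82}; these carry $\beta$-powers $51/68$, $43/68$, $21/34$, $9/68$, $1/2$, and~$0$. For each cross term an unweighted Young's inequality overshoots the target spatial power $|Q|^{q/3}$, which is why the weighted form above is needed. A short computation shows that the feasible range for the weight $c$ is $(5q-4)/3\leq c\leq q$, which is nonempty precisely when $q\in[1,2]$, explaining the restriction~\eqref{EQ75}. An analogous but simpler bookkeeping handles $p_{\harm,\geq1}$, with the parameter $\delta$ of Lemma~\ref{L06} playing the role of $s$. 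Summing the seven contributions yields the claimed bound.
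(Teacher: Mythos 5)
Your proposal follows the paper's proof essentially step by step: decompose $p$ by the local pressure expansion, discard the time-dependent constant against $\int u\cdot\nabla\phi_Q\,dx=0$, pair each piece with the matching $u$-norm on $Q^*$ using \eqref{EQ37}, \eqref{EQ156}, and \eqref{EQ82}, and then absorb the $\beta$-powers by Young's inequality. This is the paper's argument.

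One piece of your write-up is misleading, though, and worth flagging. You assert that an unweighted Young's inequality ``overshoots the target spatial power'' for the six cross terms from $p_{\loc,\harm}$ and that a weighted split is forced. That is not the case. After multiplying out \eqref{EQ82} against the bound of Lemma~\ref{L04}, every cross term has the form $|Q|^{q/3}\,|Q|^{\mu}\,\mathcal{A}\,\beta^{s}$ with $\mu\le 0$ once $q\le 2$ (the extra factors $|Q|^{(q-2)/6}$, $|Q|^{-3/34}$, $|Q|^{-4/51}$, $|Q|^{-21/51}$ are all nonpositive). Since $|Q|\gec 1$, one simply discards $|Q|^{\mu}$ and then applies \emph{unweighted} Young's to $\mathcal{A}\,\beta^{s}$, after which both resulting terms already carry exactly $|Q|^{q/3}$; this is what the paper does after \eqref{EQ86}. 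Your weighted version with parameter $c$ is correct but redundant, since the choice $c=A$ reproduces the unweighted split and the admissibility condition reduces to $A\le q$, i.e.~$q\le 2$. Relatedly, the feasibility interval you derive explains only the upper restriction $q\le 2$; it does not produce the lower bound $q\ge1$, which is part of the standing hypothesis \eqref{EQ75} for other reasons and is not used at this point. Two further small imprecisions: $Q^{*}$ is a union of cubes of sizes $2^{n_Q-1}$ through $2^{n_Q+1}$, not a single element of $\mathcal{C}_n$ with $\un=\on=n_Q$ (the estimates still apply with comparable $\un,\on$, so this is harmless); and the $\alpha$-exponents produced by Young's are not ``exactly'' $3$, $3/2$, $1$ but land in $[3/2,3]$, whence $\|\alpha\|^p\lec\|\alpha\|^{3}+\|\alpha\|^{3/2}$ closes the bound.
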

\colb

\begin{proof}[Proof of Lemma~\ref{L09}]
We have
  \begin{align}
  \begin{split}
    &  \int_0^t \int (p_{\li,\loc} +p_{\li,\nonloc})u\cdot \na \phi_Q 
    \\&\indeq
    \lec
    |Q|^{-1/3}
    \int_{0}^{t}
     \Vert p_{\li,\loc}\Vert_{L^2(Q^{*})}
     \Vert u\Vert_{L^2(Q^{*})}
     \d s
    +
    |Q|^{-1/3}
    \int_{0}^{t}
     \Vert p_{\li,\nonloc}\Vert_{L^\infty(Q^{*})}
     \Vert u\Vert_{L^1(Q^{*})}
     \d s
    \\&\indeq
    \lec
    |Q|^{\frac{q-1}{6}}
    \int_{0}^{t}
     \| u_0 \|_{M_{\cn}^{2,q}} 
     \Vert u\Vert_{L^2(Q^{*})}
     s^{-\frac34}
     \d s
    +
    |Q|^{\frac{q-4}{6}}
    \int_{0}^{t}
     \| u_0 \|_{M_{\cn}^{2,q}} 
     \Vert u\Vert_{L^1(Q^{*})}
     s^{-\frac34}
     \d s
    \\&\indeq
    \lec 
    |Q|^{\frac{q-1}{6}} 
     \int_0^t 
        \| u \|_{L^2(Q^*)} 
        \| u_0 \|_{M_{\cn}^{2,q}} 
     s^{-\frac34} \d s 
    \\&\indeq
    \lec 
    |Q|^{\frac{q}{3}-\frac16} 
    \| u_0 \|_{M_{\cn}^{2,q}} 
    \int_0^t 
    \alpha(s)^{1/2} 
    s^{-\frac34} \d s
    \lec 
\colb
      |Q|^{\frac{q}{3}-\frac16}
      t^{\fract{3}{16}} 
      \| u_0 \|_{M_{\cn}^{2,q}} 
     \| \alpha \|_{L^8(0,t)}^{\frac{1}{2}}
\colb
   ,
   \end{split}
   \label{EQ84}  
  \end{align}
where we used  
the first two inequalities of 
\eqref{EQ37} in the second step.

Next, by the third estimate in~\eqref{EQ37}, we may bound
  \begin{equation}\begin{split}
  \int_0^t \int p_{\loc,\Helm} \,u\cdot \na \phi_Q &\lec |Q|^{-\frac13} \left( \int_0^t \int_{Q^*} |u|^3 + \int_0^t \int_{Q^*} |p_{\loc,\Helm}|^{\frac32} \right)
   \lec |Q|^{-\frac13}  \int_0^t \int_{Q^{***}} |u|^3 
   \\&
   \lec
\colb
   C_\epsilon |Q|^{\frac{q}{3}}|Q|^{\frac{2q}{3}-\fract{4}{3}}  \| \alpha \|_{L^3 (0,t)}^3  
        +\epsilon |Q|^{\fract{q}{3}} \beta(t) 
      +  |Q|^{\frac{q}{3}}|Q|^{\fract{q}{6}-\fract{5}{6}} \| \alpha \|_{L^{\frac32} (0,t)}^{\frac32}  
\colb
   ,
  \end{split}
    \label{EQ85}  
  \end{equation}
by \eqref{EQ156}.
With $\theta=\theta(t)$ as in \eqref{EQ37}, we write
  \begin{equation}\begin{split}
  &\int_0^t \int p_{\loc,\harm} \,u\cdot \na \phi_Q =\int_0^t \int (p_{\loc,\harm} - \theta) \,u\cdot \na \phi_Q   
  \\&\indeq
   \lec |Q|^{-\frac13} \| u \|_{L^3((0,t);L^{\frac{17}7} (Q^*))}  \| p_{\loc,\harm} -\theta\|_{L^{\frac32} ((0,t); L^{\frac{17}{10}} (\HH))}
   \\&\indeq
    \lec 
     |Q|^{\frac{q}{3}}
     \left(  |Q|^{\frac{q-2}{6}} \| \alpha \|_{L^{\frac{75}{41}}(0,t)}^{\frac{25}{68}}  \beta(t)^{\frac9{68}} 
           + |Q|^{{\fract{q}{6}-\fract{43}{102}}} \| \alpha \|_{L^{\frac{3}2}(0,t)}^{\frac12}   \right) 
   \\&\indeqtimes
  \left(
       \| \alpha \|_{L^{\frac{39}{5}}(0,t)}^{\frac{13}{34}} 
       \beta(t)^{\frac{21}{34}}
      + |Q|^{-\frac{4}{51}}\| 
            \alpha \|_{L^3 (0,t)}^{\frac12}   
            \beta(t)^{\frac12}
      +|Q|^{-\frac{21}{51}}
         \| \alpha \|_{L^{\frac32}(0,t)}  
  \right) 
  \\&\indeq
 \lec 
  |Q|^{\frac{q}3} |Q|^{\frac{q-2}{6}}
     \left(  
             t^{\frac{253}{1632}}\| 
             \alpha \|_{L^{8}(0,t)}^{\frac{25}{68}}  
             \beta(t)^{\frac9{68}} 
           + |Q|^{-\fract{3}{34}} 
             t^{\frac{13}{48}}
             \| \alpha \|_{L^{8}(0,t)}^{\frac12}   \right) 
   \\&\indeqtimes
    \left(
     t^{\frac{1}{816}}
      \| \alpha \|_{L^{8}(0,t)}^{\frac{13}{34}} \beta(t)^{\frac{21}{34}}
      + 
       |Q|^{-\frac{4}{51}}
       t^{\frac{5}{48}}
       \| \alpha \|_{L^8 (0,t)}^{\frac12}   
       \beta(t)^{\frac12}
      +
        |Q|^{-\frac{21}{51}} 
        t^{\frac{13}{24}}
        \| \alpha \|_{L^{8}(0,t)}  
      \right) 
   ,
  \end{split}
   \label{EQ86}
  \end{equation}
where we applied \eqref{EQ82} in the second  inequality. 
Therefore,
  \begin{equation}\begin{split}
  &\int_0^t \int p_{\loc,\harm} \,u\cdot \na \phi_Q 
   \\&\indeq
   \lec  
       |Q|^{\frac{q}{3}} 
       (1+t)
     \biggl(   
         \Vert \alpha \Vert_{L^{8}(0,t)}^{\frac{3}{4}} 
         \beta(t)^{\frac{3}{4}} 
         + 
        \Vert \alpha \Vert_{L^{8} (0,t)}^{\frac{59}{68}}   
        \beta(t)^{\frac{43}{68}} 
         +
      \Vert \alpha \Vert_{L^{8}(0,t)}^{\fract{93}{68}}
      \beta(t)^{\frac9{68}} 
       \\&\indeq\indeq\indeq\indeq\indeq\indeq\indeq\indeq\indeq
        + 
       \Vert \alpha \Vert_{L^{8}(0,t)}^{\frac{15}{17}} 
       \beta(t)^{\frac{21}{34}}
       + 
        \Vert \alpha \Vert_{L^{8} (0,t)} 
        \beta(t)^{\frac12}
         +
         \Vert \alpha \Vert_{L^{8}(0,t)}^{\frac32}   
     \biggr) 
  \\&\indeq
\colb
     \lec 
        C_\epsilon 
       |Q|^{\frac{q}3} (1+t)^{C} 
        \left( \Vert \alpha \Vert_{L^{8}(0,t)}^{3} + \Vert \alpha \Vert_{L^{8}(0,t)}^{\frac{3}{2}}  \right)  
  +  
   \epsilon |Q|^{\frac{q}{3}} \beta(t) ,
  \end{split}
\colb
    \llabel{EQ87}  
  \end{equation} 
where we used $|Q|\gec 1$ to remove $|Q|^{\frac{q-2}{6}}$ and other non-positive powers of $|Q|$ in the parentheses in the first inequality.

Next,
  \begin{equation}\begin{split}
    & \int_0^t \int p_{\nonloc,\Helm} \,u\cdot \na \phi_Q 
      \lec |Q|^{-\frac13}  
          \int_0^t
            \Vert u(s) \Vert_{L^1(Q^{*})} \Vert p_{\nonloc,\Helm}(s) 
            \Vert_{L^\infty (Q^{*})} 
          \d s
  \\&\indeq
  \lec  |Q|^{\fract{q}3-\fract{5}{6}}  \int_0^t\Vert u(s) \Vert_{L^2(Q^{*})}  
    \Vert u(s) \Vert_{M_{\cn}^{2,q}}^2 \d s  
  \lec     
   |Q|^{\frac{q}2-\fract{5}{6}}   
   \int_0^t \alpha^{\frac32}(s)  \d s  
  \\&\indeq
   =
\colb
      |Q|^{\frac{q}{3}}
      |Q|^{\frac{q}6-\fract{5}{6}} 
      \Vert \alpha \Vert_{L^{\frac32}(0,t)}^{\frac32}  
\colb
  .
   \end{split}
   \label{EQ88}  
  \end{equation}

Using 
the estimate for 
$p_{\harm,\geq 1}$ in
\eqref{EQ37} with
$\gamma = 1/2$, $r=3/2$ and $\delta=1/2$, we have
  \begin{equation}\begin{split}
    &  \int_0^t \int p_{\harm,\geq 1} \,u\cdot \na \phi_Q 
       \lec |Q|^{-\frac13} \Vert u \Vert_{L^3((0,t);L^1 (Q^*))}  \Vert p_{\harm,\geq 1} \Vert_{L^{\frac32} ((0,t); L^{\infty} (Q^* ))}
    \\&\indeq
       \lec |Q|^{\frac16} \Vert u \Vert_{L^3((0,t);L^2 (Q^*))}  
            \Vert p_{\harm,\geq 1} \Vert_{L^{\frac32} ((0,t); L^{\infty} (Q^* ))}
    \\&\indeq
      \lec  
         |Q|^{\frac{1}{6}+\fract{q}{6}} \Vert \alpha  \Vert^{\fract{1}{2}}_{L^{\fract{3}{2}}(0,t)}   \Vert p_{\harm,\geq 1} \Vert_{L^{\frac32} ((0,t); L^{\infty} (Q^* ))}
   \\&\indeq
     \lec  |Q|^{\fract{q}{2}-\fract{11}{18}} (1+t)^{\frac12} \Vert \alpha  \Vert^{\fract{1}{2}}_{L^{\fract{3}{2}}(0,t)}      
           \left( 
              \Vert \alpha \Vert_{L^{3}(0,t)}^{\fract{1}{2} }  \beta(t)^{\fract{1}{2} } +(1+t^{\frac12})\Vert \alpha \Vert_{L^{3/2} (0,t)}  
         \right)
   \\&\indeq
     \lec 
\colb
       C_\epsilon |Q|^{\frac{q}{3}}|Q|^{\frac{q}{3}-\fract{11}{9}} (1+t)^C  \Vert \alpha \Vert^{\frac32}_{L^{3} (0,t)} 
       + \epsilon |Q|^{\frac{q}{3}} \beta (t)
\colb
  ,
  \end{split}
    \llabel{EQ89}  
  \end{equation}
and, from the last bound in \eqref{EQ37},
  \begin{equation}\begin{split}
  &
  \int_0^t \int p_{\harm,\leq 1} \,u\cdot \na \phi_Q \lec |Q|^{-\frac13}  \int_0^t\| u(s) \|_{L^1(Q^*)} \| p_{\harm,\leq 1}(s) \|_{L^\infty (Q^*)} \d s
  \\&\indeq
    \lec |Q|^{\frac{q}{3}-\fract{13}{12}}  
       \int_0^t\| u(s) \|_{L^2(Q^*)} \alpha(s) s^{\frac{3}{8}}  \d s
  \lec |Q|^{\frac{q}2-\frac{13}{12}}   \int_0^t \alpha(s)^{\frac32}s^{\frac{3}{8}} \d s 
  \\&\indeq
\colb
  \lec  
    |Q|^{\frac{q}{3}}    |Q|^{\frac{q}6-\frac{13}{12}} t^{\frac{19}{16}}  \| \alpha \|_{L^8 (0,t)}^{\frac32}
\colb
  ,
  \end{split}
    \label{EQ90}  
  \end{equation}
and the proof is complete.
\end{proof}

The following lemma contains the necessary barrier statement needed for the global existence.

\cole
\begin{Lemma}\label{L10}
Suppose that $f\in L^\infty_{\loc}  ([0,T_0);[0,\infty ))$ 
for  $\overline{p},\overline{q} >1$, and $p\in [1,\infty)$
satisfy
  \begin{equation}
    f(t) 
     \leq 
       a
      + b(1+t)^\gamma  
      \left( 
        \Vert f \Vert_{L^p (0,t)}^{\overline{p}} 
        +         \Vert f \Vert_{L^p (0,t)}^{\overline{q}} 
        +      c  \Vert f \Vert_{L^p (0,t)}
     \right),
    \llabel{EQ91}   
  \end{equation}
where $\gamma\geq0$ and $a,b,c> 0 $.
Then 
  \begin{equation}
   f(t)\leq 2a   
   ,
   \llabel{EQ92}
  \end{equation}
 for $t\leq \min\{T,T_0\}$, where $T>0$ is the solution of
  \begin{equation}
   a=
       b(1+T)^{\gamma}
       \bigl( 
        (2a)^{\overline{p}} T^{\fract{\overline{p}}{p}}
        +  (2a)^{\overline{q}} T^{\frac{\overline{q}}{p}} 
        + 2 a c T
       \bigr).
    \label{EQ93}  
  \end{equation}
 \end{Lemma}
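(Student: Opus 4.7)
The plan is a standard bootstrap (continuity) argument aimed at the target bound $f(t) \leq 2a$. First, I would introduce the barrier time
\[
T^\star := \sup\Bigl\{ t \in [0, \min(T, T_0)] : \esssup_{s \in [0,t]} f(s) \leq 2a \Bigr\},
\]
and show $T^\star > 0$. For this, observe that the three terms on the right-hand side of the hypothesis all vanish as $t \to 0^+$ (being $L^p(0,t)$-type norms of a locally bounded function), so $f(t) \leq a + o(1)$ as $t \to 0^+$; since $f \in L^\infty_{\mathrm{loc}}$, this forces $T^\star > 0$.

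Next I would argue by contradiction, assuming $T^\star < \min(T, T_0)$. For every $t \leq T^\star$, the barrier bound gives the crude a~priori estimate $\|f\|_{L^p(0,t)} \leq 2a\, t^{1/p}$. Plugging this into the hypothesis yields
\[
f(t) \leq a + b(1+t)^\gamma \bigl((2a)^{\overline p} t^{\overline p/p} + (2a)^{\overline q} t^{\overline q/p} + 2ac\, t^{1/p}\bigr) \quad \text{for a.e.\ } t \leq T^\star.
\]
The bracket on the right is monotone increasing in $t$, and the defining identity \eqref{EQ93} of $T$ is precisely designed to make the entire right-hand side equal to $2a$ at $t = T$. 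Hence for any $t \leq T^\star < T$ the right-hand side is strictly less than $2a$, which by right-continuity of $t \mapsto \|f\|_{L^p(0,t)}$ and of $t \mapsto (1+t)^\gamma$ allows me to extend $f \leq 2a$ slightly past $T^\star$, contradicting its maximality. This forces $T^\star = \min(T, T_0)$ and yields the conclusion.

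The main technical obstacle is the mild regularity of $f$ (only $L^\infty_{\mathrm{loc}}$, not continuous), which requires the bootstrap to be phrased in terms of essential suprema and the use of right-continuity of the dominating function rather than pointwise continuity of $f$. A minor bookkeeping point is the exponent of $T$ in the last summand of \eqref{EQ93}: the natural bound obtained from $\|f\|_{L^p(0,t)} \leq 2a\, t^{1/p}$ gives $T^{1/p}$, and the stated form is either a coarse dominant (when $T \geq 1$) or is recovered by absorbing the discrepancy into the constants $b$ and $c$ at the point where Lemma~\ref{L10} is applied via Lemma~\ref{L09}.
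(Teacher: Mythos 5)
Your proposal is correct and is exactly the argument the paper invokes: the authors take $g\equiv 2a$ as a supersolution and refer to ``a standard barrier argument,'' which is precisely the continuation argument you spell out via $T^\star$. Your flag on the last summand is also warranted: plugging $\|g\|_{L^p(0,t)}=2a\,t^{1/p}$ into the reversed inequality naturally produces $T^{1/p}$ rather than $T$ in \eqref{EQ93}, so the equation as written yields the supersolution property only when $T\geq 1$ or $p=1$; this is harmless where the lemma is applied (Proposition~\ref{Pro01}, where only the scaling of $T$ as $a\to0$ and $c\to 0$ matters), but the exponent should really be $1/p$.
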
 
\colb

Observe that $T\to \infty$ if $a\to 0$ and $c\to 0$.

\begin{proof}[Proof of Lemma~\ref{L10}]
By \eqref{EQ93}, the function $g(t)=2a$ satisfies
  \begin{equation}
    g(t) 
     \geq 
       a
      + b(1+t)^\gamma  
      \left( 
        \Vert g \Vert_{L^p (0,t)}^{\overline{p}} 
        +         \Vert g \Vert_{L^p (0,t)}^{\overline{q}} 
        +       c \Vert g \Vert_{L^p (0,t)}
     \right),
   \llabel{EQ94}
  \end{equation}
for $t\in[0,T_1]$, where $T_1=\min\{T,T_0\}$. The inequality $f(t)\leq g(t)$ for $t\in [0,T_1]$ follows by a standard barrier argument.
\end{proof}

\begin{proof}[Proof of Proposition~\ref{Pro01}]
Let $Q\in \mathcal{C}_n$. 
Using the local energy inequality \eqref{EQ155} with $\phi(x,t)=\phi_Q(x)\psi_m(t)$ 
where $\psi_m$ is a suitable sequence of functions,
and weak continuity in time, we obtain 
  \begin{equation} 
  \begin{split}
  &\int |u(t)|^2 \phi_Q + 2\int_0^t \int |\na u |^2 \phi_Q \leq  \int |u(0) |^2 \phi_Q + \int_0^t \int |u|^2 \De \phi_Q +\int_0^t (|u|^2 + p ) u\cdot \na \phi_Q 
  \end{split}
   \label{EQ95}
  \end{equation}
from where, using \eqref{EQ156} and Lemma~\ref{L09}. 
  \begin{equation} 
  \begin{split}
  &\int |u(t)|^2 \phi_Q + 2\int_0^t \int |\na u |^2 \phi_Q 
  \\&\indeq
   \leq 
    C  |Q|^{\frac{q}3}\alpha (0)  
      +C |Q|^{\frac{q}{3}-\frac{2}{3}}\int_0^t \alpha (s) \d s
      + C |Q|^{\frac{q}{3}-\frac{1}{6}}       t^{\fract{3}{16}} \alpha(0)^{1/2} \| \alpha \|_{L^{8}(0,t)}^{1/2} 
   \\&\indeq\indeq
    +   C_\epsilon |Q|^{\frac{q}3} (1+t)^{C}
          \left( \| \alpha \|_{L^{8}(0,t)}^{3} 
              + \| \alpha \|_{L^{8}(0,t)}^{\frac{3}{2}}  
          \right)
              + \epsilon |Q|^{\frac{q}3} \beta (t) 
   ,
  \end{split}
    \llabel{EQ96}  
  \end{equation}
for all $t>0$.
Dividing by $|Q|^{\fract{q}{3}}$, taking $\sup_{Q\in \cn }$, and absorbing the last term on the right-hand side, we obtain
  \begin{equation} 
  \begin{split}
    \alpha (t) + \beta (t) 
         &\leq 
            C  \alpha (0) 
           + C t^{\frac{1}{8}}2^{-2n} \Vert \alpha \Vert_{L^{8}(0,t)}
           + C 2^{-\frac{n}{2}}     t^{\fract{3}{16}}   \alpha(0)^{1/2} \| \alpha \|_{L^{8}(0,t)}^{1/2} 
         \\&\indeq
          +C (1+t)^{\gamma} 
        \Bigl( \Vert \alpha \Vert_{L^{8}(0,t)}^{3} 
           + \Vert \alpha \Vert_{L^{8}(0,t)}^{\frac{3}{2}}  
         \Bigr)
   ,
  \end{split}
   \llabel{EQ98}
  \end{equation}
 where $\gamma\geq 0$ is a constant and we used $|Q|\gtrsim
 2^{3n}$. Applying the Cauchy-Schwarz inequality on the third term, we get
  \begin{equation} 
  \begin{split}
    \alpha (t) + \beta (t) 
         &\leq 
            C  \alpha (0) 
          +C (1+t)^{\gamma} 
        \Bigl( \Vert \alpha \Vert_{L^{8}(0,t)}^{3} 
           + \Vert \alpha \Vert_{L^{8}(0,t)}^{\frac{3}{2}}  
           + 2^{-n}\Vert \alpha \Vert_{L^{8}(0,t)}
         \Bigr)
   ,
  \end{split}
   \label{EQ97}
  \end{equation}
The claim now follows from Lemma~\ref{L10}, applied with 
$f(s)= \alpha (s) + \beta(s) $, 
$a= C_0\alpha(0) + C_0 2^{-\frac{3n}{2}}$, 
$b = C_0$,
$c=2^{-n}$
$p=8$,
$\overline{p}=3$,
and $\overline{q}=3/2$,
where $C_0$ is the  constant in~\eqref{EQ97}. Note that the definition \eqref{EQ93} of $T$ given by the lemma then becomes 
\eqref{EQ78}, as required. 
\end{proof}

\begin{proof}[Proof of Theorem~\ref{T01}]
Theorem~\ref{T02} follows by showing that
the solution $T$ of \eqref{EQ78} satisfies 
\eqref{EQ10}.
\end{proof}

\begin{Remark}\label{R01}
{\rm
Using the Gagliardo-Nirenberg inequality, we have
  \begin{equation}
     \frac 1 {|Q|}\int_0^{T_n}\int_Q |u|^3\,dx\,dt 
       \lec
         T_n^{\frac{1}{4}} 
         |Q|^{\fract{q-2}{2}}
         \alpha(T_n)^{\fract{3}{4}}
         \beta(T_n)^{\fract{3}{4}}
       +
|Q|^{\frac{q-3}2}
               \Vert \alpha\Vert_{L^{3/2}(0,T_n)}^{\fract{3}{2}}
      .
    \llabel{EQ99}  
  \end{equation}
Using the bound \eqref{EQ11}, we get
  \begin{align}
   \begin{split}
     \sup_{Q\in \mathcal C_n} 
         \frac 1 {|Q|^{\frac{q}{2}}}\int_0^{T_n}\int_Q |u|^3\,dx\,dt 
      \lec
      T_n^{\fract{1}{4}} 
      \|u_0\|_{\MMnq}^3 + \frac{
                          T_n
                             }{
                          2^{3n/2}
                         }
                         \|u_0\|_{\MMn}^3
   ,
   \end{split}
   \llabel{EQ100}
  \end{align}
where $n$ and $T_n$ are as in Theorem~\ref{T02}.
Similarly, we have
  \begin{equation}
     \frac 1 {|Q|^{\fract{10}{9}}}\int_0^{T_n}\int_Q |u|^{\fract{10}{3}}\,dx\,dt 
       \lec
         |Q|^{\frac{5q-10}{9}}
         \alpha(T_n)^{\fract{2}{3}}
         \beta(T_n)
       +
         |Q|^{\frac{5q-16}{9}}
         \Vert \alpha\Vert_{L^{5/3}(0,T_n)}^{5/3}
      ,
   \llabel{EQ101}
  \end{equation}
which gives
  \begin{align}
   \begin{split}
     \sup_{Q\in \mathcal C_n} \frac 1 {|Q|^{\frac{5q}{9}}}\int_0^{T_n}\int_Q |u|^{\fract{10}{3}}\,dx\,dt 
      \lec
          \left(
          1
         + 
         \frac{T_n}{2^{2n}}
          \right)
        \|u_0\|_{\MMnq}^{\fract{10}{3}}
   \end{split}
   \llabel{EQ102}
  \end{align}
by analogous arguments. 
}
\end{Remark}

\section{Stability}
\label{sec05}

In this section, we prove the stability theorem.
In the following, we use the notation
  \begin{equation}
      \|(f,g)\|_X = 2\max \{\|f\|_X,\|g\|_X\}   
   .
   \llabel{EQ103}
  \end{equation}

\begin{proof}[Proof of~Theorem~\ref{T03}]
Fix $n\in \NN$. Since $u_0^{(k)}\to u_0$ in $\MM$ and by \eqref{EQ06}, we have that for any $\epsilon>0$, there exists $K(n)$ so that 
\[
\| u_0^{(k)} \|_{\MMn} < \epsilon,
\]
for all $k\geq K(n)$. Let  $T_n$ be given by \eqref{EQ78} for
$\|u_0\|_{\MMn}$, and let $T_n^{(k)}$ be the same for
$\|u_0^{(k)}\|_{\MMn}$. The above observation implies that, for
sufficiently large $k$, we have $T_n^{(k)}\geq T_n/2$. For the
finitely many remaining $k$ we may not have $T_n^{(k)} \geq T_n/2$,
but we do have $T_{n'}^{(k)} \geq T_n/2$ by taking $n'$ sufficiently
large due to inclusion in $\MC$. Hence, we can apply Theorem~\ref{T02}
to conclude that $u^{(k)}$ are uniformly bounded in $L^\I(0,T_n/2;
L^2(B_n\cap \HH)) \cap L^2(0,T_n/2;H^1(B_n\cap \HH))$, where
$\{B_n\}_{n\geq 1}$ is a sequence of expanding balls
(e.g.~$B_n=B(0,n)$). Note that $T_n$ is an unbounded, non-decreasing
sequence.

We  need to show that $\partial_t u^{(k)}$ are uniformly bounded in the dual of $L^5(0,T_n/2; W_0^{1,3}(B_n))$.  Let $Q$ be a cube containing $B_n$.  Based on local estimates for $u^{(k)}$ and pressure estimates in Section \ref{sec03}, this follows nearly identically to \cite[p.~561]{MMP1}. The only added work here involves  our treatment of $p^{(k)}_{\loc,\harm}$ for which we estimate
\EQ{
\bigg| \int_0^{T_n/2} \int_{\BB_n} p^{(k)}_{\loc,\harm} \nabla \cdot \phi\,dx\,dt   \bigg| &\leq \bigg(  \int_0^{T_n/2}\bigg(  \int_Q |p^{(k)}_{\loc,\harm}|^{17/10}\,dx\bigg)^{\frac {10} {17} \frac 3 2} \,dt  \bigg)^{2/3} 
\\&\qquad \cdot \bigg( \int_0^{T_n/2}\bigg(  \int_Q |\nabla \phi |^{\frac {17} 7}\,dx\bigg)^{\frac {7} {17} 3} \,dt\bigg)^{\frac 1 3}
\\&\lesssim_{Q,T_n} \bigg(  \int_0^{T_n/2}\bigg(  \int_Q |p^{(k)}_{\loc,\harm}|^{17/10}\,dx\bigg)^{\frac {10} {17} \frac 3 2} \,dt  \bigg)^{2/3} \| \nabla \phi \|_{L^5(0,T_n/2; L^3(B_n))},
   \llabel{EQ05}
}
where we have used H\"older's inequality in both space and time variables---note that $\phi$ is as in \cite{MMP1}. The terms involving $p^{(k)}_{\loc,\harm}$ are uniformly bounded by Lemma~\ref{L04}. Hence,  following \cite[p.~561]{MMP1}, we have obtained the claimed uniform bound on $\partial_t u^{(k)}$.

By repeatedly applying the Lions-Aubin lemma and using a diagonalization argument, we obtain that there exists   $u\colon\HH\times (0,\I) \to \mathbb R^3$  such that, for every $n\in \NN$,
  \begin{equation}\begin{split}
   &u^{(k)} \to  u \quad\text{ weakly-$*$} \text{ in }L^\I(0,T_n/2; L^2(B_n)),
   \\& u^{(k)} \to  u \quad \text{ weakly in }L^2(0,T_n/2;H^1(B_n)),
   \\& u^{(k)}\to u\quad \text{ strongly in }L^2(0,T_n/2;L^2(B_n)),
   \end{split}
   \llabel{EQ104}
   \end{equation}
after passing to a subsequence of $\{u^{(k)}\}$.
By interpolation, the strong convergence can be extended to 
  \begin{equation}\label{EQ105}
   u^{(k)}\to u \text{ strongly in }L^q(0,T_n/2;L^p(B_n)),
  \end{equation}
for any $p,q>2$ such that $2/q+3/q<3/2$ and $p<6$.  By
  Remark~\ref{R01}, the convergence in $L^3(0,T_n/2;L^3(B_n))$ implies  
  \begin{equation}
   \sup_{Q\in \mathcal C_n} \frac 1 {|Q|}\int_0^{T_n/2}\int_Q |u|^3\,dx\,dt \leq  C T_n \|u_0\|_{\MMn}^3 
            + \frac{C T_n}{2^{3n/2}} \|u_0\|_{\MMn}^2,
   \llabel{EQ106}  
  \end{equation}
since this estimate is satisfied by all $u^{(k)}$ for sufficiently large $k$.

 Next, with $\alpha$ and $\beta$ in~\eqref{EQ08} for $u$, we check that 
$\alpha(T)+\beta(T)<\I$ for every $T>0$.   Given $T$, choose $n$ so that $T<T_n/2$ where $T_n$ is as above. The convergence properties listed above on $B_{n'}\times (0,T_{n'}/2)$ for $n'\geq n$ extend to any $Q\in \mathcal C$. Hence, for any such $Q$,
\EQ{
	\sup_{0<s<T} \frac 1 {|Q|^{1/3}} \|u(s)\|_{L^2(Q)} 
&\leq   \limsup_{k\to \I} 	\sup_{0<s<T} \frac 1 {|Q|^{1/3}} \|u^{(k)}(s)\|_{L^2(Q)},
   \llabel{EQ107}   
}
which is uniformly bounded in $k$ by the  bounds for $u^{(k)}$ in
$\MM$.   For the gradient terms, the weak convergence implies
\[
\frac 1 {|Q|^{2/3}}\int_0^T\int_Q |\nabla u|^2\,dx\,dt\leq \liminf_{k\to \I} \frac 1 {|Q|^{2/3}} \int_0^T\int_Q |\nabla u^{(k)}|^2\,dx\,dt. 
\]
We now take a supremum over $Q\in \mathcal C$   to obtain the boundedness of $\alpha(T)$ and $\beta(T)$ for every $T>0$.

Given a bounded open set $\Omega \subset \HH$ we define $p$ via the local pressure expansion,
\[
p\coloneqq   p_{\li,\loc} + p_{\li,\nonloc} + p_{\loc,\Helm}+ p_{\loc,\harm}+ p_{\nonloc,\Helm}+ p_{\harm, \leq 1}+ p_{\harm, \geq 1},
\]
where we use the formulas \eqref{EQ20}, \eqref{EQ26}, \eqref{EQ32}, \eqref{EQ34}, and \eqref{EQ35} for each of the pressure parts. We similarly define $p^{(k)}$ as the local pressure expansion of each $u^{(k)}$, where $k\geq 1$, and we set
\[
\overline{p}^{(k)}= p^{(k)} - p.
\]
We also use analogous notation for the differences between each part of the local pressure expansion. We now claim that, for every compact set $K \subset \overline{ \Omega } $,

\begin{itemize}
\item the following parts converge to zero in $L^{\fract{3}{2}}(0,T;L^{\fract{3}{2}} (K))$: $\bar p^{(k)}_{\loc,\Helm}$, $ \bar p^{(k)}_{\nonloc,\Helm}$, $\bar p^{(k)}_{\harm, \geq 1}$, $\bar p^{(k)}_{\harm, \leq 1}$,
\item the following parts converge to zero in $L^{p}(0,T;L^2(K))$ for $p<\fract{4}{3}$: $\bar p^{(k)}_{\li,\loc}$,\, $\bar p^{(k)}_{\li,\nonloc}$.  
\item the sequence $\bar p^{(k)}_{\loc,\harm}$ is bounded in $L^{p}(0,T;L^{p} (K ))$ for every $p< 3/2$ and that $\| \bar p^{(k)}_{\loc,\harm}\|_{L^{\frac32}(0,T;L^{\frac32} (\tilde{K} ))} \to 0 $ as $k\to \infty$ for every $\tilde{K} \Subset \HH$ (i.e., locally away from the boundary).
\end{itemize}
These convergence properties guarantee that $(u,p)$ satisfies the Navier-Stokes equations \eqref{EQ01} in the sense of distributions on $\Omega$ as well as the local energy inequality \eqref{EQ155} for non-negative test functions $\phi\in C_0^\I(({\Omega}\cap \overline{\HH})\times [0,\I))$. Indeed, using the above convergence modes (of $u^{(k)}$ and of each part of the pressure function) the only nontrivial convergence is 
\[
\int_0^t \int  p_{\loc , \harm }^{(k)} u^{(k)} \cdot \nabla \phi \,dx\,ds\to \int_0^t \int p_{\loc , \harm } u \cdot \nabla \phi \,dx\,ds,
\]
for each $t>0$ and each nonnegative $\phi\in C_0^\I(\overline{\HH}\times [0,\I))$. For this, let $K\Subset \overline{\HH}$ be such that $\supp \phi (s) \subset K$ for all~$s$, fix $\epsilon >0$, and let $\tilde{K} \Subset \HH$ be such that 
\[\| u\|_{L^{\frac{19}6} ((K \setminus \tilde{K})\times (0,t))}\leq \epsilon \left( 2 \| (p_{\loc , \harm }^{(k)}, p_{\loc , \harm }) \|_{L^{\frac{19}{13}} (K\times(0,t) )} \right)^{-1}
.
\]
Then
\[\begin{split}
&\int_0^t \int_K \left|  p^{(k)} u^{(k)} - p u \right|\,dx\,ds \leq \int_0^t \int_K \left| u ( p^{(k)}  - p ) \right| \,dx\,ds+ \int_0^t \int_K \left|  (u^{(k)}-u ) p^{(k)}  \right|\,dx\,ds 
\\&\indeq
\leq \| u\|_{L^{\frac{19}6} ((K \setminus \tilde{K})\times (0,t))} \| (p^{(k)}, p) \|_{L^{\frac{19}{13}} }  + \| u \|_{L^{19/6}} \| p^{(k)} - p \|_{L^{\frac{19}{13}} (\tilde K\times(0,t) )} 
 + \| u^{(k)}- u \|_{L^{\frac{19}6}}\| p^{(k)} \|_{L^{\frac{19}{13}}}  
\\&\indeq
\leq \frac{\epsilon }2 + C \left( \| p^{(k)} - p \|_{L^{\frac{19}{13}} (\tilde K \times (0,t))}  + \| u^{(k)}- u \|_{L^{\frac{19}6}}\right) 
\\&\indeq
\leq \epsilon 
\end{split}
\]
for a sufficiently large $k$, as required, where for brevity we omitted the notation ``$\loc,\harm$'' and we set $L^q= L^q( K\times (0,t))$. 

Moreover, the local pressure expansion for $u$ (recall Definition \ref{D02}(6)) then follows for every open and bounded $\Omega \subset \HH$ by the uniqueness argument as in \eqref{EQ36}. The remaining properties of local energy solutions (i.e., that $u(t)\to u_0$ in $L^2_{\loc}$ and that $u(t)$ is weakly continuous in $L^2_{\loc}$) can be proven using well-known techniques, see e.g.~\cite{KiSe,KwTs}.\\

We now fix $K\Subset \HH$ and prove the convergence properties listed above. 

For $p_{\li,\loc }$ and $p_{\li,\nonloc}$, we have 
  \begin{equation}
   \| \bar p_{\li,\loc}^{(k)} (t) \|_{L^2 (K)}\lec_{K,Q}  \|u_0^{(k)}-u_0  \|_{L^2(Q^{*})}  t^{-\frac{3}{4}}
   \llabel{EQ108}  
  \end{equation}
and 
  \begin{equation}
   \| \bar p_{\li,\nonloc}^{(k)} (t)   \|_{L^\infty (K)}\lec_{K,Q}  \| u_0^{(k)}-u_0  \|_{M^{2,2}_{\mathcal C}}  t^{-\frac{3}{4}},
   \llabel{EQ109}  
  \end{equation}
as in Lemmas~\ref{L01} and~\ref{L02}. Since $u_0^{(k)}\to u_0\in   M^{2,2}_{\mathcal C}$, it follows that $p_{\li}^{(k)}\to p_{\li} \in L^{p} (0,T;L^2(Q))$ for every $T<\I$ and $p<\fract{4}{3}$.

The local part of the pressure is expanded into the harmonic and the Helmholtz parts. For the Helmholtz part, we have
  \begin{align}
  \begin{split}
   \int_0^T\| \bar p_{\loc,\Helm}^{(k)}(t)\|_{L^{\fract{3}{2}}(\HH)}^{\fract{3}{2}}\,dt &\lesssim \int_0^T \|(u^{(k)}\otimes u^{(k)}  - u\otimes u)(t)\|_{L^{\fract{3}{2}}(Q^{***})}^{\fract{3}{2}}\,dt
    \\&\lesssim \| (u,u^{(k)})\|^{\fract{3}{2}}_{L^3(0,T;L^3(Q^{***}))} \| u^{(k)}-u\|^{\fract{3}{2}}_{L^3(0,T;L^3(Q^{***}))},
  \end{split}
   \llabel{EQ110}  
  \end{align}
where the first inequality follows as in Lemma~\ref{L03}. The right-hand side vanishes as $k\to \I$ for every $T<\I$ by \eqref{EQ105}. Thus
$p_{\loc,\Helm}^{(k)}\to p_{\loc,\Helm}\in L^{3/2}(0,T;L^{3/2}(Q))$. \\

For $\overline{p}^{(k)}_{\loc,\harm}$, we have 
\eqnb\llabel{EQ111}
p_{\loc,\harm} (x,t) =p_A+p_B :=\frac{1}{2\pi i }\int_0^t \int_\Gamma \ee^{-(t-s)\la } \int_{\HH} q_{\la} (x'-z',x_3,z_3 ) \cdot ( F_A(z,s)+F_B(z,s) ) \d z \, \d \la \,\d s
  ,
\eqne
where, recalling \eqref{EQ32}, $F_A$ is a 2D vector function whose components are sums of terms of the form $  \p_j \left( \chi_{**} u_k u_l \right)=:  \p_j f_A $, where $j,k,l\in \{ 1,2,3 \}$, and $F_B(z,s)=F_B(z',z_3,s)$ is a sum of 2D~vectors of the form
  \[\begin{split}
&  m(D') \na' \otimes \na' \int_0^\infty  \left( \left( P(\cdot,|z_3 -y_3|) + P(\cdot,z_3+y_3) \right)\ast (\chi_{**} v\otimes w ) (y_3) \right)(z',s) \d y_3 
  \\&\indeq
   =: \nabla'  \otimes f_B(z,s)
   ,
  \end{split}
  \]
where $v$ and $w$ denotes various 2D vectors whose components are chosen among $u_1$, $u_2$, or $u_3$;
also,  $m(D')$ denotes a multiplier in the horizontal variable $z'$ that is homogeneous of degree $0$, and $\widehat{P}(\xi', t) \coloneqq \ee^{-t|\xi' |}$,
i.e., $P$ is the 2D~Poisson kernel.
Thus, using $\|m(D') P(\cdot , s) \|_{L^1 (\RR^2 )} \lec 1$,
which is a consequence of \eqref{EQ66} and $|y_3|\lec_Q 1$, we obtain
\[
\|f_B(z',z_3,s)\|_{L^p_{z'}(\RR^2)} \lec_Q \| \nabla' (\chi_{**} v\otimes w )(s) \|_{L^p(\HH)} 
\]
for every $z_3>0$, $s>0$, $p\geq 1$.
Thus, by \eqref{EQ23}, we get 
\eqnb\label{EQ112}\begin{split}
\| p_\BB (t) \|_{L^p (\HH)} &\lec \int_0^t \int_\Gamma \ee^{(t-s)\mathrm{Re}\,\lambda} \left\| \int_0^\infty \ee^{-|\lambda |^{\frac12} z_3} \left\| \int_{\RR^2 } (|x'-z'|+z_3+x_3)^{-3} f_B(z,s) \d z'\right\|_{L^p_{x'}(\RR^2)} \d z_3\right\|_{L^p_{x_3}}\!\!\!\d |\lambda | \,\d s\\
&\lec \int_0^t\| \nabla' (\chi_{**} v\otimes w )(s) \|_{L^p(\HH)}  \int_\Gamma \ee^{(t-s)\mathrm{Re}\,\lambda}  \int_0^\infty \ee^{-|\lambda |^{\frac12} z_3} \left\| (x_3+z_3)^{-1} \right\|_{L^p_{x_3}(0,\infty )} \d z_3\,\d |\lambda | \,\d s\\
&\lec \int_0^t\| \nabla' (\chi_{**} v\otimes w )(s) \|_{L^p(\HH)}  \int_\Gamma \ee^{(t-s)\mathrm{Re}\,\lambda}  \int_0^\infty \ee^{-|\lambda |^{\frac12} z_3} z_3^{-1+1/p} \d z_3\,\d |\lambda | \,\d s  \\
&\lec \int_0^t\| \nabla' (\chi_{**} v\otimes w )(s) \|_{L^p(\HH)}  \int_\Gamma \ee^{(t-s)\mathrm{Re}\,\lambda}  |\lambda |^{-\frac{1}{2p}}\,\d |\lambda | \,\d s\\
&\lec \int_0^t\| \nabla' (\chi_{**} v\otimes w )(s) \|_{L^p(\HH)}  (t-s)^{\frac{1-2p}{2p}} \,\d s,
\end{split}
\eqne
where we used $\int_{\RR^2} (|y'|+a)^{-3} \d y' \lec a^{-1}$ 
and $\int_0^\infty \ee^{-|\lambda |^{\frac{1}2}v} v^b \d v \lec |\lambda |^{-\frac{1+b}2}$. 
Thus 
\[
\| p_\BB  \|_{L^p ((\HH)\times(0,T))} \lec \| \nabla' (\chi_{**} v\otimes w )(s) \|_{L^1((0,T);L^p(\HH))} T^{-1+\frac{3}{2p}}
\]

Note that we have 
\eqnb\llabel{EQ113}\begin{split}\| \nabla' (\chi_{**} v\otimes w )(s) \|_{L^1((0,T);L^p(\HH))} &\lec_Q  \int_0^T \| u(s) \|_{L^{\frac{2p}{2-p}}}\|\nabla u (s) \|_{L^2(Q^{***})}\d s+ \| u \|_{L^2((0,T);L^{2p} (Q^{***}))}^2 \\
&\lec  \| u \|_{L^2((0,T);L^{\frac{2p}{2-p}} (Q^{***}))} \| \nabla u \|_{L^2((0,T);L^{2} (Q^{***}))}+ \| u \|_{L^2((0,T);L^{2p} (Q^{***}))}^2 ,
\end{split}
\eqne
which is bounded due to \eqref{EQ105} for $p<3/2$. Thus both $p_\BB $ and $p_\BB^{(k)}$, $k\geq 1$, are bounded in  $L^{p}(0,T;L^{p} (K ))$ for every $p< 3/2$. On the other hand, on a set $\tilde K=K'\times K_3$ compactly embedded in $\HH$
\[\begin{split}
\| \overline{p}^{(k)}_B (\cdot , x_3, t) \|_{L^{\frac{3}2}_{x'} (K')} &\lec  \int_0^t\| |u^{(k)}(s)|^2 - |u(s)|^2 \|_{L^{\frac32}(Q^{***})}  \int_\Gamma \ee^{(t-s)\mathrm{Re}\,\lambda}  \int_0^\infty \ee^{-|\lambda |^{\frac12} z_3} (x_3+z_3)^{-2}  \d z_3\,\d |\lambda | \,\d s\\
&\lec_K  \int_0^t\| |u^{(k)}(s)|^2 - |u(s)|^2 \|_{L^{\frac32}(Q^{***})}  \int_\Gamma \ee^{(t-s)\mathrm{Re}\,\lambda}  |\lambda |^{-\frac12} \,\d |\lambda | \,\d s\\
&\lec   \int_0^t\| |u^{(k)}(s)|^2 - |u(s)|^2 \|_{L^{\frac32}(Q^{***})}  (t-s)^{-\frac{1}2}\,\d s
\end{split}
\]
for every $x_3\in K_3$ and $t>0$, where the first inequality follows in the same way as the first two inequalities in \eqref{EQ112}, except that we now put both derivatives from $\nabla'\otimes \nabla '$ onto $q_\lambda$ (rather than one onto $q_\lambda$ and one onto $(\chi_{**} v\otimes w)$, and the second inequality follows simply by bounding $x_3+z_3\gtrsim_K 1$. Thus 
\[
\| \overline{p}^{(k)}_B  \|_{L^{\frac{3}2} ((0,T); L^{\frac32}  (K))} \lec_K T^{\frac12} \| |u^{(k)}(s)|^2 - |u(s)|^2 \|_{L^{{\frac{3}2}} ((0,T); L^{\frac{3}2}  (Q^{***}))}
\]
for every $T>0$, which vanishes in the limit $k\to \infty$ due to \eqref{EQ105}.

For $\overline{p}^{(k)}_A $, we have that $ f_A^{(k)}$ converges to $f_A$ in $L^{\frac{3}2} ((0,T);L^{\frac{3}2}(\HH)$ by \eqref{EQ105}; recall that $f_A$ is a sum of the term of the form $\chi_{**} u_lu_m$, $l,m=1,2,3$, and $f_A^{(k)}$ is defined analogously.  For every $t\in (0,T)$, $T>0$, and every $q\in (3/2 ,5/3)$,
\[\begin{split}
\| {p}_A (t) \|_{L^{\frac32} (K)} &\lec \int_0^t \int_\Gamma \ee^{(t-s)\mathrm{Re}\,\lambda} \left\| \int_0^\infty \ee^{-|\lambda |^{\frac12} z_3} \left\| \int_{\RR^2 } (|x'-z'|+z_3+x_3)^{-3} f_A (z,s) \d z'\right\|_{L^{\frac32}_{x'}(\RR^2)} \d z_3\right\|_{L^{\frac32}_{x_3}}\!\!\d |\lambda | \,\d s\\
&\lec_{K,Q} \int_0^t \int_\Gamma \ee^{(t-s)\mathrm{Re}\,\lambda} \left\| \int_0^\infty \ee^{-|\lambda |^{\frac12} z_3} (x_3+z_3)^{-1} \left\| f_A (\cdot ,z_3 ,s)\right\|_{L^{\frac32}_{z'}(\RR^2)} \d z_3\right\|_{L^{\frac32}_{x_3}(K_3)}\d |\lambda | \,\d s\\
&\lec_{K} \int_0^t \int_\Gamma \ee^{(t-s)\mathrm{Re}\,\lambda}  \int_0^\infty \ee^{-|\lambda |^{\frac12} z_3}z_3^{-\frac{1}{3}}  \left\| f_A (\cdot ,z_3 ,s)\right\|_{L^{\frac32}_{z'}(\RR^2)} \d z_3\d |\lambda | \,\d s\\
&\lec_{Q,q} \int_0^t  \left\| f_A (s) \right\|_{L^q(\HH)} \int_\Gamma \ee^{(t-s)\mathrm{Re}\,\lambda}  |\lambda |^{\frac{1}{6} -\frac{1}{2q'}}\d |\lambda | \,\d s\\
&\lec \int_0^t  \left\| f_A (s) \right\|_{L^q(\HH)} (t-s)^{\frac{1}{2q'} -\frac{7}{6}}  \,\d s,
\end{split}
\]
where $q'\in (5/2,3)$ is the conjugate exponent to $q$.
Therefore, we have the required estimate
\[
\| {p}_A  \|_{L^{\frac{3}2} ((0,T); L^{\frac32}  (K))} \lec_{K,Q,q} T^{\frac{1}{2q'} -\frac{1}{6}} \| f_A \|_{L^{{q}} ((0,T); L^{q}  (Q^{***}))}
\]
for any $q\in (3/2,5/3)$ (we can choose any such $q$), and similarly for ${p}^{(k)}_A$. By replacing $f_A $ by $f^{(k)}_A - f_A$ we also obtain the convergence $\| \overline{{p}}^{(k)}_A  \|_{L^{\frac{3}2} ((0,T); L^{\frac32}  (K))}\to 0$, as required. \\

The nonlocal components are $ \bar p^{(k)}_{\nonloc,\Helm}$, $\bar p^{(k)}_{\harm,\geq 1}$, and $\bar p^{(k)}_{\harm, \leq 1}$.  
The first of these is similar to the case of $\RR^3$ in \cite{BK,BKT}, but we include details to illustrate the main approach. Denote
\[
A_T = \sup_{t\in (0,T)} \sup_k \| (u(t),\,u^{(k)}(t))\|_{\MM}^2,\qquad B_T = \sup_k \sup_{Q\in\mathcal{C} } |Q|^{-\frac23} \int_0^T \int_Q \left( |\na u^{(k)}|^2 +|\na u |^2\right) .
\]

Recalling the details of the proof of Lemma~\ref{L05}, we have
  \begin{equation}\begin{split}
| \bar p^{(k)}_{\nonloc,\Helm}(x,t)|      &\lec_\Om \sum_{l>M } \sum_{\tilde Q\in S_l } \int_{\tilde Q} \frac {| u^{(k)}(z)\otimes u^{(k)}(z)-u(z)\otimes u(z)|} {|x-z|^4}\,\d z  + \int_{Q_M}| u^{(k)}\otimes u^{(k)}-u\otimes u|\,dz
    \\
    &\lec A_T		\sum_{l>M} 2^{-2l} 	+ \int_{Q_M}| u^{(k)}\otimes u^{(k)}-u\otimes u|\,\d z
  \end{split}   \llabel{EQ114}
  \end{equation}
  for every $x\in K$, where $M\in \NN $ is such that $2^M \gg \dist (\Om,0) $, and $Q_M = \bigcup_{k\leq M} \bigcup_{\tilde Q\in S_k} \tilde Q$.  Since the series above can be estimated by $2^{-2M}$, we can choose $M$ sufficiently large so that
  \begin{equation}\begin{split}
   \int_0^t \| \bar p^{(k)}_{\nonloc,\Helm}(s)\|_{L^\I(K)} \,ds  \leq \frac{\epsilon}2 + C_\Om \int_0^t \int_{Q_M(0)}| u^{(k)}\otimes u^{(k)}-u\otimes u|\,\d z\,\d s \leq \epsilon
   \llabel{EQ115}
  \end{split}\end{equation}
  for any preassigned $\epsilon$, where we have taken large $k$ in the last inequality. Due to the uniform bounds of both $p^{(k)}_{\nonloc,\Helm}$ and $p_{\nonloc,\Helm}$ in $L^\I(0,T;L^\I(\Om ))$, due to Lemma~\ref{L05}, we obtain the required convergence $ \bar p^{(k)}_{\nonloc,\Helm} \to 0$ in $L^{3/2}(0,T; L^{3/2}(K))$.

For $\overline{p}^{(k)}_{\harm, \geq 1}$, we write
\[
|\overline{p}^{(k)}_{\harm, \geq 1}| \leq  \overline{p}^{(k)}_{\AAA1} +  \overline{p}^{(k)}_{\AAA2} +  \overline{p}^{(k)}_{\AAA3} +  |\overline{p}^{(k)}_{\BB}|,
\] 
as in Lemma~\ref{L06}. As for $\overline{p}^{(k)}_{\AAA1}$ we have for $x\in K$ that 
\EQ{
&   |\bar p^{(k)}_{\AAA1}(x,t)|
\\&\indeq\lec_\Om  \int_0^t \int_\Ga \ee^{-(t-s)\RR e \lambda}  \underbrace{  \int_1^\infty \int_{\RR^2}\frac {\ee^{-|\la|^{\fract{1}{2}}z_3 }} {(|\xi'-z'|+\xi_3+z_3)^4} (1-\chi_*)|u^{(k)}\otimes u^{(k)}-u\otimes u| \,dz'\,dz_3}_{=:I}\,d|\la|\,ds,
   \llabel{EQ116}
}
where $\xi$ is on the line segment between $x$ and $x_\Om$, which is a fixed point inside $\Om$. As in Lemma~\ref{L06}, we have $\ee^{-|\la|^{\fract{1}{2}}z_3} \lec |\la |^{-\frac12}$, which gives 
  \begin{align}
   \begin{split}
   I &\lec_\Om |\la|^{-\fract{1}{2}} \sum_{l>M} \sum_{\tilde Q \in S_l} 2^{-4l } \int_{\tilde Q } (|u|^2 + |u^{(k)}|^2) + |\la|^{-\fract{1}{2}} \int_{Q_M} (1-\chi_*(z) )\frac  { |u^{(k)}\otimes u^{(k)}-u\otimes u|(z)} {(|\xi'-z'|+\xi_3+z_3)^4}  \,dz 
   \\&\lec_\Om |\la|^{-\fract{1}{2}}A_T  \sum_{l>M}  2^{-2l } + |\la|^{-\fract{1}{2}} \| u^{(k)}\otimes u^{(k)}-u\otimes u\|_{L^1(Q_M)}
   \\&\leq |\la|^{-\fract{1}{2}} A_T {2^{-2M}} + |\la|^{-\fract{1}{2}}\| |u^{(k)}\otimes u^{(k)}-u\otimes u|\|_{L^1(Q_M)}
   ,
   \end{split}
   \llabel{EQ117}
  \end{align}
for large $M$, where we used \eqref{EQ14} in the first inequality. Inserting this into the above integral in $|\la|$ and $s$ leads to  
  \begin{equation}
   \| \bar p_{\AAA1}^{(k)}(t)\|_{L^\I(\Om )}\lesssim_{\Om } t^{\fract{1}{2}} A_T   {2^{-2M}}  + \int_0^t (t-s)^{-\fract{1}{2}} \|u^{(k)}-u\|_{L^2(Q_M)}\|(u,u^{(k)})\|_{L^2(Q_M)}\,ds.
   \llabel{EQ118}  
  \end{equation}
  Thus, applying Young's inequality for the convolution in time we obtain
\[
\| \bar p_{\AAA1}^{(k)}\|_{L^2((0,T);L^\I(\Om ))}\lesssim_{\Om , T }   A_T  {2^{-2M}} + \sup_{0<s<T}\| (u (s) , u^{(k)}(s))\|_{L^2(Q_M)}  \|u^{(k)}-u\|_{L^2( Q_M \times (0,T))}
 ,
\]  
which converges to $0$ (by first choosing large $M$ and then large $k$).  

For $\bar p_{\AAA2}^{(k)}$, we have, as in \eqref{EQ57},
 \[
  \begin{split}
  |\bar p_{\AAA2}^{(k)}(x,t)| &\lec_\Om    \int_0^t (t-s)^{-\fract{9}{10}}  \int_{\RR^2} \int_0^1  \frac{ z_3^{-\fract{1}{5}}}{(|\xi'-z'| + \xi_3 +z_3 )^{4}}  (1-\chi_* ) |u^{(k)}\otimes u^{(k)}(z) - u\otimes u (z)|   \d z_3\, \d z'  \,\d s  ,
  \end{split}
  \]
  and, as in \eqref{EQ62}, we have for every $\tilde Q \in S_k$
  \eqnb 
  \begin{split}
    &   \int_0^1\int_{\tilde Q}  |u^{(k)}\otimes u^{(k)} - u\otimes u|z_3^{-\frac{1}{5}}dz'dz_3 \lec_{a} \| u^{(k)}\otimes u^{(k)} - u\otimes u \|_{L^{\frac32}(\tilde Q)}
   \\&\indeq
    \lec  2^{\fract{2}{3}k} \big(  \| (u,\,u^{(k)}) \|_{L^2(\tilde Q)} \|(\nabla u,\,\nabla u^{(k)})\|_{L^2(\tilde Q)} +2^{-k} \|(u,\,u^{(k)})\|_{L^2(\tilde Q)}^2   \big).
  \end{split}
   \llabel{EQ119}  
\eqne
Using Young's inequality for the convolution in time, we thus obtain
 \[\| \bar p_{\AAA2}^{(k)} \|_{L^{\frac32}((0,T);L^\I(K )) } \lec_{\Om, T} \left( (A_TB_T )^{\frac12} + A_T\right)  \sum_{m >M} 2^{-\frac43 m} + C_M \| u^{(k)}\otimes u^{(k)} - u\otimes u \|_{L^{\frac32}((0,T);L^{\frac32}(Q_M))} ,  
\]
where, for the first term, we used $(|\xi'-z'| + \xi_3 +z_3)\gtrsim 2^{k}$ for $z\in \tilde Q\in S_m$, $m>M$ and  
  \begin{equation}
   \int_0^T \|(\nabla u(t),\,\nabla u^{(k)}(t))\|_{L^2(\tilde Q)}^2 \lec 2^{2k} B_T   
   ,
   \llabel{EQ158}
  \end{equation}
for $\tilde Q\in S_m$, $m>M$; for the second term, we used the first step in the inequality above. We now choose a large $M$, and then a large $k$ to obtain the required convergence.

For $\bar p_{\AAA3}^{(k)}$ we have, as in \eqref{EQ54},
\[\begin{split} | \bar p_{\AAA3}^{(k)}(x,t) | &\lec_\Om  \int_0^t \int_\Gamma \ee^{-(t-s)\re\lambda} \int_{\supp\, \na \chi_*} \frac{\ee^{-|\la |^{\frac12}z_3}}{(|\xi' -z' | + \xi_3 + z_3)^4} |u^{(k)}\otimes u^{(k)}(z,s) - u\otimes u (z,s) |\d z  \d|\lambda| \d s\\
&\lec_\Om   \int_0^t \int_\Gamma \ee^{-(t-s)\re\lambda} |\la |^{-\frac16}  \d|\lambda| \|u^{(k)}\otimes u^{(k)}(s) - u\otimes u(s) \|_{L^{\frac32} (Q^{**})}  \d s\\
&\lec  \int_0^t (t-s)^{-\frac56}  \|u^{(k)}\otimes u^{(k)}(s)- u\otimes u(s) \|_{L^{\frac32} (Q^{**})}  \d s,
\end{split}
\]
which gives $\| \bar p_{\AAA3}^{(k)} \|_{L^{\frac32}((0,T);L^{\frac32}(K )) }\to 0$ by applying Young's inequality for the convolution in time.

As for $\bar p^{(k)}_B$, recalling \eqref{EQ68}, we write
 \[
  |\bar p^{(k)}_B (x,t)| \lec \int_0^t \int_\Gamma \ee^{(t-s)\re\la } \int_{\HH} \left| q_{\la , x, x_{\Om }} (z ) (1-\chi_*) \bar F_B^{(k)}(z,s) \right| \d z \, \d |\la| \d s ,
  \]
where $\bar F_B^{(k)}$ (defined in the same way as $F_B$ (recall Step 2 of the proof of Lemma~\ref{L06}), but with $u\otimes u$ replaced by $u^{(k)}\otimes u^{(k)}-u\otimes u$), can be estimated by 
\eqnb\label{EQ120}
|\bar F_B^{(k)}( z) | \lec_Q 2^{-3M} A_T + C_M \| u^{(k)}\otimes u^{(k)}- u\otimes u \|_{L^1(Q_M)}
\eqne
for any fixed $Q\subset \HH$, and $z\in Q$, where $M>0$ large enough so that $Q\subset Q_{M/2}$. This can be obtained by an easy modification of Step 2 of the proof of Lemma~\ref{L06} by separating the integration region into $Q_M$ and the rest, as above. We then obtain
\[\begin{split}
 |\bar p^{(k)}_B (x,t)| &\lec_Q \int_0^t \int_\Gamma \ee^{(t-s)\re\la } |\la |^{-\frac12} \left( \sum_{l>L} \sum_{\tilde Q\in S_l} 2^{-2l} \|\bar F_B^{(k)} (s) \|_{L^\infty (\tilde Q)} + C_L \|\bar F_B^{(k)} (s) \|_{L^\infty (Q_L )} \right) \d |\la| \d s ,\\
& \lec \int_0^t (t-s)^{-\frac12}  \left( A_T \sum_{l>L} 2^{-4l}  + C_L A_T 2^{-3M} + C_L C_M \| u^{(k)}\otimes u^{(k)}(s)- u\otimes u(s) \|_{L^1(Q_M)} \right)  \d s ,
 \end{split}
\]
where, in the second line, we have used the estimate $\|\bar F_B^{(k)} (s) \|_{L^\infty (\tilde Q)}\lec |\tilde Q|^{-\frac23} A_T$ (from Step 2 of the proof of Lemma~\ref{L06}) in the summation, and have assumed that $M>2L$ in order to use \eqref{EQ120}. This gives 
\[
\| \bar p_{\BB}^{(k)} \|_{L^{\frac32}((0,T);L^{\infty}(K )) } \lec_{\Om,T} A_T 2^{-4L} + C_L A_T 2^{-3M} + C_L C_M \| u^{(k)}\otimes u^{(k)}- u\otimes u \|_{L^{\frac32}((0,T);L^{\frac32}(Q_M))} ,
\]
which provides the required convergence by first choosing large $L$, then $M$ and $k$.

Finally, for the remaining component of the nonlocal pressure,   $\bar p^{(k)}_{\harm, \leq 1}$, we have 
  \begin{equation}\begin{split}
\left|   \bar p^{(k)}_{\harm, \leq 1} (x,t)\right| &= \frac{1}{2\pi } \left| \int_0^t \int_\Gamma \ee^{(t-s)\la } \int_{\HH} q_{\la } (x'-z', x_3,z_3 ) \chi_* \bar F^{(k)}_B(z,s)' \d z \, \d \la \d s \right|\\
&\lec_\Om  \int_0^t \int_\Gamma \ee^{(t-s)\mathrm{Re}\,\la }  \| \bar F^{(k)}_B(s) \|_{L^\infty (Q^*)} \int_{Q^* } \left| q_{\la } (x'-z', x_3,z_3 )\right| \d z \, \d \la \d s 
   \llabel{EQ121}
  \end{split}\end{equation} 
for every $x\in K$. Thus noting that $\int_{Q^* } \left| q_{\la } (x'-z', x_3,z_3 )\right| \d z \lec_{Q^*} \int_0^\infty \ee^{-|\la |^{\frac12} z_3} z_3^{-\frac{1}2} \d z_3 \lec |\la |^{-\frac14}$ (recall \eqref{EQ23} and \eqref{EQ41}) and using \eqref{EQ120} gives
  \[
  \begin{split}
\left|   \bar p^{(k)}_{\harm, \leq 1} (x,t)\right| &\lec_\Om  \int_0^t (t-s)^{-\frac34} \left( A_T 2^{-3M } + C_M  \| u^{(k)}\otimes u^{(k)}(s)- u\otimes u(s) \|_{L^1(Q_M)}\right) \d s ,
\end{split}  \]
 which implies the convergence $\|  \bar p^{(k)}_{\harm, \leq 1} \|_{L^{\frac32}((0,T);L^\infty (K))}\to 0$, as required.
\end{proof}

\section{Existence}
\label{sec06}

In this section we apply the stability result of Section \ref{sec05} to obtain the existence of global weak solutions when $u_0\in   \MC$ and of a scaling invariant solutions when $u_0$ is scaling invariant.
We start with the generic case.

\begin{proof}[Proof of Theorem~\ref{T01}]
Let $u_0^{(k)}\in C_0^\infty (\overline{\HH})$ be divergence-free and such that $u_0^{(k)}\to u_0\in M^{2,2}_{\mathcal C}$. By the Maekawa-Miura-Prange theory, we obtain global local energy solutions $u^{(k)}$ and pressure $p^{(k)}$. 
Following \cite[Proposition 3.1]{MMP1}, these solutions satisfy the local pressure expansion tailored to $\mathcal C$ and $\mathcal C_n$.
Hence, these solutions satisfy the a priori bounds in Lemma~\ref{T02}. The asserted global solution exists due to Theorem~\ref{T03}.
\end{proof}

We now address Theorem~\ref{T06}. As our foundation, we use the scaling invariant solutions of \cite{BT2}. These belong to the energy perturbed class which we now recall.

\begin{Definition}[EP-solutions to \eqref{EQ01}]\label{D04} The vector field $u$ defined on $\HH\times (0,\I)$ is an \emph{energy perturbed solution to \eqref{EQ01}}, abbreviated `{EP-solution},' with divergence free initial data $u_0\in L^{3,\I}(\HH)$ if 
\begin{equation}\llabel{EQ151} 
\int_0^\I  \big( (u,\partial_s f)-(\nabla u,\nabla f)- (u\cdot\nabla u ,f)  \big)  \,ds =0,
\end{equation}
 for all $f\in  \{ f\in C_0^\I(\HH\times (0,\infty)):\nabla\cdot f = 0 \}$, we have
\[
u-Su_0 \in L^\infty(0,T;L^2(\HH))\cap L^2(0,T;H^1(\Omega)),
\]
for any $T>0$, and 
  \begin{equation}
   \lim_{t\to 0^+} \| u(t)-Su_0(t) \|_{L^2(\HH)} = 0,
   \llabel{EQ13}  
  \end{equation}
where $Su_0(t)\in L^\I(0,\I;L^{3,\I}(\HH))$ is the solution to the time-dependent Stokes system with initial data $u_0$ and zero boundary value.
\end{Definition}

The main theorem of \cite{BT2} is the following.
  
\begin{Theorem}[\cite{BT2}] \label{T05}
If $u_0\in  L^{3,\I}(\HH)$  is SS (resp.~$\lambda$-DSS) and such that $u_0|_{x_3=0} = 0$, then there exists an EP-solution $u$ on $\HH\times [0,\infty)$ with initial data $u_0$,  which is SS (resp.~$\lambda$-DSS). Moreover, $u|_{x_3=0}(x,t) =0$ for $t>0$. 
\end{Theorem}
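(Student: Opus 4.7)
The plan is to follow the energy-perturbation framework pioneered by Jia--\v{S}ver\'ak \cite{JiaSverak} and adapted to the half-space via the Stokes kernel of Desch--Hieber--Pr\"uss. First, I would decompose $u = v + Su_0$, so that $v|_{t=0} = 0$ and $v$ satisfies the perturbed system
\[
\p_t v - \De v + v\cdot \na v + v\cdot \na Su_0 + Su_0 \cdot \na v + \na q = -Su_0 \cdot \na Su_0, \qquad \na \cdot v = 0,
\]
with $v|_{x_3=0}=0$. The motivation is that $u_0\in L^{3,\infty}(\HH)$ together with the scaling invariance of the Stokes system puts $Su_0$ in the scaling-critical class $L^\infty(0,\infty;L^{3,\infty}(\HH))$, while the natural space for $v$ becomes the (subcritical) energy class $L^\infty_t L^2 \cap L^2_t \dot H^1$. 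The forcing $-Su_0 \cdot \na Su_0$ lies in $L^\infty_t L^{3/2,\infty}$, which is dual to the natural space for $v$ via Sobolev embedding.

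The existence of $v$ on a finite time interval $[0,T]$ would be obtained from a mollified linear problem and a Leray-Schauder argument. Energy estimates against $v$ itself give
\[
\frac{d}{dt}\|v\|_{L^2}^2 + 2\|\na v\|_{L^2}^2 \lec \int |Su_0|\,|v|\,|\na v| + \int |Su_0|^2 |\na v| ,
\]
and the troublesome drift term $\int (v\cdot \na Su_0)\cdot v$ is handled via an integration by parts or a Hardy-type inequality, using the vanishing trace $v|_{x_3=0}=0$ to control the boundary contribution. Together with weak continuity in time and the strong convergence $v(t)\to 0$ in $L^2(\HH)$ as $t\to 0^+$, this produces an EP-solution on $[0,T]$ with initial data $u_0$.

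To promote this local existence to a \emph{globally defined} DSS solution, I would use a Poincar\'e-type compactness argument. For $\la$-DSS data the scaling map $u(x,t)\mapsto \la u(\la x, \la^2 t)$ preserves both the equation and $u_0$; thus a $\la$-DSS solution corresponds to a fixed point of the time-$\la^2$ evolution map $\Phi: u(\cdot,1)\mapsto u(\cdot,\la^2)$ on a suitable set $\mathcal K$ of divergence-free perturbations with the DSS symmetry. By the uniform-in-$T$ energy bounds on $v$, the Aubin--Lions lemma, and the compactness inherited from the DSS structure (which reduces the problem effectively to a compact set in physical space because DSS fields are determined by their values on the annulus $\{1\le |x|\le\la\}$), the map $\Phi$ is continuous and compact on $\mathcal K$. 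An a~priori bound shows $\Phi$ maps a closed convex ball in $\mathcal K$ into itself, and Schauder's theorem provides a fixed point. The SS case follows either from the DSS construction by sending $\la\to 1^+$ and extracting a limit (using the uniform estimates), or directly by solving the self-similar profile equation in Leray coordinates $y=x/\sqrt{t}$.

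The main obstacle is step (ii): handling the drift and cross terms $v\cdot\na Su_0,\ Su_0 \cdot \na v$ and the forcing $Su_0\cdot \na Su_0$ on the half-space, where $Su_0$ is only in a weak Lebesgue scaling-critical space and the Leray projector is non-local with a harmonic correction (see the discussion surrounding $p_{\harm}$ in Section~\ref{sec03}). This requires careful use of the explicit half-space Stokes kernel of \cite{DHP} and of trace/Hardy inequalities respecting the Dirichlet boundary condition---this analysis constitutes the bulk of the argument in \cite{BT2} (and its smooth-data precursor \cite{KT-SSHS}). Once these technical ingredients are in place, the DSS fixed-point map inherits enough continuity and compactness for the Schauder step to close.
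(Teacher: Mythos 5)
The statement is cited from \cite{BT2} and not proved in the present paper, so the comparison is against the actual argument of \cite{BT2} and its precursor \cite{KT-SSHS}. Your decomposition $u = v + Su_0$ with $v$ in the perturbed energy class matches Definition~\ref{D04}, and the rough outline (approximation, energy estimate, fixed point for DSS) is on the right track. However, there is a real gap in the energy estimate. Integrating the drift by parts and bounding naively gives
\[
\int |Su_0|\,|v|\,|\nabla v| \lec \|Su_0\|_{L^{3,\infty}}\,\|v\|_{L^{6,2}}\,\|\nabla v\|_{L^2} \lec \|Su_0\|_{L^{3,\infty}}\,\|\nabla v\|_{L^2}^2,
\]
which cannot be absorbed by the dissipation unless $\|u_0\|_{L^{3,\infty}}$ is small; the theorem is for arbitrary data. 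Hardy inequalities and the vanishing trace control boundary contributions, not the size of this term. The missing ingredient is the pointwise decay of the Stokes evolution of SS/DSS $L^{3,\infty}$ data: the self-similar profile $U_0(y) := \sqrt{t}\,Su_0(x,t)$, $y = x/\sqrt t$, is smooth, bounded, and satisfies $|U_0(y)| \lec (1+|y|)^{-1}$. One splits the drift integral across $\{|y|<R\}$ and $\{|y|>R\}$; the far part is absorbable since $\|U_0\|_{L^\infty(\{|y|>R\})} \lec R^{-1}$, and the near part closes by a Gronwall or interpolation argument on a bounded region. Establishing these pointwise bounds for the half-space Stokes semigroup via the Desch--Hieber--Pr\"uss kernel is precisely the new content of \cite{KT-SSHS,BT2}, and without them your a~priori estimate does not close.

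The DSS fixed-point step is also vague and misattributes compactness. The claim that ``DSS fields are determined by their values on $\{1\le|x|\le\lambda\}$'' holds for a DSS field at a fixed time slice, not for the space-time unknown; it does not, by itself, give compactness for the evolution map $\Phi: u(\cdot,1)\mapsto u(\cdot,\lambda^2)$. The route of \cite{BT1,BT2} is to pass to similarity variables $(y,s)=(x/\sqrt t,\log t)$, where a $\lambda$-DSS solution becomes periodic in $s$ with period $2\log\lambda$, and to run Leray--Schauder on the time-periodic perturbed Leray system for the profile $W(y,s)$; compactness comes from spatial decay of $W$ in the energy class together with Aubin--Lions on the bounded $s$-interval, not from the DSS symmetry per se. Your evolution-map formulation is closer in spirit to Chae--Wolf \cite{CW} and, if pursued, still requires those decay estimates to obtain a compact, continuous map on an appropriate function space.
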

  
  Here (and below) we understand the boundary condition $u_0|_{x_3=0} = 0$ in the sense that for any $x\in \partial \HH\setminus \{ 0 \}$, there exists a neighborhood of $x$ so that $u_0=0$ in this neighborhood. Our proof of Theorem~\ref{T06} is by stability using the solutions of Theorem~\ref{T05} as approximations. 
To connect these with a scaling invariant datum in $\MC$ we need the following lemma.

\begin{Lemma}\label{L12}
Assume $u_0\in \MC$ is $\lambda$-DSS and such that $u_0|_{x_3=0} = 0$. Then there exists a sequence $\{u_0^{(k)}\}\subset L^{3,\I}(\HH) $ so that $u_0^{(k)}|_{x_3=0}=0$, all $u_0^{(k)}$ are $\la$-DSS and $u_0^{(k)} \to u_0$ in $\MM$. If $u_0$ is self-similar, then $u_0^{(k)}$ can also be taken to be self-similar.
\end{Lemma}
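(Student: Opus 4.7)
My plan is to exploit the $\lambda$-DSS symmetry to reduce the approximation problem to a fixed bounded annulus, and then recover global approximants by DSS-extension, with a Bogovskii-type correction to restore divergence-freeness. Fix a smooth radial ``DSS partition of unity'', namely $\chi\in C_c^\infty((\lambda^{-1/2},\lambda^{3/2});[0,1])$ with $\sum_{j\in\mathbb{Z}}\chi(\lambda^j r)=1$ for $r>0$. Iterating the $\lambda$-DSS identity $u_0(x)=\lambda^j u_0(\lambda^j x)$ gives the representation
\[
u_0(x)=\sum_{j\in\mathbb{Z}}\chi(\lambda^j |x|)\,u_0(x)=\sum_{j\in\mathbb{Z}}\lambda^j [\chi(|\cdot|)u_0](\lambda^j x),
\]
so $u_0$ is exactly the $\lambda$-DSS extension of the localized field $\chi u_0$, which is supported in a fixed bounded annulus.

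Since $u_0\in\MC$, I choose $\phi^{(m)}\in C_{c,\sigma}^\infty(\HH)$ with $\phi^{(m)}\to u_0$ in $\MM$, set $f^{(m)}:=\chi(|\cdot|)\phi^{(m)}\in C_c^\infty(\HH)$ (smooth, compactly supported in a fixed bounded annulus away from $\partial\HH$), and define
\[
\tilde u_0^{(m)}(x) := \sum_{j\in\mathbb{Z}}\lambda^j f^{(m)}(\lambda^j x).
\]
Because $f^{(m)}$ is supported in a fixed annulus, this sum has $O(1)$ nonzero terms at each $x\in\HH\setminus\{0\}$; hence $\tilde u_0^{(m)}$ is smooth and well-defined. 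It is $\lambda$-DSS by direct computation, and the support geometry of $f^{(m)}$ forces $\tilde u_0^{(m)}$ to vanish on a conical neighborhood of $\partial\HH\setminus\{0\}$ (namely $\{x_3<c_m|x|\}$ for some $c_m>0$ depending on the support of $\phi^{(m)}$), which gives the required boundary condition in the neighborhood sense. The DSS scaling yields the pointwise bound $|\tilde u_0^{(m)}(x)|\lesssim \|f^{(m)}\|_\infty/|x|$, placing $\tilde u_0^{(m)}$ in $L^{3,\infty}(\HH)$. Convergence $\tilde u_0^{(m)}\to u_0$ in $\MM$ follows from the scaling-based bound $\|v\|_{\MM}\lesssim \|v\|_{L^2(A^*)}$ valid for any $\lambda$-DSS $v$ and a fixed bounded annulus $A^*$ (this holds because every $Q\in\mathcal C$ has side-length $\geq 2$, so an appropriate scaling sends $Q$ into $A^*$), together with the $L^2$-convergence $f^{(m)}\to \chi u_0$ on $\HH$.

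The remaining issue is that $\tilde u_0^{(m)}$ is not divergence-free: $\nabla\cdot f^{(m)}=\nabla\chi\cdot\phi^{(m)}\neq 0$. However, since $u_0$ is divergence-free, one has $\nabla\cdot\tilde u_0^{(m)}\to 0$ in $L^2(A^*)$, and $\nabla\cdot\tilde u_0^{(m)}$ is supported in the DSS orbit of $\mathrm{supp}(\nabla\chi)\cap\mathrm{supp}(\phi^{(m)})$, which is away from $\partial\HH$. I correct this with a $\lambda$-DSS-equivariant Bogovskii construction on the compact fundamental cell: solve $\nabla\cdot w^{(m)}=\nabla\cdot\tilde u_0^{(m)}$ for a $\lambda$-DSS field $w^{(m)}$ supported in the DSS orbit of a compact subset of $\HH$ strictly away from $\partial\HH$, with the classical Bogovskii estimate $\|w^{(m)}\|_{L^2(A^*)}\lesssim \|\nabla\cdot\tilde u_0^{(m)}\|_{L^2(A^*)}\to 0$; applying the scaling-based $\MM$-bound again, this gives $w^{(m)}\to 0$ in $\MM$. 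Setting $u_0^{(m)}:=\tilde u_0^{(m)}-w^{(m)}$ then provides the desired sequence: $\lambda$-DSS, divergence-free, in $L^{3,\infty}(\HH)$, vanishing in a neighborhood of $\partial\HH\setminus\{0\}$, and converging to $u_0$ in $\MM$. For continuously self-similar $u_0$, the same construction applies, with the partition of unity in scale replaced by its continuous analog; equivalently, one performs the construction on the smaller quotient $\overline{S^2_+}$, producing SS approximants.

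The main obstacle is the Bogovskii correction step, both because it must be carried out $\lambda$-DSS-equivariantly and because the correction must not destroy the conical vanishing near $\partial\HH$. Both difficulties are resolved by working on the compact fundamental cell (topologically $S^1\times\overline{S^2_+}$ after quotienting by the scaling $x\mapsto\lambda x$), on which classical Bogovskii theory applies and the compatibility of divergence with DSS-scaling makes the resulting $w^{(m)}$ automatically $\lambda$-DSS. The localization of $\nabla\chi\cdot\phi^{(m)}$ away from $\partial\HH$ allows the Bogovskii problem to be posed on a subdomain that does not touch the boundary of $\HH$, so $w^{(m)}$ inherits the required conical vanishing.
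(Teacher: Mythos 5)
The paper itself does not prove Lemma~\ref{L12}: it states that ``the proof is similar to the proof of [BT5, Lemma~4.1]'' and omits the details, so a line-by-line comparison is impossible. Your strategy---write $u_0$ as the $\lambda$-DSS extension of its radial-cutoff localization $\chi u_0$, replace $\chi u_0$ by $\chi\phi^{(m)}$ with $\phi^{(m)}\in C_{c,\sigma}^\infty(\HH)$ approximating $u_0$ in $\MM$ (available from the definition of $\MC$), DSS-extend to get $\tilde u_0^{(m)}$, and correct the divergence by a DSS Bogovskii term---is the natural route and is in the same spirit as the cited lemma, with the conical vanishing near $\partial\HH$ being the new half-space ingredient, which you handle correctly. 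The scaling bounds you invoke (the pointwise $O(1/|x|)$ estimate giving $L^{3,\infty}$, the reduction $\|v\|_{\MM}\lesssim\|v\|_{L^2(A^*)}$ for DSS $v$, the $O(1)$ overlapping terms) all check out, and you correctly identify that divergence-freeness, while not written into the statement, is needed for the downstream application via Definition~\ref{D04} and Theorem~\ref{T05}.

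The step you flag as ``the main obstacle'' is treated too lightly. The DSS-equivariant Bogovskii problem $\nabla\cdot w^{(m)}=\nabla\cdot\tilde u_0^{(m)}$, with $w^{(m)}$ DSS, supported in the DSS cone, and vanishing on the lateral boundary, is a \emph{periodic} Bogovskii problem; its solvability condition is $\int_{\mathrm{cell}}\nabla\cdot\tilde u_0^{(m)}\,dx=0$ on the fundamental annulus $\{1\leq|x|<\lambda\}\cap\HH$, and this integral equals $(\lambda-1)$ times the flux of $\tilde u_0^{(m)}$ through the half-sphere $\{|x|=1\}\cap\HH$. It is not a priori clear that this vanishes for the approximant (it vanishes for $u_0$ itself, but that alone only gives convergence of the flux to $0$, not exact vanishing). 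Fortunately it does vanish, but for a reason that must be stated: the flux equals $\sum_j \lambda^{-j}\chi(\lambda^j)\int_{\{|y|=\lambda^j\}\cap\HH}\phi^{(m)}\cdot\hat y\,dS$, using that $\chi$ is \emph{radial}, and each half-sphere flux of $\phi^{(m)}$ vanishes by the divergence theorem since $\phi^{(m)}$ is divergence-free and compactly supported away from $\partial\HH$. Without this observation the Bogovskii step does not close, and you cannot instead subtract off a correction field, since (by the same divergence-theorem computation) no divergence-free DSS velocity field vanishing near $\partial\HH\setminus\{0\}$ has nonzero half-sphere flux. A second, more minor, omission: to conclude $u_0^{(m)}\in L^{3,\infty}$ you need $w^{(m)}$ to be bounded on the fundamental cell, not merely $L^2$; this does follow from higher-order Bogovskii estimates because $\nabla\cdot\tilde u_0^{(m)}=\sum_j\lambda^{2j}(\nabla\chi\cdot\phi^{(m)})(\lambda^j\cdot)$ is smooth, but the point deserves a sentence.
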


The proof of this is similar to the proof of \cite[Lemma~4.1]{BT5} and the details are omitted.

\begin{proof}[Proof of Theorem~\ref{T06}]
Concerning the solutions of Theorem~\ref{T05}, it is an easy exercise to check they are local energy solutions. Indeed, the component $S$ is smooth and decays in the sense that it belongs to $\mathcal L^2_{\uloc}$ where we adopt the notation of \cite{MMP1}. The $L^2$ part also enjoys this decay. This is a sufficient condition for $u$ to have the local pressure expansion which follows by adapting \cite[Proof of Proposition 3.1]{MMP1} to $\mathcal C$ and $\mathcal C_n$.  Based on this, the solutions from Theorem~\ref{T05} clearly satisfy the conditions of Theorem~\ref{T03}. 

Given $u_0$, from Lemma~\ref{L12} we obtain a sequence $u_0^{(k)}$ which converges to $u_0$ in $\MM$. By Theorem~\ref{T05} we obtain for each $k$ a global EP-solution $u^{(k)}$. These solutions and data satisfy the assumptions of Theorem~\ref{T03}. Hence, there exists a local energy solution $u$ for initial data $u_0$ which is a limit of $u^{(k)}$ in the sense given in the proof of Theorem~\ref{T03}. This convergence is sufficient to guarantee $u$ is DSS.  The argument is identical when $u$ is self-similar. 
\end{proof}

\section{Eventual Regularity}
\label{sec07}

The goal of this section is to prove Theorem~\ref{T04}, which asserts
regularity in a parabolic region with an arbitrarily small leading coefficient.

Assume that $0<q\leq 1$. Also, suppose that 
  \begin{equation}
   u_0\in \mathring M_{{\mathcal C}}^{2,q}
   ,
   \llabel{EQ122}
  \end{equation}
and assume that $u$ is a solution as constructed in the previous sections.
Denote
  \begin{equation}
  \alpha_n (t) =\sup_{s\in[0,t]}\| u (s) \|_{M_{\mathcal{C}_n}^{2,q}}^2
  \quad \text{ and } \quad 
  \beta_n(t) = \sup_{Q\in \mathcal{C}_n} \frac{1}{|Q|^{\fract{q}{3}}} \int_0^t \int_{Q} |\na u |^2
   .
   \llabel{EQ123}
  \end{equation}
In this section, we mostly omit indicating the dependence of $\alpha$ and $\beta$ on~$n$.

In the proof of the main theorem, we shall use the following version of the Gronwall lemma.

\cole
\begin{Lemma}\label{L13}
Suppose that 
$f\colon [0,T_0]\to [0,\infty)$ is a  nonnegative increasing 
continuous
function,
which satisfies
  \begin{equation}
    f(t) 
     \leq a f(0) 
     + a \left(1+\frac{t}{b}\right)^\gamma
      \left( 
        \Vert f \Vert_{L^\infty (0,t)}^{p_1} 
        +         \Vert f \Vert_{L^\infty (0,t)}^{p_2} 
        + c     \Vert f \Vert_{L^\infty (0,t)}
     \right),
    \llabel{EQ124}   
  \end{equation}
where  $p_1, p_2>1$ and
$\gamma,b,c\geq0$.
For every $K\geq1$, there
exists $\epsilon>0$, 
depending on $a$, $b$, $\gamma$, $p_1$, $p_2$, and $K$,
such that
if
  \begin{equation}
   f(0) + c
   \leq \epsilon
   ,
   \llabel{EQ125}
  \end{equation}
then
  \begin{equation}
   f(t) \leq 2 a f(0)
   ,
   \llabel{EQ126}
  \end{equation}
for $t\leq \min\{K b,T_0\}   $.
\end{Lemma}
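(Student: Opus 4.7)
The plan is a standard barrier argument that exploits the monotonicity of $f$. Since $f$ is continuous and nondecreasing, we have $\|f\|_{L^\infty(0,t)} = f(t)$, and for $t\in [0,\min\{Kb,T_0\}]$ we may bound $(1+t/b)^\gamma \leq (1+K)^\gamma$. Hence the hypothesis reduces to the pointwise inequality
  \begin{equation*}
    f(t) \leq a f(0) + a(1+K)^\gamma \bigl( f(t)^{p_1} + f(t)^{p_2} + c\,f(t)\bigr)
    \qquad \text{for } t \in [0,\min\{Kb,T_0\}].
  \end{equation*}

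First, assume $f(0)>0$ and set $M=2a f(0)$. Define
$T^\star = \sup\{\, t \in [0,\min\{Kb,T_0\}]\colon f(s)\leq M \text{ for all } s\in[0,t]\,\}$,
which is positive by continuity (since $f(0)\leq M$ because $a\geq 1$ can be assumed without loss of generality, or we simply take $M=\max\{2af(0),f(0)\}$). Suppose, for contradiction, $T^\star < \min\{Kb,T_0\}$; then continuity yields $f(T^\star)=M$. Substituting into the displayed inequality at $t=T^\star$ and dividing by $af(0)$ yields
  \begin{equation*}
    1 \leq (1+K)^\gamma \bigl( (2a)^{p_1} f(0)^{p_1-1} + (2a)^{p_2} f(0)^{p_2-1} + 2c\bigr).
  \end{equation*}
Since $p_1,p_2>1$, the right-hand side tends to $0$ as $f(0)+c \to 0$; hence, choosing $\epsilon>0$ small enough (depending only on $a,\gamma,K,p_1,p_2$) so that $f(0)+c\leq\epsilon$ forces the right-hand side to be $<1$, we obtain a contradiction. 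Therefore $T^\star = \min\{Kb,T_0\}$ and $f(t)\leq 2af(0)$ throughout.

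If $f(0)=0$, the same barrier argument applied with an arbitrary threshold $M=\delta>0$ in place of $2af(0)$ shows that $f(t)\leq \delta$ on $[0,\min\{Kb,T_0\}]$, because the reduced inequality at a would-be contact point gives $1\leq a(1+K)^\gamma(\delta^{p_1-1}+\delta^{p_2-1}+c)$, which fails for $\delta$ small and $c\leq\epsilon$. Letting $\delta\to 0^+$ gives $f\equiv 0$, which is consistent with the conclusion $f(t)\leq 2af(0)=0$. The only subtle point is to absorb the linear term $cf(t)$ into the barrier: this is why $c$, in addition to $f(0)$, must be controlled by the smallness parameter $\epsilon$. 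Apart from this, the argument is purely mechanical, and I do not anticipate any serious obstacle.
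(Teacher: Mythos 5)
Your proof is correct and follows essentially the same barrier argument that the paper invokes (the paper's own proof is just the one-line remark that one compares $f$ with $2af(0)$); you have simply filled in the details. The only minor slip is arithmetical: after substituting $f(T^\star)=2af(0)$ and dividing by $af(0)$, the last term inside the parentheses should read $2ac$, not $2c$, but since the argument takes both $f(0)$ and $c$ small this does not affect the conclusion.
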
 
\colb

\begin{proof}[Proof of Lemma~\ref{L13}]
The proof is obtained by the barrier argument, comparing the solution
$f(t)$ with $2 a f(0)$.
\end{proof}

Another important ingredients in the proof of Theorem~\ref{T04} is the following estimate on the pressure term in the energy inequality.

\cole
\begin{Lemma}
\label{L14}
Let $q\in (0,1)$, $n\in {\mathbb N}$. If $u_0$ and $u$ are as above, then
  \begin{align}
   \begin{split}
    \frac{1}{|Q|^{\frac{q}{3}}}
    \int_0^t \int p \,u \cdot \na \phi_Q 
    \lec_q
    \bigl(1+ t|Q|^{-\frac23}\bigr)^{C} 
    \Bigl(
    \alpha_n(0)^{\frac12} \alpha_n(t)^{\frac12}
    +
       (\alpha_n(t)+\beta_n(t))^{\frac32}
    +
       (\alpha_n(t)+\beta_n(t))^{3}
   \Bigr)
   ,
   \end{split}
   \label{EQ127}
  \end{align}
for all $Q\in {\mathcal C}_n$.
\end{Lemma}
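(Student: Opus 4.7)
The plan is to mirror the proof of Lemma~\ref{L09}, splitting $p$ via the local pressure expansion~\eqref{EQ07} and bounding each piece by the estimates~\eqref{EQ37}. For $Q\in\mathcal{C}_n$ we have $\un=\on=n$ and $|Q|\approx 2^{3n}$, so each bound in~\eqref{EQ37} reduces to a monomial in $|Q|^{1/3}$, $t$, $\alpha_n$ and $\beta_n$. The novelty compared with Lemma~\ref{L09} is that we want an $L^\infty$-in-time estimate on $\alpha_n$; this is achieved by the elementary inequality $\|\alpha_n\|_{L^p(0,t)}\le t^{1/p}\alpha_n(t)$, after which the resulting powers of $t$ are repackaged via the identity
\[
|Q|^{A}t^{B}=\bigl(t|Q|^{-2/3}\bigr)^{B}\,|Q|^{A+2B/3},
\]
so that any term with $3A+2B\le 0$ is controlled by $(1+t|Q|^{-2/3})^{C}$ using $|Q|\ge 1$.

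For the linear pressures, the computation in~\eqref{EQ84} gives
\[
\int_0^t\!\int (p_{\li,\loc}+p_{\li,\nonloc})\,u\cdot\nabla\phi_Q\lec |Q|^{q/3-1/6}\,t^{1/4}\,\alpha_n(0)^{1/2}\alpha_n(t)^{1/2},
\]
which after dividing by $|Q|^{q/3}$ rewrites as $(t|Q|^{-2/3})^{1/4}\alpha_n(0)^{1/2}\alpha_n(t)^{1/2}$ and contributes the first term of~\eqref{EQ127}. For $p_{\loc,\Helm}$ and $p_{\nonloc,\Helm}$ I would argue as in~\eqref{EQ85} and~\eqref{EQ88} using~\eqref{EQ156}; after normalisation by $|Q|^{q/3}$, the worst resulting monomials are $|Q|^{2q/3-4/3}t\alpha_n^{3}$ and $|Q|^{q/6-5/6}t\alpha_n^{3/2}$, with $3A+2B=2q-2$ and $q/2-1/2$ respectively, both non-positive precisely when $q\le 1$.

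For $p_{\loc,\harm}$, $p_{\harm,\ge 1}$ and $p_{\harm,\le 1}$ I would repeat the H\"older and interpolation arguments from Lemma~\ref{L09} (the displays~\eqref{EQ86}, the $p_{\harm,\ge 1}$ block, and~\eqref{EQ90}), substituting $\|\alpha_n\|_{L^p(0,t)}\le t^{1/p}\alpha_n(t)$ in every $L^p$-norm that appears. Each resulting summand takes the form $|Q|^{q/3+A}t^{B}\alpha_n(t)^{a}\beta_n(t)^{b}$ with $a+b\in\{3/2,3\}$, so the elementary bound $\alpha_n^{a}\beta_n^{b}\le(\alpha_n+\beta_n)^{a+b}$ supplies the second and third terms of~\eqref{EQ127}. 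The residual $\epsilon|Q|^{q/3}\beta_n(t)$ appearing in several of these estimates is absorbed by the gradient term on the left of the local energy inequality when this lemma is applied in Section~\ref{sec07}.

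The main obstacle is the bookkeeping for $p_{\loc,\harm}$, which after applying~\eqref{EQ82} and~\eqref{EQ37} produces many mixed $(\alpha,\beta,t,|Q|)$-monomials; for each, the inequality $3A+2B\le 0$ has to be verified directly. This is exactly the step where the hypothesis $q<1$ is essential: for larger $q$, elementary monomials such as the Helmholtz term above already have $3A+2B>0$, so the factor $(1+t|Q|^{-2/3})^{C}$ cannot absorb them. No summand produces a power of $\alpha_n+\beta_n$ exceeding $3$, matching the cubic worst case in~\eqref{EQ127}.
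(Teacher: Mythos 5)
Your proposal follows the paper's own strategy almost exactly: start from the intermediate displays of Lemma~\ref{L09}, replace every $\|\alpha_n\|_{L^p(0,t)}$ by $t^{1/p}\alpha_n(t)$ (using monotonicity of $\alpha_n$), rewrite in terms of $\tilde t = t|Q|^{-2/3}$, and verify that the residual power of $|Q|$ is nonpositive. Your identity $|Q|^{A}t^{B}=(t|Q|^{-2/3})^{B}|Q|^{A+2B/3}$ is precisely the paper's substitution, and your exponent verifications for the linear and Helmholtz parts (e.g.\ $3A+2B=2q-2$ and $q/2-1/2$) agree with the paper. You also correctly locate where $q<1$ enters.

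There is, however, one concrete step in your plan that fails as stated: the treatment of $p_{\harm,\geq 1}$. You propose to ``repeat the $p_{\harm,\geq 1}$ block'' of Lemma~\ref{L09}, which applies Lemma~\ref{L06} with $\gamma=1/2$, $\delta=1/2$, $r=3/2$. After substituting $\|\alpha\|_{L^p}\le t^{1/p}\alpha$ into that block, the dominant summand (after dividing by $|Q|^{q/3}$) is of order $|Q|^{q/6-11/18}(1+t)^{1/2}t^{1/2}\alpha\beta^{1/2}$. For $t\gtrsim 1$ this scales like $|Q|^{q/6-11/18}t$, and writing $t=\tilde t|Q|^{2/3}$ gives $|Q|^{q/6+1/18}\tilde t$, a \emph{positive} power of $|Q|$ for every $q>0$; your criterion $3A+2B\le 0$ is violated. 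The paper avoids this by not reusing Lemma~\ref{L09}'s parameters: it re-applies Lemma~\ref{L06} directly with $r=3/2$, $\delta=2q/3$, and crucially $\gamma<(1-q)/4$, which is admissible only because $q<1$ and because Lemma~\ref{L06} permits any $\gamma\in(0,1)$ (see~\eqref{EQ133}--\eqref{EQ134}). You should single out this re-optimization of $\gamma$ explicitly; otherwise the $p_{\harm,\geq 1}$ term does not fit under the $(1+\tilde t)^{C}$ envelope. Your remark about the residual $\epsilon\beta_n(t)$ from the $p_{\loc,\Helm}$ bound is a fair acknowledgment of a shared imprecision; the paper's own proof carries the same $\epsilon\beta(t)$ through~\eqref{EQ130} and absorbs it only at the level of the energy inequality in Section~\ref{sec07}.
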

\colb

\begin{proof}[Proof of Lemma~\ref{L14}]
We set
  \begin{equation}
   \tt =\frac{t}{|Q|^{2/3}}
   .
   \llabel{EQ128}
  \end{equation}
From \eqref{EQ84}, we have
  \begin{align}
  \begin{split}
        \frac{1}{|Q|^{\frac{q}{3}}}
    \int_0^t \int (p_{\li,\loc} +p_{\li,\nonloc})u\cdot \na \phi_Q 
&    \lec
      |Q|^{-\frac16}
      t^{\fract{3}{16}} 
      \| u_0 \|_{M_{\cn}^{2,q}} 
     \| \alpha \|_{L^8(0,t)}^{\frac{1}{2}}
     \\&
      \lec
    |Q|^{-\frac1{24}}      \tilde t^{\fract{1}{4}} 
      \| u_0 \|_{M_{\cn}^{2,q}}
      \alpha(t)^{\frac{1}{2}}
   .
   \end{split}
   \llabel{EQ129}
  \end{align}
By \eqref{EQ85}, we have
  \begin{equation}\begin{split}
    \frac{1}{|Q|^{\frac{q}{3}}}
  \int_0^t \int p_{\loc,\Helm} \,u\cdot \na \phi_Q 
   &\lec
   C_\epsilon |Q|^{\frac{2q}{3}-\fract{4}{3}}  \| \alpha \|_{L^3 (0,t)}^3  
        +\epsilon \beta(t) 
        + |Q|^{\fract{q}{6}-\fract{5}{6}} \| \alpha \|_{L^{\frac32} (0,t)}^{\frac32}  
   \\&
   \lec
   C_\epsilon |Q|^{\frac{2q}{3}-\fract{2}{3}} \tt  \alpha(t)^3   
        +\epsilon \beta(t) 
        + |Q|^{\fract{q}{6}-\fract{1}{6}} 
           \tt
          \alpha(t)^{\frac32}  
   .
  \end{split}
   \llabel{EQ130}
  \end{equation}
Next, by \eqref{EQ86}, 
  \begin{equation}\begin{split}
  &
   \frac{1}{|Q|^{\frac{q}3}}
   \int_0^t \int p_{\loc,\harm} \,u\cdot \na \phi_Q 
   \\&\indeq
   \lec   |Q|^{\frac{q-2}{6}}
     \left(  
             t^{\frac{253}{1632}}\| 
             \alpha \|_{L^{8}(0,t)}^{\frac{25}{68}}  
             \beta(t)^{\frac9{68}} 
           + |Q|^{-\fract{3}{34}} 
             t^{\frac{13}{48}}
             \| \alpha \|_{L^{8}(0,t)}^{\frac12}   \right) 
   \\&\indeqtimes
    \left(
     t^{\frac{1}{816}}
      \| \alpha \|_{L^{8}(0,t)}^{\frac{13}{34}} \beta(t)^{\frac{21}{34}}
      + 
       |Q|^{-\frac{4}{51}}
       t^{\frac{5}{48}}
       \| \alpha \|_{L^8 (0,t)}^{\frac12}   
       \beta(t)^{\frac12}
      +
        |Q|^{-\frac{21}{51}} 
        t^{\frac{13}{24}}
        \| \alpha \|_{L^{8}(0,t)}  
      \right) 
      \\&\indeq
      \leq  |Q|^{\frac{q-2}{6}} (\alpha(t)+\beta (t))^{\frac32}
     \left(  
             t^{\frac{41}{205}}
           + |Q|^{-\fract{3}{34}} 
             t^{\frac{1}{3}}
              \right) 
    \left(
     t^{\frac{5}{102}}
           + 
       |Q|^{-\frac{4}{51}}
       t^{\frac{1}{6}}
            +
        |Q|^{-\frac{21}{51}} 
        t^{\frac{2}{3}}
            \right) 
      \\&\indeq
      =  |Q|^{\frac{q-2}{6}} (\alpha(t)+\beta (t))^{\frac32}
     \left(  
          |Q|^{\frac2{15}}   {\tilde t}^{\frac{41}{205}}
           + |Q|^{\fract{41}{306}} 
             {\tilde t}^{\frac{1}{3}}
              \right) 
   |Q|^{\frac{5}{153}}   \left(
   {\tilde t}^{\frac{5}{102}}
           + 
{\tilde       t}^{\frac{1}{6}}
            +
          {\tilde     t}^{\frac{2}{3}}
            \right) 
      \\&\indeq
      \lec  |Q|^{\frac{q-1}{6}}(1+\tilde t) (\alpha(t)+\beta (t))^{\frac32},
  \end{split}
\colb
   \llabel{EQ131}
  \end{equation}
  where we used that $\alpha (t)$ is nondecreasing in the second inequality, and the fact that $\frac2{15}+\frac5{153}<\frac16$ in the last one.
  
From \eqref{EQ88}, we have
  \begin{equation}\begin{split}
    & 
   \frac{1}{|Q|^{\frac{q}3}}
    \int_0^t \int p_{\nonloc,\Helm} \,u\cdot \na \phi_Q 
      \lec
      |Q|^{\frac{q}6-\fract{5}{6}} 
      \Vert \alpha \Vert_{L^{\frac32}(0,t)}^{\frac32}  
     \lec
      |Q|^{\frac{q}6-\fract{1}{6}} 
      \tt
      \alpha(t)^{\frac32}  
  .
   \end{split}
   \llabel{EQ132}
  \end{equation}
For $p_{\harm, \geq 1}$ we take $r=3/2$ in Lemma~\ref{L06} to obtain
\eqnb\label{EQ133}\begin{split}
  \| p_{\harm, \geq 1} \|_{L_t^{\frac{3}{2}}L_x^\infty (Q\times(0,t))} &\lec_{\gamma, \delta } |Q|^{\frac{q-3}3} (1+t)^{\gamma} \left( |Q|^{\frac{4\delta}9 }\| \alpha \|_{L^{\frac{3(1-\delta )}{2-3\delta }}(0,t)}^{1-\delta }  \beta(t)^{\delta } +(1+t^{\frac12} |Q|^{-\frac13}  ) \| \alpha \|_{L^{\frac32} (0,t)}  \right)\\
  &\lec |Q|^{\frac{q-3}3} (1+t)^{\gamma} \left( |Q|^{\frac{4\delta}9 }t^{\frac{2-3\delta }3 } \alpha(t)^{1-\delta }  \beta(t)^{\delta } +(1+t^{\frac12} |Q|^{-\frac13}  ) t^{\frac23} \alpha(t)  \right)\\
  &\lec |Q|^{\frac{q-3}3} (\alpha(t) + \beta (t) )(1+t)^{\gamma} t^{\frac23}  \left( 1+ |Q|^{\frac{4\delta}9 }t^{-\delta }  +t^{\frac12} |Q|^{-\frac13}    \right)
  \end{split}
  \eqne
for any $q\in (0,3)$, $\gamma \in (0,1)$ and $\delta \in (0, \min \{ 2/3 , 3q/2 \})$. This gives 
  \begin{align}
   \begin{split}
     &  
    \frac{1}{|Q|^{\frac{q}{3}}}
    \int_0^t \int_{Q} p_{\harm, \geq 1} \,u \cdot \na \phi_Q 
    \lec
    |Q|^{-\frac{q}{3}-\frac{1}{3}}
     \| p_{\harm, \geq 1} \|_{L_t^{\frac{3}{2}}L_x^\infty (\Om\times(0,t))}
     \| u \|_{L_t^{3}L_x^1 (\Om\times(0,T))}
    \\&\indeq
    \lec
    |Q|^{-\frac{q}{3}-\frac{1}{3}}
      |Q|^{\frac{q-3}3} (\alpha(t) + \beta (t) )(1+t)^{\gamma} t^{\frac23}  \left( 1+ |Q|^{\frac{4\delta}9 }t^{-\delta }  +t^{\frac12} |Q|^{-\frac13}    \right)
    |Q|^{\frac12+\frac{q}{6}}
   \Vert \alpha\Vert_{L^{\frac32}(0,t)}^{\frac12}
    \\&\indeq
      \lec
    |Q|^{\frac{q}{6}-\frac{5}{6}}
      (\alpha(t) + \beta (t) )^{\frac32}(1+t)^{\gamma} t \left( 1+ |Q|^{\frac{4\delta}9 }t^{-\delta }  +t^{\frac12} |Q|^{-\frac13}    \right)
           \\&\indeq
    =
    |Q|^{\frac{q-1+4\gamma}{6}}
      (\alpha(t) + \beta (t) )^{\frac32}(|Q|^{-\frac23}+\tilde t)^{\gamma} {\tilde t} \left( 1+ |Q|^{-\frac{2\delta}9 }{\tilde t}^{-\delta }  +{\tilde t}^{\frac12}   \right)
           \\&\indeq
    \lec_q  (\alpha(t) + \beta (t) )^{\frac32} (1+\tilde{t})^2,
   \end{split}
   \label{EQ134}
  \end{align}
where, in the last step, we have chosen $\gamma\in (0,1/2)$ sufficiently small so that $q-1+4\gamma <0$ and $\delta \coloneqq 2q/3$.
Finally, we use \eqref{EQ90} to obtain
  \begin{equation}\begin{split}
  &
  \frac{1}{|Q|^{\frac{q}{3}}}
  \int_0^t \int_{Q} p_{\harm,\leq 1} \,u\cdot \na \phi_Q 
  \lec  
    |Q|^{\frac{q}6-\frac{13}{12}} t^{\frac{19}{16}}  \| \alpha \|_{L^8 (0,t)}^{\frac32}
   \lec
    |Q|^{\frac{q-1}6} 
    \tt^{\frac{11}{8}}  
    \alpha(t)^{\frac32}
   .
  \end{split}
   \llabel{EQ135}
  \end{equation}
Summing the above inequalities, we obtain \eqref{EQ127}, as required.  
\end{proof}

Before the proof, we also need the following fact.

\cole
\begin{Lemma}
\label{L15}
Let $u$ be as above. Then
$\alpha+\beta$ is a continuous function of $t$.
\end{Lemma}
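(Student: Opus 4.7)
The plan is to exploit monotonicity of $\alpha$ and $\beta$ in $t$---both are nondecreasing by construction---together with the decay property of the initial data $u_0 \in \mathring{M}^{2,q}_{\mathcal{C}}$, in order to reduce the suprema defining $\alpha_n$ and $\beta_n$ (taken over the infinite family $\mathcal{C}_n$) to \emph{finite} maxima modulo an arbitrarily small error, and then verify continuity of each remaining term. For any $\varepsilon > 0$ and any $T > 0$, I would first apply Theorem~\ref{T02} at level $n' \geq n$: since $\|u_0\|_{M^{2,q}_{\mathcal{C}_{n'}}} \to 0$ as $n' \to \infty$ by \eqref{EQ06}, and $T_{n'} \to \infty$ by \eqref{EQ10}, the a priori bound \eqref{EQ11} yields
\[
\sup_{s \in [0,T]}\sup_{Q \in \mathcal{C}_{n'}} \frac{1}{|Q|^{q/3}} \int_Q |u(s)|^2
+ \sup_{Q \in \mathcal{C}_{n'}} \frac{1}{|Q|^{q/3}} \int_0^T\!\!\int_Q |\nabla u|^2 \leq \varepsilon
\]
for $n' = n'(\varepsilon, T)$ sufficiently large. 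Setting $g_Q(s) = |Q|^{-q/3}\|u(s)\|_{L^2(Q)}^2$, $h_Q(t) = \sup_{s \in [0,t]} g_Q(s)$, and $F_Q(t) = |Q|^{-q/3}\int_0^t\int_Q |\nabla u|^2$, the lemma will then reduce to continuity of the finite suprema $\max_{Q \in \mathcal{C}_n \setminus \mathcal{C}_{n'}} h_Q$ and $\max_{Q \in \mathcal{C}_n \setminus \mathcal{C}_{n'}} F_Q$, followed by letting $\varepsilon \to 0$.

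Continuity of each $F_Q$ is immediate, since $|\nabla u|^2 \in L^1_{\loc}(\HH \times (0,T))$ makes $F_Q$ absolutely continuous in $t$. Continuity of each $h_Q$, as a running supremum, I would deduce from lower semicontinuity and right-continuity of $g_Q$. Lower semicontinuity of $g_Q$ follows from Definition~\ref{D02}(5)---which provides weak $L^2(Q)$-continuity of $u(t)$ for the bounded cube $Q$---combined with weak lsc of the $L^2$-norm; this alone already delivers left-continuity of $h_Q$, since $h_Q(t_0^-) = \sup_{s < t_0} g_Q(s) \geq \liminf_{s \to t_0^-} g_Q(s) \geq g_Q(t_0)$. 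For right-continuity of $g_Q$ at $t_0$, I would invoke the local energy inequality \eqref{EQ155} with test functions of the form $\phi(x,t) = \phi_Q(x)\psi_\eta(t)$, where $\phi_Q$ is a spatial cutoff equal to $1$ on $Q$ and supported in $Q^*$, and $\psi_\eta$ is a standard smooth approximation of $\mathbf{1}_{[t_0, s]}$; passing to the limit $\eta \to 0$ yields
\[
\int |u(s)|^2 \phi_Q \leq \int |u(t_0)|^2 \phi_Q + \int_{t_0}^s \!\!\int\!\bigl(|u|^2 \Delta \phi_Q + (|u|^2 + 2p)(u\cdot\nabla\phi_Q)\bigr),
\]
whose right-most integral vanishes as $s \to t_0^+$ by the $L^1$-integrability of $(|u|^2 + |p|)|u||\nabla\phi_Q|$ in $Q^* \times (0,T)$, with each part of $p$ controlled via the pressure estimates \eqref{EQ37} of Section~\ref{sec03}. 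Combined with the lsc this gives $\lim_{s\to t_0^+} g_Q(s) = g_Q(t_0)$, hence right-continuity of $h_Q$ because $g_Q(t_0^+) \leq h_Q(t_0)$.

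The hard part will be establishing the displayed upper bound at \emph{every} $t_0 \in [0,T]$ rather than merely a.e.~$t_0$: the Dirac-limit step in the LEI (with $\psi_\eta'$ concentrating at the endpoints) only produces the inequality at Lebesgue points of $t \mapsto \int |u(t)|^2 \phi_Q$, which is not directly a full-measure statement since $g_Q$ is a priori only lsc. I plan to handle this by picking a sequence $t_{0,k} \downarrow t_0$ of ``good'' times and passing to the limit in the inequality written for each $t_{0,k}$, using Definition~\ref{D02}(5) (weak $L^2_{\loc}$-continuity) together with weak lsc to recover $\int |u(t_0)|^2 \phi_Q$ as the controlling quantity on the right-hand side, which---after the telescoping with the lsc lower bound---yields the desired right-continuity of $g_Q$ at every $t_0$. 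Combining with the finite-cube reduction and sending $\varepsilon \to 0$ finishes the proof.
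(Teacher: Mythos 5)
Your overall structure matches the paper's (very terse) proof exactly: first reduce to finitely many cubes via Theorem~\ref{T02} (using that $T_{n'}\to\infty$ and $\|u_0\|_{M^{2,q}_{\mathcal{C}_{n'}}}\to 0$ for $u_0\in\mathring M^{2,q}_{\mathcal C}$), then argue continuity of each cube-wise term. The paper only states that each $\sup_{s\in[0,t]}\int_Q|u|^2\phi$ is continuous in $t$ and defers to \cite{BK}; you are filling in that gap, and the ingredients you reach for (weak $L^2_{\loc}$-continuity from Definition~\ref{D02}(5) to get lower semicontinuity, hence left-continuity of $h_Q$, and the local energy inequality to get the upper bound, hence right-continuity) are the correct ones.

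However, the patch you propose for the a.e.~issue has a directional error. You suggest taking good times $t_{0,k}\downarrow t_0$ and passing to the limit; but weak $L^2$-continuity together with weak lower semicontinuity of the $L^2$-norm only gives $\liminf_{k}\int|u(t_{0,k})|^2\phi_Q \geq \int|u(t_0)|^2\phi_Q$, which bounds the right-hand side from \emph{below}, not above. Consequently the limit does not ``recover $\int|u(t_0)|^2\phi_Q$ as the controlling quantity,'' and the argument does not yield right-continuity of $g_Q$ at a bad $t_0$ (which may indeed fail there). What does work, and is all that is needed, is weaker: the claim $\limsup_{s\to t_0^+}g_Q(s)\leq h_Q(t_0)$. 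Take good times $\tau_k\uparrow t_0$ from \emph{below}. For such $\tau_k$ and any $s>t_0$ the LEI gives $g_Q(s)\leq g_Q(\tau_k)+|E(\tau_k,s)|$, where $|E(\tau,s)|\leq\int_\tau^s W$ for a fixed $W\in L^1$. Since $\tau_k<t_0$, one has $g_Q(\tau_k)\leq h_Q(t_0)$ trivially, so $g_Q(s)\leq h_Q(t_0)+|E(\tau_k,s)|$; sending $k\to\infty$ and then $s\to t_0^+$ gives $\limsup_{s\to t_0^+}g_Q(s)\leq h_Q(t_0)$, which combined with your left-continuity argument gives continuity of $h_Q$ without needing the LEI to hold starting from $t_0$ itself. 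With this correction the proof is complete and matches the paper's approach.
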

\colb

\begin{proof}[Proof of Lemma~\ref{L15}]
The proof is similar to \cite[Proof of Lemma~3.2]{BK}.
We only sketch the  continuity of $\alpha$ 
on $[0,T]$, where $T>0$ is fixed,
as the argument for
$\beta$ is simpler.
First, for every $Q\in{\mathcal C}$,
$\sup_{s\in[0,t]}\int_{Q}|u|^2\phi$ is a continuous function of $t$.
The rest follows by
  \begin{equation}
   \lim_{n\to \infty}
    \sup_{Q\in{\mathcal C}; |Q|\geq 2^{n}}
    \frac{1}{|Q|^{\frac{q}{3}}} \sup \int_{Q}|u(0,T)|^2\phi
    = 0
   \llabel{EQ136}
  \end{equation}
by finding $n\in {\mathbb N}$ such that
$T_n$ in Theorem~\ref{T02} satisfies $T_n\geq T$
and by applying Theorem~\ref{T02} with $n\to\infty$.
\end{proof}

We are now ready to prove the main theorem on eventual regularity.

\begin{proof}[Proof of Theorem~\ref{T04}]
From Remark~\ref{R01}, recall that
  \begin{equation}
     \frac 1 {|Q|^{\frac23}}\int_0^{t}\int_Q |u|^3\,dx\,dt 
       \lec
         t^{\frac{1}{4}} 
         |Q|^{\fract{q}{2}-\frac23}
         \alpha(t)^{\fract{3}{4}}
         \beta(t)^{\fract{3}{4}}
       +
       t |Q|^{\frac{q}{2}-\frac76}
         \alpha(t)^{\fract{3}{2}}
      .
   \label{EQ137}
  \end{equation}
for all $Q\in {\mathcal C}_n$, from where
  \begin{align}
   \begin{split}
   \frac{1}{|Q|^{\frac{q}{3}}}
    \int_0^t \int_{Q} |u|^2 u \cdot \na \phi_Q 
    \lec
         t^{\frac{1}{4}} 
         |Q|^{\fract{q-2}{6}}
         \alpha(t)^{\fract{3}{4}}
         \beta(t)^{\fract{3}{4}}
       +
        t |Q|^{\frac{q-5}{6}}
        \alpha(t)^{\fract{3}{2}}
   .
   \end{split}
   \label{EQ138}
  \end{align}
Note that the both terms on the right-hand side are dominated by the right-hand
side of \eqref{EQ127}. Thus, 
applying \eqref{EQ127} and \eqref{EQ138} in the
energy inequality \eqref{EQ95}, we get
  \begin{align}
   \begin{split}
   \alpha(t) + \beta(t)
   &\les
   \alpha(0)
    +
    \bigl(1+ t|Q|^{-\frac23}\bigr)^{C} 
    \Bigl(
    \alpha(0)^{\frac12} \alpha(t)^{\frac12}
    +
       (\alpha(t)+\beta(t))^{\frac32}
    +
       (\alpha(t)+\beta(t))^{3}
   \Bigr)
   \end{split}
   \llabel{EQ139}
  \end{align}
since also the term $   t |Q|^{-\frac23} \alpha(t)$ is dominated by the right-hand side of \eqref{EQ127}.
Note that $|Q|^{-\frac23}\les 2^{-2n}$. 
With $\epsilon>0$ to be determined, find $n\in {\mathbb N}$
sufficiently large such that
  \begin{equation}
   \alpha(0)
   \leq \epsilon
   .
   \llabel{EQ140}
  \end{equation}
(Recall that $\alpha(t)=\alpha_n(t)$.)
Now, we apply Lemma~\ref{L13} with $f(t)=\alpha(t)+\beta(t)$,
which is continuous by Lemma~\ref{L15}.
Thus we obtain
  \begin{equation}
   \alpha(t) + \beta(t)
   \les
   \epsilon
   \comma 
   0\leq t\leq 2^{n}
   ,
   \label{EQ141}
  \end{equation}
and then \eqref{EQ137} implies
  \begin{equation}
   \frac{
    1
       }{
    2^{2n}
   }
   \iint_{Q\times(0,2^{n})}
    |u|^{3}
    \les C \epsilon
   ,
   \label{EQ142}
  \end{equation}
where
  \begin{equation}
   Q= \left[
         -2^{n},2^{n}
       \right]^{3}
     \cap \HH
    .
   \llabel{EQ143}
  \end{equation}
To apply the boundary regularity criterion, we also need to obtain an
estimate for $2^{-2n}\iint_{Q\times(0,2^{n})} |p|^{3/2}$.
Due to integrability issues at time 0 of the estimates in \eqref{EQ37}, we 
bound the pressure on the space-time cylinder
$Q\times(1,2^{n})$ rather than $Q\times(0,2^{n})$. 
For the linear part of the pressure, $p_{\li,\loc}+ p_{\li,\nonloc} $, we use H\"older's inequality and the first two inequalities in \eqref{EQ37} to get 
 \eqnb \begin{split}
   \frac{1}{|Q|^{\frac{2}{3}}}
   \| p_{\li,\loc}+ p_{\li,\nonloc} \|_{L^{\frac32} ((1,t); L^{\frac{3}{2}} (Q))}^{\frac32}
   \lec
   |Q|^{\frac{3q-10}{12}} 
   \Vert u_0\Vert_{M^{2,q}}^{\frac32}
   \lec \epsilon
   .
   \end{split}
   \label{EQ144}
\eqne
Next, by Lemma~\ref{L03} and \eqref{EQ142}, we get
  \begin{align}
   \begin{split}
   \frac{1}{|Q|^{\frac{2}{3}}}
   \| p_{\loc,\Helm} \|_{L^{\frac32} ((0,t); L^{\frac{3}{2}} (Q))}^{\frac32}
   \lec  \frac{1}{|Q|^{\frac{2}{3}}}
   \| u \|_{L^{3} ((0,t); L^{3} (Q^{***}))}^{3} \lec  \epsilon
   .
   \end{split}
   \llabel{EQ145}
  \end{align}
For the local harmonic part of the pressure, we have, with $t=2^{2n}$,
  \begin{align}
   \begin{split}
   &
   \frac{1}{|Q|^{\frac{2}{3}}}
   \| p_{\loc,\harm} -\theta\|_{L^{\frac32} ((0,t); L^{\frac{3}{2}} (Q))}^{\frac32}
    \lec
      |Q|^{-\frac{28}{51}}
   \| p_{\loc,\harm} -\theta\|_{L^{\frac32} ((0,t); L^{\frac{17}{10}} (Q))}^{\frac32}
   \\&\indeq
    \lec
    |Q|^{-\frac{28}{51}}   |Q|^{\frac{q}2} \left(
       \| \alpha \|_{L^{\fract{39}{5}}(0,t)}^{\fract{13}{34}} \beta(t)^{\fract{21}{34}}+ |Q|^{-\frac4{51}}\| \alpha \|_{L^3 (0,t)}^{\fract12}   \beta(t)^{\fract12}+|Q|^{-\frac{21}{51}} \| \alpha \|_{L^{\fract32}(0,t)}   \right)^{\frac32}
    ,
    \\ &\indeq
    \lec 
    |Q|^{\frac{q}2-\frac{28}{51}}   \left(
       t^{\frac{5}{102}} + |Q|^{-\frac4{51}} t^{\frac{1}6}+|Q|^{-\frac{21}{51}} t^{\frac23}  \right)^{\frac32} (\alpha (t) + \beta (t))^{\frac32}
    ,
    \\ &\indeq
    \sim
    |Q|^{\frac{q-1}2}   (\alpha (t) + \beta (t))^{\frac32}
    ,
    \\ &\indeq
    \lec \epsilon
    ,
   \end{split}
   \llabel{EQ146}
  \end{align}
where we used Lemma~\ref{L04} in the second inequality,  the fact $t=2^{2n}\sim  |Q|^{\frac23}$ in the fourth inequality, and \eqref{EQ141} in the last one.

For the nonlocal Helmholtz pressure, we use Lemma~\ref{L05} to obtain
 \begin{align}
   \begin{split}
   &
   \frac{1}{|Q|^{\frac{2}{3}}}
   \| p_{\nonloc,\Helm} \|_{L^{\frac32} ((0,t); L^{\frac{3}{2}} (Q))}^{\frac32}
    \lec |Q|^{-\frac23} |Q|^{\frac{q-1}2} \| \alpha \|_{L^{\frac32}(0,t)}^{\frac32}\lec  |Q|^{\frac{q-1}2} (\alpha (t) )^{\frac32} \lec \epsilon
   \end{split}
   \llabel{EQ147}
  \end{align}
by \eqref{EQ141}.
Moreover, \eqref{EQ133} gives
 \begin{align}
   \begin{split}   
   \frac{1}{|Q|^{\frac{2}{3}}}
   \| p_{\harm ,\geq 1 } \|_{L^{\frac32} ((0,t); L^{\frac{3}{2}} (Q))}^{\frac32}
    &\lec |Q|^{\frac13} |Q|^{\frac{q-3}2} (\alpha(t) + \beta (t) )^{\frac32}(1+t)^{\frac{3\gamma}{2}} t  \left( 1+ |Q|^{\frac{4\delta}9 }t^{-\delta }  +t^{\frac12} |Q|^{-\frac13}    \right)^{\frac32}
   \\&
   \lec |Q|^{\frac{q}2-\frac76}   (\alpha(t) + \beta (t) )^{\frac32}|Q|^{\gamma} |Q|^{\frac23}   \left( 1+ |Q|^{-\frac{2\delta}9 }    \right)^{\frac32}
   \\&
   \lec |Q|^{\frac{q-1}2 + \gamma }   (\alpha(t) + \beta (t) )^{\frac32}
   \\&
   \lec \epsilon,
   \end{split}    
   \llabel{EQ148}
  \end{align}
where we used \eqref{EQ141} and the choice of $\gamma <(1-q)/4$, as in \eqref{EQ134} above, in the last inequality. 

Finally, Lemma~\ref{L07} gives
  \begin{align}
   \begin{split}
   \frac{1}{|Q|^{\frac{2}{3}}}
   \| p_{\harm, \leq 1} \|_{L^{\frac32} ((0,t); L^{\frac{3}{2}} (Q))}^{\frac32}\lec |Q|^{\frac13} |Q|^{\frac{3q-15}8} \alpha (t)^{\frac32} \int_0^t s^{\frac9{16}} \d s \lec  |Q|^{\frac{3q-4}8} \alpha (t)^{\frac32} \lec \epsilon
   \end{split}
   \label{EQ149}
  \end{align}
using $t\lec |Q|^{\frac23}$ in the second inequality and \eqref{EQ141} in the last.

Summing the estimates \eqref{EQ144}--\eqref{EQ149}, we conclude that there exists
$\theta=\theta(t)$ such that
  \begin{align}
   \begin{split}
   &
   \frac{1}{2^{2n }} 
   \| p -\theta\|_{L^{\frac32} ((1,2^{2n}); L^{\frac{3}{2}} (Q))}^{\frac32}
   \lec \epsilon
   .
   \end{split}
   \label{EQ150}
  \end{align}
Let $\epsilon_0\in(0,1)$ be arbitrary.
Applying
the boundary regularity criterion due to Seregin~et~al~\cite[Theorem 1.1]{SSS}
to the inequalities
\eqref{EQ142} and \eqref{EQ150}, with $\epsilon$ sufficiently small,
we obtain that
$(u,p)$ is regular in the set
$(-(1-\epsilon_0)2^{n},(1-\epsilon_0)2^{n})^{3}\times (1-(1-\epsilon_0)^2) 2^{2n},2^{2n})$, and the theorem follows.
Note that the criterion \cite{SSS} requires
the integrability condition
$\na p, D^{2}u \in L_{\loc}^{\frac32}( (0,\infty); L^{\frac98}_{\loc } (\overline{\HH}))$,
which is the content of Lemma~\ref{L16} stated next.
\end{proof}

\cole
\begin{Lemma}[Integrability of $\na p$ and $D^{2}u$]
\label{L16}
For any local energy solution $(u,p)$, we have
  \begin{equation}
   \na p, D^{2}u \in L^{\frac32}( (t_0,T); L^{\frac98}_{\loc } (\overline{\HH}))
   ,
   \label{EQ83}
  \end{equation}
 for every $T>0$ and $t_0\in (0,T)$.
\end{Lemma}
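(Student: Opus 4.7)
The plan is to combine the pressure expansion of Definition~\ref{D02}(6) with Stokes maximal regularity on the half-space. The critical observation is that every local energy solution satisfies
\begin{equation*}
u \cdot \nabla u \in L^{\frac{3}{2}}((t_0, T); L^{\frac{9}{8}}_{\loc}(\overline{\HH})).
\end{equation*}
Indeed, from $u\in L^\infty_t L^2_{\loc}\cap L^2_t H^1_{\loc}$ (part~(1) of Definition~\ref{D02}), the Sobolev embedding $H^1\hookrightarrow L^6$ gives $u\in L^2_t L^6_{\loc}$, and interpolation with $L^\infty_tL^2_{\loc}$ produces $u\in L^6_t L^{\frac{18}{7}}_{\loc}$. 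H\"older's inequality combined with $\nabla u\in L^2_{t,x,\loc}$ then yields the displayed regularity. These exponents $3/2$ in time and $9/8$ in space are precisely those appearing in the Seregin boundary regularity criterion~\cite{SSS}.

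Next I fix a bounded open set $\Om\Subset\overline{\HH}$ and bound each of the seven pieces of $\nabla p$ in the expansion \eqref{EQ07} adapted to $\Om$. The linear gradients $\nabla p_{\li,\loc}$ and $\nabla p_{\li,\nonloc}$ follow by differentiating the explicit formula \eqref{EQ20} in $x$ and repeating the proofs of Lemmas~\ref{L01}--\ref{L02} using \eqref{EQ23} with $m=1$ in place of $m=0$; the resulting time decay of order $t^{-5/4}$ is integrable in $L^{3/2}((t_0,T))$ for every $t_0>0$. The full local pressure gradient $\nabla p_{\loc}=\nabla p_{\loc,\Helm}+\nabla p_{\loc,\harm}$ is controlled in $L^{3/2}_tL^{9/8}(\HH)$ by applying half-space Stokes maximal regularity~\cite{DHP,SvW,GS} to the system \eqref{EQ18}, whose source, via $\nabla\cdot u=0$, expands as $-\chi_{**}u\cdot\nabla u-(u\cdot\nabla\chi_{**})u\in L^{3/2}_tL^{9/8}(\HH)$; this simultaneously gives $\partial_t u_{\loc}, D^2u_{\loc}\in L^{3/2}_tL^{9/8}(\HH)$. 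Finally, for the three nonlocal pieces $\nabla p_{\nonloc,\Helm},\nabla p_{\harm,\leq 1},\nabla p_{\harm,\geq 1}$, differentiating in $x$ inside \eqref{EQ34} and \eqref{EQ35} and repeating the pointwise kernel analyses of Lemmas~\ref{L05}--\ref{L07} with \eqref{EQ23} at $m=1$ yields local $L^{9/8}(\Om)$ bounds at each time that are $L^{3/2}$-integrable on $(t_0,T)$.

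Summing these contributions gives $\nabla p\in L^{3/2}((t_0,T);L^{9/8}_{\loc}(\overline{\HH}))$. To obtain the analogous estimate for $D^2u$, I multiply $u$ by a cutoff $\zeta\in C_c^\infty(\overline{\HH})$ with $\zeta\equiv 1$ on $\Om$ and apply half-space Stokes maximal regularity to the Stokes equation satisfied by the solenoidal correction $v=\zeta u-\mathcal{B}(u\cdot\nabla\zeta)$, where $\mathcal{B}$ denotes the Bogovskii operator. All terms on the right-hand side of this system---namely $\zeta(u\cdot\nabla u)$, the commutator $[\Delta,\zeta]u=2\nabla\zeta\cdot\nabla u+(\Delta\zeta)u$, the correction $u\,\partial_t\zeta$, and the lower-order terms coming from $\mathcal{B}$---lie in $L^{3/2}((t_0,T);L^{9/8}(\HH))$ by the local energy regularity of $u$ together with the bound on $\nabla p$ established above, yielding $D^2u\in L^{3/2}((t_0,T);L^{9/8}(\Om))$.

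The principal technical obstacle is the gradient analysis of the nonlocal harmonic pressure $\nabla p_{\harm,\leq 1}$ near $\partial\HH$: one extra $x$-derivative applied to $q_\lambda$ produces, after integration over $|z'|\lesssim 2^{\overline{n}}$, a factor $(x_3+z_3)^{-1}$ in place of the $(x_3+z_3)^{-\frac14}$ from the proof of Lemma~\ref{L07}, which is not pointwise integrable in $z_3$ at $x_3=0$. This is overcome by taking an $L^{9/8}$ norm in $x_3$ rather than $L^\infty$, which absorbs the resulting logarithmic singularity and is precisely what is required for the conclusion.
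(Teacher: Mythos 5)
Your treatment of $\nabla p$ mirrors the paper's: the nonlocal pieces built from $q_{\lambda,x,x_\Om}$ only improve under an extra $x$-derivative, while $p_{\li,\loc}$, $p_{\loc,\Helm}$, $p_{\loc,\harm}$ and $p_{\harm,\leq 1}$ are reworked in the $L^{9/8}$ framework. Two small points here. First, $p_{\harm,\leq 1}$ is actually a \emph{local} piece — it is built with the unmodified kernel $q_\lambda$ and the cutoff $\chi_*$, unlike $p_{\harm,\geq 1}$ — which is precisely why it needs the special $x_3$-treatment you identify; grouping it with the nonlocal pieces obscures this. Second, after passing to $L^{9/8}_{x_3}$ the $(x_3+z_3)^{-1}$ factor produces not a logarithmic but a power singularity $z_3^{-1/9}$, which pairs with $e^{-|\lambda|^{1/2}z_3}$ to give $|\lambda|^{-4/9}$; the paper records $\|\nabla p_{\harm,\leq 1}(t)\|_{L^{9/8}(\Om)}\lesssim_\Om \alpha(T)\,t^{7/36}$. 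These are cosmetic and do not affect correctness.

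For $D^2 u$ you take a genuinely different and more involved route, and there is a gap. The paper simply observes that once $\nabla p$ is controlled, $\phi\nabla p$ can be fed as a \emph{known source} into the scalar \emph{heat} equation satisfied by $\phi u$, namely $\partial_t(\phi u) - \Delta(\phi u) = -\phi\,u\cdot\nabla u - \phi\nabla p + u(\partial_t\phi - \Delta\phi) - 2\nabla\phi\cdot\nabla u$, and then applies maximal parabolic regularity; no solenoidal correction is needed. Your proposal instead insists on the Stokes operator and inserts a Bogovskii correction $v = \zeta u - \mathcal{B}(u\cdot\nabla\zeta)$. But the source for $v$ then contains $[\partial_t - \Delta]\mathcal{B}(u\cdot\nabla\zeta)$, and while the $\Delta\mathcal{B}$ part is harmless since $\mathcal{B}$ gains a derivative, the term $\partial_t\mathcal{B}(u\cdot\nabla\zeta) = \mathcal{B}(\partial_t u\cdot\nabla\zeta)$ is \emph{not} lower-order: it forces you to interpret $\partial_t u = \nabla\cdot(\nabla u - u\otimes u) - \nabla p$ distributionally, establish $\partial_t u\in L^{3/2}_t W^{-1,9/8}_{\loc}$ using the pressure bound just derived, and then invoke $\mathcal{B}:W^{-1,9/8}\to L^{9/8}$. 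You list this among ``lower-order terms coming from $\mathcal{B}$'' and do not supply the argument, which is where your proof as written is incomplete. The heat-equation route sidesteps the issue entirely because it does not require the localized field to be divergence free.
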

\colb

\begin{proof}[Proof of Lemma~\ref{L16}]
Let $\Omega \subset \HH$ be a bounded open set. We review the pressure estimates \eqref{EQ37} and observe that the gradients of each nonlocal part of the pressure, i.e., $\na p_{\li, \loc}$, $\na p_{\nonloc, \Helm }$, $\na p_{\harm, \geq 1}$, belong to $L^{\frac32} (t_0,T); L^{\frac98} (\Om ))$, as the derivative falling onto the kernel $q_\lambda$ only improves the estimate (since we obtain faster decay on the pointwise bound on $q_\lambda$ \eqref{EQ23}). For the local parts, we obtain the required regularity by reexamining their estimates from \eqref{EQ37}, as follows.

For $p_{\li,\loc }$, we argue as in \eqref{EQ39}--\eqref{EQ42}, with the $L^2$ norms replaced by $L^{\frac98}$,
to obtain
\[
\begin{split}
\| \na p_{\li, \loc } (t) \|_{L^{\frac98}(\Om )} &\lec \int_\Ga \ee^{t\re\la } \int_0^\infty \ee^{-|\la |^{\fract{1}{2}}z_3} \| \chi u_0 (\cdot , z_3) \|_{L^{\frac98}_{z'}} \left( \int_0^{c(\Om )} (x_3+z_3)^{-\frac98} \d x_3 \right)^{\frac89}  \d z_3 \d |\la |\\
&\lec_{\Om } \| \chi u_0  \|_{L^2} \int_\Ga \ee^{t\re\la } \left( \int_0^\infty \ee^{-2|\la |^{\fract{1}{2}}z_3} z_3^{-\frac29}  \d z_3\right)^{\frac12}  \d |\la |\\
&= \| \chi u_0  \|_{L^2} \int_\Ga \ee^{t\re\la } |\la |^{-\frac7{36}}  \d |\la |\\
&\lec \| \chi u_0  \|_{L^2} t^{-\frac{29}{36}}
,
\end{split}
\]
where we used the Cauchy-Schwarz inequality in the second inequality.

For $p_{\loc,\harm }$, we argue as in \eqref{EQ40} to obtain
\[
\| \na p_{\loc, \harm }\|_{L^{\frac32}((0,T);L^{\frac98} (\HH))}  \lec  \| \mathbb{P} \na \cdot (\chi_{**} u\otimes u ) \|_{L^{\fract32} ((0,T);L^{\fract{9}{8}}(\HH))} \lec_\Om \alpha(T) + \beta (T)
,
\]
where the dependence on $\Om$ is via the cutoff function $\chi_{**}$.

The estimate on $\na p_{\loc, \Helm}$ follows by direct calculation
and Calder\'on-Zygmund estimates, and the estimate on $\na p_{\harm ,
\leq 1}$ follows in the same way as $\na p_{\li , \loc }$ above, by
observing that $\| \chi_* F_B (t) \|_{L^2} \lec_\Om \alpha(T)$ for
every $t\in (0,T)$ (which can be obtained in the same way as Step 2 of
the proof of Lemma~\ref{L06}) and by integration in time. This 
gives $ \| \na p_{\harm, \leq 1} (t) \|_{L^{\frac98}(\Om )} \lec_\Om
\alpha(T) t^{\frac7{36}}$, and so  $\na p_{\harm, \leq 1} \in
L^{\frac32} ((t_0,T); L^{\frac98} (\Om ))$, as required.

In order to get the integrability assertion for $D^2u$, let
$\phi\in C_0^{\infty}(\overline{\HH}\times (0,\infty))$ be arbitrary.
Then we have
  \begin{equation}
    \partial_{t}(u \phi)
     - \Delta (u \phi)
     = f
    ,
   \label{EQ04}
  \end{equation}
where $f=-\phi u\cdot \nabla u - \phi\nabla p+ u (\partial_{t} \phi - \Delta \phi) - 2 \nabla \phi\cdot \nabla u$.
By the first part of the proof, we have 
$\phi\nabla p\in L^{\frac32}( ([0,\infty); L^{\frac98} (\overline{\HH}))$.
Also, $\phi u\cdot \nabla u\in L^{\frac32}( (0,\infty); L^{\frac98} (\overline{\HH}))$ since
$u\in L_{\loc}^{6}( (0,\infty); L^{\frac{18}{7}} (\overline{\HH}))$
and \\
$\nabla u\in L_{\loc}^{2}( (0,\infty); L^{2} (\overline{\HH}))$.
Using also the local square integrability of $u$ and $\nabla u$, we get
$f\in L^{\frac32}( [0,\infty); L^{\frac98} (\overline{\HH}))$.
Applying the maximal parabolic regularity (see section D.5 in \cite{RRS}, for example) to the equation \eqref{EQ04} with zero initial data,
we obtain $ D^2 (u\phi) \in L^{\frac32}( [0,\infty); L^{\frac98} (\overline{\HH}))$.
By local square integrability of $u,\nabla u$,
this implies
$ \phi D^2 u \in L^{\frac32}( [0,\infty); L^{\frac98} (\overline{\HH}))$,
and since $\phi$ was an arbitrary test function supported in 
$\overline{\HH}\times (0,\infty)$,
the proof is complete.
\end{proof}

\colb
\section*{Acknowledgments}
ZB and WO were supported in part by the Simons Foundation,
while IK was supported in part by the
NSF grant DMS-1907992.

\colb

\end{document}